\def\Bshorttitle{STEADY BOLTZMANN AND NAVIER-STOKES}
\def\Bauthor{K. AOKI, F. GOLSE AND S. KOSUGE}
\def\BS #1{\pbm[1]{#1}{#1}
\vskip .7cm \centerline{\bf #1}\par \vskip .3cm}
\def\BSs #1#2{\pbm[1]{#1}{#1}
\vskip .7cm \centerline{\bf #1}\par \vskip 5pt\noindent{\bf #2}\par\vskip .3cm}
\def\bs #1{\pbm[2]{#1}{#1}
\vskip .7cm\noindent{\bf #1}\par \vskip .3cm}
\def\pbm{\pdfbookmark}
\begin{document}
\allowdisplaybreaks\pagestyle{fancy} \thispagestyle{fancyplain}
\lhead[\fancyplain{}\leftmark]{} \chead[]{}
\rhead[]{\fancyplain{}\rightmark} \cfoot{\rm\thepage}
\def\n{\noindent}
\newtheorem{theorem}{Theorem}
\newtheorem{lemma}{Lemma}
\newtheorem{corollary}{Corollary}
\newtheorem{proposition}{Proposition}
\theoremstyle{definition}
\newtheorem{definition}{Definition}
\newtheorem{example}{Example}
\newtheorem{remark}{Remark}
\newcount\fs
\def\dd{\kern 2pt\raise 3pt\hbox{\large .}\kern 2pt\ignorespaces}
\def\fontsizefi{\fontsize{5}{9.5pt plus.8pt minus .6pt}\selectfont}
\def\fontsizesi{\fontsize{6}{10.5pt plus.8pt minus .6pt}\selectfont}
\def\fontsizese{\fontsize{7}{10.5pt plus.8pt minus .6pt}\selectfont}
\def\fontsizeei{\fontsize{8}{12pt plus.8pt minus .6pt}\selectfont}
\def\fontsizeni{\fontsize{9}{13.5pt plus1pt minus .8pt}\selectfont}
\def\fontsizete{\fontsize{10}{14pt plus.8pt minus .6pt}\selectfont}
\def\fontsizeel{\fontsize{10.95}{15pt plus2pt minus .0pt}\selectfont}
\def\fontsizetw{\fontsize{12}{116.5pt plus1pt minus .8pt}\selectfont}
\def\fontsizefo{\fontsize{14.4}{18pt plus1pt minus .8pt}\selectfont}
\def\fontsizesn{\fontsize{17.28}{20pt plus1pt minus 1pt}\selectfont}
\def\fontsizetn{\fontsize{20.73}{26pt plus1pt minus 1pt}\selectfont}
\def\fontsizetf{\fontsize{24.88}{33pt plus1.5pt minus 1pt}\selectfont}
\def\sz#1#2{\fs=#1#2
   \ifnum\fs=05 \fontsizefi
   \else\ifnum\fs=06 \fontsizesi
   \else\ifnum\fs=07 \fontsizese
   \else\ifnum\fs=08 \fontsizeei
   \else\ifnum\fs=09 \fontsizeni
   \else\ifnum\fs=10 \fontsizete
   \else\ifnum\fs=11 \fontsizeel
   \else\ifnum\fs=12 \fontsizetw
   \else\ifnum\fs=14 \fontsizefo
   \else\ifnum\fs=17 \fontsizesn
   \else\ifnum\fs=20 \fontsizetn
   \else\ifnum\fs=25 \fontsizetf
\fi\fi \fi \fi \fi \fi \fi \fi \fi \fi \fi \fi }

\markright{{\scriptsize $\ $}\hfill\break\vskip -16pt {\scriptsize $\ $}\hfill\break\vskip -16pt {\scriptsize $\ $} \hfill}
\markboth {\hfill{\small\rm\Bauthor}\hfill} {\hfill {\small\rm\Bshorttitle}\hfill}

\renewcommand{\thefootnote}{}
\abovedisplayskip 12pt
\belowdisplayskip 12pt
\abovedisplayshortskip 5pt
\belowdisplayshortskip 9pt


\def\bbb {\mathbf{b}}
\def\bC {\mathbf{C}}
\def\bD {\mathbf{D}}
\def\bF {\mathbf{F}}
\def\bG {\mathbf{G}}
\def\bh {\mathbf{h}}
\def\bK {\mathbf{K}}
\def\bN {\mathbf{N}}
\def\bP {\mathbf{P}}
\def\bQ {\mathbf{Q}}
\def\bR {\mathbf{R}}
\def\bS {\mathbf{S}}
\def\bT {\mathbf{T}}
\def\bz {\mathbf{z}}
\def\bZ {\mathbf{Z}}
\def\ff {\mathbf{f}}

\def\fA {\mathfrak{A}}
\def\fH {\mathfrak{H}}
\def\fS {\mathfrak{S}}
\def\fZ {\mathfrak{Z}}

\def\cA {\mathcal{A}}
\def\cB {\mathcal{B}}
\def\cC {\mathcal{C}}
\def\cD {\mathcal{D}}
\def\cE {\mathcal{E}}
\def\cF {\mathcal{F}}
\def\cG {\mathcal{G}}
\def\cH {\mathcal{H}}
\def\cJ {\mathcal{J}}
\def\cK {\mathcal{K}}
\def\cL {\mathcal{L}}
\def\cM {\mathcal{M}}
\def\cN {\mathcal{N}}
\def\cP {\mathcal{P}}
\def\cQ {\mathcal{Q}}
\def\cR {\mathcal{R}}
\def\cS {\mathcal{S}}
\def\cT {\mathcal{T}}
\def\cX {\mathcal{X}}
\def\cZ {\mathcal{Z}}

\def\scrH{\mathscr{H}}
\def\scrL{\mathscr{L}}

\def\a {{\alpha}}
\def\b {{\beta}}
\def\g {{\gamma}}
\def\Ga {{\Gamma}}
\def\de {{\delta}}
\def\eps {{\epsilon}}
\def\th {{\theta}}
\def\io {{\iota}}
\def\ka {{\kappa}}
\def\l {{\lambda}}
\def\L {{\Lambda}}
\def\si {{\sigma}}
\def\Si {{\Sigma}}
\def\Ups {{\Upsilon}}
\def\tUps {{\tilde\Upsilon}}
\def\om {{\omega}}
\def\Om {{\Omega}}

\def\d {{\partial}}
\def\grad {{\nabla}}
\def\Dlt {{\Delta}}

\def\rstr {{\big |}}
\def\indc {{\bf 1}}

\def\la {\langle}
\def\ra {\rangle}
\def \La {\bigg\langle}
\def \Ra {\bigg\rangle}
\def \lA {\big\langle \! \! \big\langle}
\def \rA {\big\rangle \! \! \big\rangle}
\def \LA {\bigg\langle \! \! \! \! \! \; \bigg\langle}
\def \RA {\bigg\rangle \! \! \! \! \! \; \bigg\rangle}

\def\pp {{\parallel}}

\def\vide {{\varnothing}}

\def\wL {{w-L}}
\def\wto {{\rightharpoonup}}

\def\Sh {{\hbox{S\!h}}}
\def\Fr {{\hbox{Fr}}}
\def\Kn{{\hbox{K\!n}}}
\def\Ma{{\hbox{M\!a}}}
\def\Rey{{\hbox{R\!e}}}

\def\bFc {{\bF^{\hbox{conv}}_\eps}}
\def\bFcn{{\bF^{\hbox{conv}}_{\eps_n}}}
\def\bFd {{\bF^{\hbox{diff}}_\eps}}
\def\bFdn{{\bF^{\hbox{diff}}_{\eps_n}}}

\def\DDS {\mathcal{D}_{Slater}}
\def\De {{_\eta D}}


\newcommand{\vvec}{\overrightarrow}
\newcommand{\Div}{\operatorname{div}}
\newcommand{\Rot}{\operatorname{curl}}
\newcommand{\Diam}{\operatorname{diam}}
\newcommand{\Dom}{\operatorname{Dom}}
\newcommand{\Sign}{\operatorname{sign}}
\newcommand{\Span}{\operatorname{span}}
\newcommand{\Supp}{\operatorname{supp}}
\newcommand{\Det}{\operatorname{det}}
\newcommand{\Tr}{\operatorname{trace}}
\newcommand{\Codim}{\operatorname{codim}}
\newcommand{\Dist}{\operatorname{dist}}
\newcommand{\DDist}{\operatorname{Dist}}
\newcommand{\Ker}{\operatorname{Ker}}
\newcommand{\Ran}{\operatorname{Ran}}
\newcommand{\Id}{\operatorname{Id}}
\newcommand{\Lip}{\operatorname{Lip}}

\newcommand{\ba}{\begin{aligned}}
\newcommand{\ea}{\end{aligned}}

\newcommand{\be}{\begin{equation}}
\newcommand{\ee}{\end{equation}}

\newcommand{\lb}{\label}


\noindent{Bulletin of the Institute of Mathematics}\vskip -6pt
\noindent{Academia Sinica (New Series)}

$\ $\par \vskip .5cm \centerline{\large\bf THE STEADY BOLTZMANN} \smallskip
\centerline{\large\bf  AND NAVIER-STOKES EQUATIONS}\vskip .5cm

\centerline{KAZUO AOKI$^{1,a}$, FRAN\c{C}OIS GOLSE$^{2,b}$ AND SHINGO KOSUGE$^{1,c}$}
\vskip .5cm
{\sz08
\n $^1$Department of Mechanical Engineering and Science, Kyoto University, Kyoto 615-8540, Japan  \vskip -4pt
\n $^a$E-mail: \href{mailto:aoki.kazuo.7a@kyoto-u.ac.jp}{aoki.kazuo.7a@kyoto-u.ac.jp} \vskip -4pt
\n $^2$Centre de math\'ematiques Laurent Schwartz, Ecole polytechnique, 91128 Palaiseau cedex, France \vskip -4pt
\n $^b$E-mail: \href{mailto:francois.golse@polytechnique.edu}{francois.golse@polytechnique.edu} \vskip -4pt
\n $^c$E-mail: \href{mailto:kosuge.shingo.6r@kyoto-u.ac.jp}{kosuge.shingo.6r@kyoto-u.ac.jp}
}
\footnote{\footnotesize\hskip -.5cm AMS Subject Classification: 35Q30, 35Q20 (76P05, 76D05, 82C40).} 
\footnote{\footnotesize\hskip -.5cm Key words and phrases: Steady Boltzmann equation, Steady Navier-Stokes equation, Heat diffusion, Viscous heating, Periodic solutions}

\bigskip
\rightline{\it To our friend and colleague Prof. Tai-Ping Liu on his 70th birthday}
\bigskip
\bigskip
\centerline{\bf Abstract}\par \vskip .2cm 
The paper discusses the similarities and the differences in the mathematical theories of the steady Boltzmann and incompressible Navier-Stokes equations posed in a bounded domain. First we discuss two different scaling limits in which
solutions of the steady Boltzmann equation have an asymptotic behavior described by the steady Navier-Stokes Fourier system. Whether this system includes the viscous heating term depends on the ratio of the Froude number to the 
Mach number of the gas flow. While the steady Navier-Stokes equations with smooth divergence-free external force always have at least one smooth solutions, the Boltzmann equation  with the same external force set in the torus, or in a 
bounded domain with specular reflection of gas molecules at the boundary may fail to have any solution, unless the force field is identically zero. Viscous heating seems to be of key importance in this situation. The nonexistence of any
steady solution of the Boltzmann equation in this context seems related to the increase of temperature for the evolution problem, a phenomenon that we have established with the help of numerical simulations on the Boltzmann equation
and the BGK model.
\par

\sz11



\BS{Introduction}

The Boltzmann equation and its hydrodynamic limits have been widely studied in the time-dependent regime. The Cauchy problem for the Boltzmann equation is discussed in \cite{Ukai1974,KanielShin,IllnerShin,DiPernaPLL}. Hydrodynamic
limits of the Boltzmann equation are analyzed by various methods in \cite{Nishida78,Caflisch80,BGL0,DMEL,BGL1,BGL2,BardosUkai,BGL3,PLLMasmoudi1,PLLMasmoudi2,GL2002,LSRBoltzEuler,GSR2004,GSR2009,LvrmrMasmou} --- see 
also \cite{VillaniBourbaki} for a nice introduction to the mathematical analysis of hydrodynamic limits of the Boltzmann equation. 

Some of the results known in the case of the Cauchy problem set on a spatial domain which either the Euclidean space $\bR^3$ or the periodic box $\bT^3$ have been extended to the case of the initial-boundary value problem in a domain
$\Om$ of $\bR^3$: see \cite{Mischler,MasmouLSR,BardosFGPaillard,FGCamwa} --- see also \cite{SRBook} for a more synthetic presentation of this material as well as further results.

By comparison, the mathematical literature on the analogous problems in the case of steady solutions is much more scarce. A great collection of asymptotic and numerical results on the Boltzmann equation in the steady regime can be found
in the books \cite{SoneBook1,SoneBook2}. As for the mathematical analysis of the boundary value problem for the Boltzmann equation, the main references are \cite{Guiraud1,Guiraud2} (see also \cite{GuiraudICM}), together with the more
modern references \cite{ArkeNouri,GuoEspoMarra}. Steady solutions of the Boltzmann equation for a gas flow past an obstacle have been investigated in detail in \cite{UkaiAsano1,UkaiAsano2}.

There are striking analogies between the Boltzmann and the incompressible Navier-Stokes equations in three space dimensions --- in the words of P.-L. Lions \cite{PLLKyoto2} ``[...] the global existence result of [renormalized] solutions [...] can 
be seen as the analogue for Boltzmann's equation to the pioneering work on the Navier-Stokes equations by J. Leray''. This analogy is at the origin of the program outlined in \cite{BGL0,BGL1,BGL2} and carried out in \cite{GSR2004,GSR2009}
--- see also \cite{LvrmrMasmou} for the extension to a more general class of collision kernels. 

It has been known for a long time that the regularity theory of solutions of the incompressible Navier-Stokes equations is much easier in the steady than in the time-dependent regime. For instance, in space dimension $3$, steady solutions of 
the Dirichlet problem for the incompressible Navier-Stokes equations have the same regularity as their boundary data and the external force field driving them: see Proposition 1.1 and Remark 1.6 in  chapter II, \S 1 of \cite{Temam}. At variance, 
it is still unknown at the time of this writing whether Leray solutions of the Cauchy problem for the Navier-Stokes equations in space dimension $3$ propagate the regularity of their initial data --- see Problems A-B in \cite{Fefferman}.

One striking difference between the steady and the time-dependent problems for the incompressible Navier-Stokes equations is the uniqueness theory, and its relation to the regularity of solutions. Smooth solutions of the time-dependent 
incompressible Navier-Stokes equations in space dimension $3$ are known to be uniquely determined by their initial data (and driving force field) within the class of Leray weak solutions, a remarkable result proved by Leray himself (see \S 32 
in \cite{Leray34}). On the contrary, it can be proved that bifurcations do occur on the steady problem for the incompressible Navier-Stokes equations, leading to nonuniqueness results. Such a nonuniqueness result for the Taylor-Couette problem 
has been proved in \cite{Welte} --- see also Chapter II, \S 4 in \cite{Temam}. Analogous bifurcations for the Boltzmann equation have been observed numerically in \cite{SoneDoi}, and mathematically in \cite{ArkeNouriTC}.

These considerations suggest studying the mathematical theory of existence and regularity for steady solutions of the Boltzmann equation driven by an external force field. In particular, does the assumption of a steady regime simplify regularity 
issues, as in the case of the Navier-Stokes equations?

As we shall see, there are strong similarities between the Boltzmann and the Navier-Stokes equations in the steady regime. After reviewing the basic structure of the Boltzmann equation in section 1, we propose in section 2 a formal derivation 
of two variants of the incompressible Navier-Stokes-Fourier system from the Boltzmann equation under appropriate scaling assumptions. Section 3 discusses the steady Navier-Stokes and Boltzmann equation in the periodic setting, which
leads to a striking difference between both models. Section 4 provides a physical explanation for this difference, based on numericla simulations of the evolution problem. After a brief section 5 summarizing our conclusions, some computations
involving Gaussian averages of certain vector and tensor fields have been put together in an appendix.

Professor Tai-Ping Liu is at the origin of some of the most striking results on the mathematical analysis of the equations fluid dynamics --- his work \cite{Liu78} on the compressible Euler system, and his analysis of the stability of the Boltzmann
shock profile in collaboration with S.-H. Yu \cite{LiuYu04} have had a lasting impact on our field. We are pleased to offer him this modest contribution on the occasion of his 70th birthday.

\newpage

\setcounter{chapter}{1}
\setcounter{equation}{0} 

\BS{1. The Steady Boltzmann Equation with External Force Field}

The Boltzmann equation with external force field $f\equiv f(x)\in\bR^3$ is posed in a bounded, convex spatial domain $\Om\subset\bR^3$ with smooth boundary $\d\Om$. The outward unit normal vector field on $\d\Om$ is denoted by $n_x$. 
According to \S 1.9 in \cite{SoneBook2}, its dimensionless form is
\be\lb{DimLessB}
v\cdot\grad_xF+\frac{\Ma^2}{\Fr^2}f\cdot\grad_vF=\frac1{\Kn}\cC(F)\,,\qquad x\in\Om\,,\,\,v\in\bR^3\,,
\ee
where
$$
\cC(F):=\iint_{\bR^3\times\bS^2}(F'F'_*-FF_*)|(v-v_*)\cdot\om|dv_*d\om\,,
$$
with the usual notation 
$$
F=F(x,v)\,,\quad F_*=F(x,v_*)\,,\quad F'=F(x,v')\,,\quad\hbox{ and }F'_*=F(x,v'_*)\,,
$$
assuming that
$$
v'=v-(v-v_*)\cdot\om\om\,,\quad\hbox{ and }v'_*=v_*+(v-v_*)\cdot\om\om\,.
$$
The dimensionless numbers $\Ma$, $\Fr$ and $\Kn$ are respectively the Mach, Froude and Knudsen numbers. We recall the definitions of these dimensionless numbers:
$$
\Ma=\frac{U_0}{\sqrt{RT_0}}\,,\qquad\Fr=\frac{U_0}{\sqrt{F_0L_0}}\,,\qquad\Kn=\frac{\ell_0}{L_0}\,,
$$
where $R$ is the specific gas constant, henceforth set to one for simplicity. In these formulas, $U_0$, $T_0$, $F_0$, $L_0$ are respectively the reference speed, temperature and external force in the gas, while $L_0$ is the reference length 
scale and $\ell_0$ is the mean free path of the gas molecules at the reference state.

We recall that the local conservation laws of mass, momentum and energy for the collision integral are
$$
\int_{\bR^3}\cC(F)dv=\int_{\bR^3}v_i\cC(F)dv=\int_{\bR^3}|v|^2\cC(F)dv=0\quad\hbox{ a.e. on }\Om
$$
for $i=1,2,3$, provided that $F\ge 0$ is measurable and decays rapidly enough as $|v|\to\infty$ --- so that, for instance
$$
\iint_{\Om\times\bR^3}(1+|v|^3)(1+|f|)Fdvdx<\infty\,.
$$
(See for instance chapter I, \S 4, Corollary 1 in \cite{Glassey}.) This implies the local mass, momentum and energy balance identities, which are the differential identities
$$
\ba
\Div_x\int_{\bR^3}vFdv&=0\,,&&\qquad\hbox{(mass)}
\\
\Div_x\int_{\bR^3}v\otimes vFdv&=\frac{\Ma^2}{\Fr^2}f\int_{\bR^3}Fdv\,,&&\qquad\hbox{(momentum)}
\\
\Div_x\int_{\bR^3}v\tfrac12|v|^2Fdv&=\frac{\Ma^2}{\Fr^2}f\cdot\int_{\bR^3}vFdv\,,&&\qquad\hbox{(energy)}
\ea
$$
are satisfied by all the solutions of the Boltzmann equation having the decay property mentioned above. Notice that this property implies that
$$
R^2_n\int_\Om\int_{|v|=R_n}F|f|ds(v)dx\to 0
$$
for some sequence $R_n\to\infty$. (The notation $ds$ designates the surface element on the sphere of radius $R_n$ centered at the origin.)

Integrating further in $x$ and applying Green's formula leads to the identities:
$$
\ba
\int_{\d\Om}\int_{\bR^3}Fv\cdot n_xdvdS(x)&=0\,,&&\qquad\hbox{(mass)}
\\
\int_{\d\Om}\int_{\bR^3}vF(v\cdot n_x)dvdS(x)&=\frac{\Ma^2}{\Fr^2}\iint_{\Om\times\bR^3}fFdxdv\,,&&\qquad\hbox{(momentum)}
\\
\int_{\d\Om}\int_{\bR^3}\frac12|v|^2F(v\cdot n_x)dvdS(x)&=\frac{\Ma^2}{\Fr^2}\iint_{\Om\times\bR^3}v\cdot fFdxdv\,,&&\qquad\hbox{(energy)}
\ea
$$
where $dS$ is the surface element on $\d\Om$, which are the global balance laws of mass, momentum and energy.

Multiplying both sides of the Boltzmann equation by $\ln F+1$ leads to the identity
$$
v\cdot\grad_x(F\ln F)+\frac{\Ma^2}{\Fr^2}f\cdot\grad_v(F\ln F)=\frac1{\Kn}\cC(F)(\ln F+1)\,.
$$
Integrating in $v$ leads to the local form of Boltzmann's H theorem (see for instance chapter I, \S 4, Corollary 1 in \cite{Glassey})
$$
\Div_x\int_{\bR^3}vF\ln Fdv=\frac1{\Kn}\int_{\bR^3}\cC(F)\ln Fdv\le 0\,,
$$
assuming again that $F$ decays rapidly enough as $|v|\to\infty$ --- for instance
$$
\iint_{\Om\times\bR^3}(1+|v|)(1+|f|)F\ln Fdvdx<\infty\,,
$$
so that
$$
R_n\int_\Om\int_{|v|=R_n}F|\ln F||f|ds(v)dx\to 0
$$
for some sequence $R_n\to\infty$.

Integrating further in $x$, one obtains the global form of Boltzmann's H theorem
$$
\int_{\d\Om}\int_{\bR^3}F\ln Fv\cdot n_xdvdS(x)=\frac1{\Kn}\iint_{\Om\times\bR^3}\cC(F)\ln Fdvdx\le 0\,.
$$
Boltzmann's H theorem asserts that the inequality above is an equality if and only if $F$ is a local Maxwellian, i.e. is of the form
$$
F(x,v)=\cM_{(\rho(x),u(x),\th(x))}(v)\,,
$$
with the notation
\be\lb{Maxw}
\cM_{(\rho,u,\th)}=\frac{\rho}{(2\pi\th)^{3/2}}e^{-\frac{|v-u|^2}{2\th}}\,.
\ee
(See for instance chapter I, \S \S 5 and 7, Corollary 2 in \cite{Glassey}.) .


\setcounter{chapter}{2}                           
\setcounter{equation}{0} 

\BS{2. The Navier-Stokes Limit for the Boltzmann Equation}

In this section, we explain how two variants of the Navier-Stokes-Fourier system can be derived from the Boltzmann equation. The exposition is formal and follows the style adopted in \cite{BGL0,BGL1}. The difference between the limiting
systems comes from the different scaling assumptions on the Froude number. Specifically, we are concerned with those variants of the incompressible Navier-Stokes-Fourier system which do, or do not include the viscous heating term.
This particular feature of the fluid dynamic limit of the Boltzmann equation is of key importance for the discussion in the present work.


\bs{2.1. From Boltzmann to Navier-Stokes-Fourier}

In this section, we assume the following scaling, where $\eps>0$ is a small parameter:
\be\lb{Scal1}
\Ma=\Kn=\eps\,,\quad\hbox{ and }\Fr=\sqrt{\eps}\,.
\ee
Hence, the scaled Boltzmann equation takes the form
$$
v\cdot\grad_xF_\eps+\eps f\cdot\grad_vF_\eps=\frac1\eps\cC(F_\eps)\,,\qquad x\in\Om\,,\,\,v\in\bR^3\,.
$$
In addition, write the Helmholtz decomposition of the external force field as
$$
f(x)=-\grad\Phi(x)+\eps f_s(x)\,,\quad\hbox{ with }\Div f_s=0\,.
$$
(In other words, we assume that the divergence free component of the external force is small compared to its curl free component.) With this additional assumption, the scaled Boltzmann equation becomes
\be\lb{ScalB1}
v\cdot\grad_xF_\eps-\eps\grad\Phi(x)\cdot\grad_vF_\eps+\eps^2f_s(x)\cdot\grad_vF_\eps=\frac1\eps\cC(F_\eps)\,,\qquad x\in\Om\,,\,\,v\in\bR^3\,.
\ee

Seek $F_\eps$ in the form
\be\lb{ScalDist1}
F_\eps(x,v)=M(v)\left(Z_\eps e^{\eps\Phi(x)}+\eps g_\eps(x,v)\right)\,,
\ee
assuming that 
\be\lb{TMassFluct}
\iint_{\Om\times\bR^3}g_\eps Mdxdv=0\,,
\ee
with the notation
$$
M:=\cM_{(1,0,1)}\,,
$$
and
\be\lb{NormZ}
\frac1{Z_\eps}:=\frac1{|\Om|}\int_{\Om}e^{\eps\Phi(x)}dx\,.
\ee
In other words, the distribution function is sought in the form of a perturbation of the order of the Mach number $\Ma=\eps$ about the uniform Maxwellian $M$. Except for the scaling assumption on the external force, this is exactly the same scaling
assumption as in \cite{BGL0,BGL1}.

In terms of $g_\eps$, the scaled Boltzmann equation (\ref{ScalB1}) takes the form
\be\lb{ScalBoltzg}
\ba
\eps v\cdot\grad_xg_\eps-\eps^2M^{-1}\grad_x\Phi\cdot\grad_v(Mg_\eps)+\eps^2M^{-1}f_s(x)\cdot\grad_v\left(M(Z_\eps e^{\eps\Phi(x)}+\eps g_\eps)\right)
\\
=-Z_\eps e^{\eps\Phi(x)}\cL g_\eps+\eps\cQ(g_\eps,g_\eps)
\ea
\ee
where 
\be\lb{DefLQ}
\cL g:=-M^{-1}D\cC(M)\cdot(Mg)\,,\qquad\cQ(g,g):=M^{-1}\cC(Mg)\,,
\ee
are respectively the linearized collision integral at $M$ and the collision integral intertwined with the multiplication by $M$. (The notation $D\cC(M)\cdot(Mg)$ designates the differential of the collision integral $\cC$ evaluated at $M$, and applied 
to the variation $Mg$ of distribution function.) The quadratic operator $\cQ$ defines a unique bilinear symmetric operator, also denoted $\cQ$, by the polarization  formula
$$
\cQ(f,g):=\tfrac12(\cQ(f+g,f+g)-\cQ(f,f)-\cQ(g,g))\,.
$$
In other words,
$$
\cL g=\iint_{\bR^3\times\bS^2}(g+g_*-g'-g'_*)|(v-v_*)\cdot\om|M_*dv_*d\om\,,
$$
while
$$
\cQ(f,g)=\tfrac12\iint_{\bR^3\times\bS^2}(f'g'_*+f'_*g'-fg_*-f_*g)|(v-v_*)\cdot\om|M_*dv_*d\om\,.
$$

Henceforth, the integration with respect to the Gaussian weight $M$ is denoted as follows:
$$
\la\phi\ra:=\int_{\bR^3}\phi(v)M(v)dv\,.
$$

\begin{theorem}\lb{T-NSF}
Let $F_\eps$ be a family of solutions of the scaled Boltzmann equation (\ref{ScalB1}), whose relative fluctuation $g_\eps$ defined in (\ref{ScalDist1}) satisfies
$$
g_\eps\to g 
$$
weakly in $L^2(\Om\times\bR^3;(1+|v|^2)Mdvdx)$, and
$$
\cQ(g_\eps,g_\eps)\to\cQ(g,g)
$$
weakly in $L^1(\Om;L^2(\bR^3;(1+|v|^2)Mdv))$, while
$$
\eps\la g_\eps\grad\phi(v)\ra\to 0
$$
weakly in $L^2(\Om)$ for each $\phi\in L^2(\bR^3;(1+|v|^2)Mdv)$.

Then
$$
g(x,v):=\overline{\th}+u(x)\cdot v+\th(x)\tfrac12(|v|^2-5)
$$
where 
$$
\overline\th:=\frac1{|\Om|}\int_\Om\th(x)dx
$$
and $(u,\th)$ is a solution of the incompressible Navier-Stokes-Fourier system
\be\lb{NSF}
\left\{
\ba
{}&\Div u=0\,,
\\	\\
&\Div(u^{\otimes 2})+\grad p=\nu\Dlt u+\th\grad\Phi+f_s\,,
\\	\\
&\tfrac52\Div(u\th)=\ka\Dlt\th-u\cdot\grad\Phi\,.
\ea
\right.
\ee
The values of the viscosity $\nu$ and heat diffusivity $\ka$ are determined implicitly in terms of the collision integral, by formulas (\ref{FlaNu}) and (\ref{FlaKa}).
\end{theorem}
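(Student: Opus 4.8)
The plan is to run the formal moment method of Bardos--Golse--Levermore, using throughout that the linearized collision operator $\cL$ is self-adjoint and nonnegative on $L^2(Mdv)$, that it is Fredholm with $\Ker\cL=\Span\{1,v_1,v_2,v_3,|v|^2\}$ and boundedly invertible on $(\Ker\cL)^\perp$, and that both $\cL$ and $\cQ$ are orthogonal to the collision invariants, i.e. $\la\psi\cL h\ra=\la\psi\cQ(h,h)\ra=0$ for every $\psi\in\Ker\cL$. First I would recover the shape of the limit. Rewriting (\ref{ScalBoltzg}) as $\cL g_\eps=\eps(Z_\eps e^{\eps\Phi})^{-1}\big(\cQ(g_\eps,g_\eps)-v\cdot\grad_x g_\eps+O(\eps)\big)$ shows that $\cL g_\eps=O(\eps)$; passing to the weak limit under the hypotheses of the theorem gives $\cL g=0$, so $g$ is an infinitesimal Maxwellian $g=\rho+u\cdot v+\tfrac12\th(|v|^2-3)$ for some scalar fields $\rho,\th$ and vector field $u$ on $\Om$.

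The constraints come next, from the moments against collision invariants, which annihilate the collision terms exactly and leave $\grad_x\cdot\la\psi v g_\eps\ra+O(\eps)=0$. Taking $\psi=1$ and using $\la v_iv_j\ra=\de_{ij}$, the leading order gives $\Div\la vg\ra=\Div u=0$, i.e. incompressibility. Taking $\psi=v$, the leading momentum flux is $\la v\otimes v g\ra=(\rho+\th)I$ up to a traceless part that vanishes in the limit, so $\grad(\rho+\th)=0$: this is the Boussinesq relation $\rho+\th=\text{const}$. Combined with the mass normalization (\ref{TMassFluct}), which forces $\int_\Om\rho\,dx=0$, the constant is identified as $\overline\th$, whence $\rho=\overline\th-\th$ and $g=\overline\th+u\cdot v+\tfrac12\th(|v|^2-5)$, the announced form.

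To produce the two Navier--Stokes--Fourier equations I would pass to the next order through the non-hydrodynamic part $g_\eps^\perp:=(\Id-\Pi)g_\eps$, $\Pi$ being the orthogonal projection onto $\Ker\cL$. Since $\cL g_\eps=\cL g_\eps^\perp$ and $\cL^{-1}$ is bounded on $(\Ker\cL)^\perp$, the rewriting above yields $\tfrac1\eps g_\eps^\perp\to\cL^{-1}(\Id-\Pi)\big(\cQ(g,g)-v\cdot\grad_x g\big)$. With $A:=v\otimes v-\tfrac13|v|^2I$ and $B:=\tfrac12(|v|^2-5)v$, both in $(\Ker\cL)^\perp$, and $\hat A:=\cL^{-1}A$, $\hat B:=\cL^{-1}B$, self-adjointness gives $\tfrac1\eps\la A g_\eps\ra\to\la\hat A(\cQ(g,g)-v\cdot\grad_x g)\ra$ and the analogous limit for $B$. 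The decisive algebraic input is the Maxwellian identity $\cQ(g,g)=\cL h$ with $h=\tfrac12 g^2+(\text{collision invariants})$, so that $\la\hat A\,\cQ(g,g)\ra=\la A h\ra=\tfrac12\la A g^2\ra$ reduces to a Gaussian moment producing the convective tensor $\Div(u^{\otimes2})$ modulo a gradient, while $-\la\hat A\,v\cdot\grad_x g\ra$ produces $-\nu\Dlt u$ with $\nu$ fixed by $\la\hat A:A\ra$ as in (\ref{FlaNu}). Dividing the $v$-moment of (\ref{ScalBoltzg}) by $\eps^2$, the trace part of the momentum flux yields the pressure gradient $\grad p$, and the force contributes $\grad\Phi\,\la g\ra-f_s=(\overline\th-\th)\grad\Phi-f_s$; absorbing the exact gradient $\grad(\overline\th\Phi)$ into $p$ leaves exactly $\th\grad\Phi+f_s$ on the right, which is (\ref{NSF})$_2$. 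The temperature equation follows verbatim from the $\tfrac12(|v|^2-5)$-moment: its flux gives $\tfrac52\Div(u\th)-\ka\Dlt\th$ with $\ka$ fixed by $\la\hat B\cdot B\ra$ as in (\ref{FlaKa}), and the force term integrated by parts contributes $u\cdot\grad\Phi$, yielding (\ref{NSF})$_3$.

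The main obstacle is the moment algebra of this last paragraph: turning the abstract limits $\la\hat A(\cQ(g,g)-v\cdot\grad_x g)\ra$ and its $B$-analogue into the precise tensors $\Div(u^{\otimes2})-\nu\Dlt u$ and $\tfrac52\Div(u\th)-\ka\Dlt\th$, then routing the forcing correctly through the Boussinesq relation and absorbing every exact gradient into the pressure so that the buoyancy terms $\th\grad\Phi$ and $-u\cdot\grad\Phi$ appear with the right signs. Because the derivation is formal, the functional-analytic convergences are exactly what the three weak-convergence hypotheses of the theorem guarantee, and the explicit Gaussian averages that define $\nu$ and $\ka$ can be relegated to the appendix.
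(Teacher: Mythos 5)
Your proposal is correct and follows essentially the same route as the paper's proof: the Bardos--Golse--Levermore moment method, with the infinitesimal-Maxwellian and Boussinesq steps, the self-adjointness identity $\tfrac1\eps\la A g_\eps\ra=\la\hat A\,\tfrac1\eps\cL g_\eps\ra$ combined with the key lemma $\cQ(g,g)=\tfrac12\cL(g^2)$ for $g\in\Ker\cL$, and the Gaussian moments of $\hat A,\hat B$ giving $\nu$ and $\ka$. The only point the paper treats with more care is the non-convergent trace term $\tfrac1\eps\la\tfrac13|v|^2g_\eps\ra$, whose limit is identified as a pressure gradient via a de Rham-type argument on divergence-free test fields, whereas you simply assert that it yields $\grad p$.
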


We recall the following fundamental result.

\begin{lemma}\lb{L-Hilb}
The operator $\cL$ is an unbounded self-adjoint operator on the Hilbert space $L^2(\bR^3;Mdv)$ with domain $L^2(\bR^3;(1+|v|^2)Mdv)$. Moreover
$$
\cL\ge 0\quad\hbox{Êand }\Ker\cL=\Span\{1,v_1,v_2,v_3,|v|^2\}\,.
$$
Finally, $\cL$ satisfies the Fredholm alternative: one has
$$
\Ran\cL=(\Ker\cL)^\perp\,.
$$
\end{lemma}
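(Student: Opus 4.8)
The plan is to reduce the four assertions to Grad's decomposition of $\cL$ into a multiplication operator plus a compact operator, treating symmetry and nonnegativity first by a direct manipulation of the collision symmetries. First I would symmetrize the bilinear form $\la\phi,\cL\psi\ra$ using the two involutions $(v,v_*)\mapsto(v',v'_*)$ and $(v,v_*)\mapsto(v_*,v)$: each has unit Jacobian, leaves $|(v-v_*)\cdot\om|$ invariant, and preserves $MM_*$ since $|v|^2+|v_*|^2=|v'|^2+|v'_*|^2$. This recasts the form as
\[
\la\phi,\cL\psi\ra=\tfrac14\iint_{\bR^3\times\bR^3}\int_{\bS^2}(\phi+\phi_*-\phi'-\phi'_*)(\psi+\psi_*-\psi'-\psi'_*)|(v-v_*)\cdot\om|\,M_*M\,d\om\,dv_*\,dv,
\]
which is manifestly symmetric in $(\phi,\psi)$ and, on taking $\psi=\phi$, manifestly nonnegative. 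This simultaneously yields formal self-adjointness, the bound $\cL\ge0$, and the fact that $\la\phi,\cL\phi\ra=0$ forces the collision-invariance relation $\phi+\phi_*=\phi'+\phi'_*$ a.e.

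Next I would record the splitting $\cL=\nu-K$, where the collision frequency
\[
\nu(v):=\iint_{\bR^3\times\bS^2}|(v-v_*)\cdot\om|\,M_*\,dv_*\,d\om
\]
is the multiplication operator produced by the contribution of $g(v)$ itself, while $K$ gathers the remaining three contributions carried by $g_*$, $g'$ and $g'_*$. For the hard-sphere kernel here one verifies the two-sided bound $\nu_0(1+|v|)\le\nu(v)\le\nu_1(1+|v|)$ with $0<\nu_0\le\nu_1<\infty$; in particular $\nu$ is bounded below by a positive constant and $\nu^2$ is comparable to $1+|v|^2$. Hence multiplication by $\nu$ is a nonnegative self-adjoint operator on $L^2(\bR^3;M\,dv)$ whose domain is exactly $\{g:\nu g\in L^2(M\,dv)\}=L^2(\bR^3;(1+|v|^2)M\,dv)$, matching the domain claimed for $\cL$.

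The hard part will be the compactness of $K$, i.e. the Hilbert--Grad theorem. The plan is to exhibit $K$ as an integral operator $Kg(v)=\int_{\bR^3}k(v,w)g(w)M(w)\,dw$ with a symmetric kernel $k$ obtained by performing the $\om$-integration in the gain terms, and then to establish compactness by approximating $k$ with truncations $k\,\indc_{\{|v-w|>\delta,\,|v|<R\}}$ that are Hilbert--Schmidt on $L^2(\bR^3;M\,dv)$, controlling the remainder as $\delta\to0$ and $R\to\infty$ via the pointwise decay estimates on $k$. Granting compactness, $K$ is a bounded symmetric operator, so $\cL=\nu-K$ is self-adjoint on $\Dom(\nu)=L^2(\bR^3;(1+|v|^2)M\,dv)$; and since $K$ is compact, Weyl's theorem shows that the essential spectrum of $\cL$ coincides with that of $\nu$, hence is bounded away from $0$.

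It remains to identify the kernel and to extract the Fredholm alternative. By the quadratic-form identity above, $g\in\Ker\cL$ iff $g+g_*-g'-g'_*=0$ for a.e. collision configuration, and the classical characterization of collision invariants identifies these with $\Span\{1,v_1,v_2,v_3,|v|^2\}$, a five-dimensional space. Finally, because $0$ lies strictly below the essential spectrum of $\cL$, it is either in the resolvent set or an isolated eigenvalue of finite multiplicity; in either case $\Ran\cL$ is closed. Since $\overline{\Ran\cL}=(\Ker\cL)^\perp$ holds for any self-adjoint operator, closedness upgrades this to $\Ran\cL=(\Ker\cL)^\perp$, which is the Fredholm alternative.
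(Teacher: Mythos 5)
Your outline is correct, and it is in substance the very argument that the paper points to: the paper offers no proof of this lemma at all, but simply attributes it to Hilbert and refers the reader to Glassey (ch.~III, \S\S 4--5) and Cercignani (ch.~IV, \S 6), where the proof is exactly the one you reconstruct --- symmetrization of the Dirichlet form for nonnegativity and the identification of the kernel with the collision invariants, Grad's splitting $\cL=\nu-K$ with the hard-sphere collision frequency satisfying $\nu_0(1+|v|)\le\nu(v)\le\nu_1(1+|v|)$ (which gives the stated domain), compactness of $K$, and then Weyl plus closed range to get $\Ran\cL=(\Ker\cL)^\perp$. The one step where you, like the paper, defer to the classical literature is the compactness of $K$: the plan you describe (perform the $\om$-integration to exhibit the explicit kernel $k(v,w)$, truncate near the diagonal and at large $|v|$, and verify the truncations are Hilbert--Schmidt while the remainders are small in operator norm) is the right one, but the pointwise decay estimates on $k$ are the real content of the Hilbert--Grad theorem and would need to be carried out to make the argument self-contained. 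A second, minor point worth flagging is that identifying $\Ker\cL$ requires the characterization of \emph{measurable} collision invariants, not just continuous ones, since elements of $L^2(\bR^3;Mdv)$ are only defined a.e.; this too is classical but is a separate lemma, not a formality.
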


This has been proved by Hilbert in 1912 (see for instance \cite{Glassey}, chapter III, \S \S 4-5 and \cite{Cerci75}, chapter IV, \S 6).

\begin{proof}
The proof of Theorem \ref{T-NSF} is rather involved; we follow the discussion in \cite{BGL0,BGL1}.

\smallskip
\noindent
\textit{Step 1: asymptotic form of the number density fluctuations}

Assuming that $g_\eps\to g$ weakly in $L^2(\Om\times\bR^3;Mdxdv)$, we pass to the limit in the sense of distributions in both sides of the equality (\ref{ScalBoltzg}), and obtain
$$
\cL g=0\,.
$$
Thus $g$ is of the form
\be\lb{Limg}
g(x,v)=\rho(x)+u(x)\cdot v+\th(x)\tfrac12(|v|^2-3)\,.
\ee

\smallskip
\noindent
\textit{Step 2: divergence-free and hydrostatic conditions}

Multiplying both sides of the scaled Boltzmann equation by $\frac1\eps M$ and integrating in $v$ leads to
$$
\Div_x\la vg_\eps\ra=\eps\int_{\bR^3}\Div_v(M(g_\eps\grad\Phi(x)-(Z_\eps e^{\eps\Phi(x)}+\eps g_\eps)f_s(x)))dv=0\,.
$$
Passing to the limit in both sides of this equality in the sense of distributions as $\eps\to 0$, we get
\be\lb{Incompr}
\Div_xu=\Div_x\la vg\ra=0\,.
\ee

Mutiplying both sides of the scaled Boltzmann equation by $\frac1\eps Mv$ and integrating in $v$ leads to
$$
\Div_x\la v^{\otimes 2}g_\eps\ra=-\eps\grad\Phi(x)\la g_\eps\ra+\eps f_s(x)\la(Z_\eps e^{\eps\Phi(x)}+\eps g_\eps)\ra\,.
$$
Passing to the limit as $\eps\to 0$ shows that
\be\lb{PreBouss}
\grad_x(\rho+\th)=\Div_x\la v^{\otimes 2}g\ra=0\,.
\ee
Passing to the limit in (\ref{TMassFluct}) shows that
$$
\int_\Om\rho(x)dx=\int_\Om\la g\ra(x)dx=0\,,
$$
so that
\be\lb{Bouss}
\rho(x)+\th(x)=\overline{\th}\,,\quad\hbox{ with }\overline\th:=\frac1{|\Om|}\int_\Om\th(x)dx\,.
\ee

\smallskip
\noindent
\textit{Step 3: motion equation}

Mutiplying both sides of the scaled Boltzmann equation by $\frac1{\eps^2}Mv$ and integrating in $v$ shows that
$$
\ba
\Div_x\frac1\eps\la A(v)g_\eps\ra&+\grad_x\frac1\eps\la\tfrac13|v|^2g_\eps\ra
\\
=&-\grad\Phi\la g_\eps\ra
\\
&+f_s\la(Z_\eps e^{\eps\Phi}+\eps g_\eps)\ra
\\
\to&-\rho\grad\Phi(x)+f_s
\ea
$$
in the sense of distributions as $\eps\to 0$, with the notation
$$
A(v):=v^{\otimes 2}-\tfrac13|v|^2\,.
$$
Using (\ref{Bouss}), we recast the limit above as
\be\lb{MomCvg}
\Div_x\frac1\eps\la A(v)g_\eps\ra+\grad_x\frac1\eps\la\tfrac13|v|^2g_\eps\ra\to(\th-\overline{\th})\grad\Phi+f_s\,.
\ee

The elementary properties of the tensor field $A$ used in the paper are recalled in the Appendix. In particular, elementary computations and Hilbert's Lemma \ref{L-Hilb} show that 
$$
A\in(\Ker\cL)^\perp=\Ran(\cL)\,,
$$
so that there exists a unique tensor field denoted $\hat A$ such that
$$
\cL\hat A=A\quad\hbox{ and }\hat A\bot\Ker\cL\,.
$$
(See Lemma \ref{L-PtyAB}.) Thus
$$
\frac1\eps\la A(v)g_\eps\ra=\La\hat A(v)\frac1\eps\cL g_\eps\Ra\,.
$$

Returning to the scaled Boltzmann equation, we observe that
$$
\ba
Z_\eps e^{\eps\Phi(x)}\frac1\eps\cL g_\eps=&\cQ(g_\eps,g_\eps)-v\cdot\grad_xg_\eps
\\
&+\eps M^{-1}\grad_x\Phi\cdot\grad_v(Mg_\eps)
\\
&-\eps M^{-1}f_s(x)\cdot\grad_v(M(Z_\eps e^{\eps\Phi(x)}+\eps g_\eps))
\\
\to&\cQ(g,g)-v\cdot\grad_xg
\ea
$$
as $\eps\to 0$. 

At this point, we recall the following useful result.

\begin{lemma}\lb{L-QKerL}
For each $\phi,\psi\in\Ker\cL$, one has
$$
\cQ(\phi,\psi)=\tfrac12\cL(\phi\psi)\,.
$$
\end{lemma}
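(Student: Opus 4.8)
The plan is to exploit the fact that the elements of $\Ker\cL$ are precisely the collision invariants, and then to reduce the claimed identity to a purely algebraic factorization inside the common integral defining $\cL$ and $\cQ$. First I would record that, by Lemma~\ref{L-Hilb}, any $\phi,\psi\in\Ker\cL$ lie in $\Span\{1,v_1,v_2,v_3,|v|^2\}$; since each of these five functions is conserved in a binary elastic collision, so is every linear combination of them, and hence $\phi$ and $\psi$ satisfy the microscopic conservation relations
$$
\phi+\phi_*=\phi'+\phi'_*\,,\qquad\psi+\psi_*=\psi'+\psi'_*\,,
$$
for all $v,v_*\in\bR^3$ and $\om\in\bS^2$, where $v'=v-(v-v_*)\cdot\om\,\om$ and $v'_*=v_*+(v-v_*)\cdot\om\,\om$. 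These identities follow directly from the collision rule: the case $\phi=1$ is trivial, $\phi=v_i$ is momentum conservation $v'+v'_*=v+v_*$, and $\phi=|v|^2$ is energy conservation.

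Next I would observe that $\cL(\phi\psi)$ and $2\cQ(\phi,\psi)$ are given by integrals over $dv_*\,d\om$ against the same measure $|(v-v_*)\cdot\om|\,M_*\,dv_*\,d\om$, the only difference being their integrands; note that $(\phi\psi)_*=\phi_*\psi_*$, $(\phi\psi)'=\phi'\psi'$, and $(\phi\psi)'_*=\phi'_*\psi'_*$. Subtracting the two expressions therefore reduces the whole statement to showing that, pointwise in $v$ and under the integral sign,
$$
\big(\phi\psi+\phi_*\psi_*-\phi'\psi'-\phi'_*\psi'_*\big)-\big(\phi'\psi'_*+\phi'_*\psi'-\phi\psi_*-\phi_*\psi\big)=0\,.
$$
The key algebraic step is to regroup the unprimed and the primed terms separately: the four unprimed terms combine to $(\phi+\phi_*)(\psi+\psi_*)$, while the four primed terms combine to $-(\phi'+\phi'_*)(\psi'+\psi'_*)$, so the left-hand side equals $(\phi+\phi_*)(\psi+\psi_*)-(\phi'+\phi'_*)(\psi'+\psi'_*)$. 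By the two conservation relations above, each factor is invariant under the collision, and this difference vanishes identically, giving $\cL(\phi\psi)=2\cQ(\phi,\psi)$.

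I would remark that no integrability issue arises: $\phi\psi$, $\phi_*\psi_*$ and their primed counterparts are polynomials in $(v,v_*)$, integrated against the Gaussian weight $M_*$ with the locally bounded kernel $|(v-v_*)\cdot\om|$, so every integral converges absolutely and the term-by-term manipulation is legitimate. I would also emphasize that the Maxwellian product identity $MM_*=M'M'_*$ plays no role here, precisely because the weight $M_*$ is common to the integrands of $\cL(\phi\psi)$ and $\cQ(\phi,\psi)$ and simply factors out of the comparison.

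There is no serious obstacle in this argument: the only point requiring care is the bookkeeping in the regrouping of the eight terms and the correct reading of the integrands of $\cL$ and $\cQ$ from their explicit formulas. The conceptual content is entirely carried by the collision invariance of the elements of $\Ker\cL$, which turns the comparison into the factorization $(\phi+\phi_*)(\psi+\psi_*)=(\phi'+\phi'_*)(\psi'+\psi'_*)$.
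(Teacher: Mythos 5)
Your proof is correct. The paper does not actually carry out this computation itself---it only points to formula (60) of \cite{BGL1} and notes the sign and normalization conventions---and your argument supplies precisely the standard verification behind that reference: with the paper's explicit formulas for $\cL$ and $\cQ$, the integrands of $\cL(\phi\psi)$ and $2\cQ(\phi,\psi)$ differ by $(\phi+\phi_*)(\psi+\psi_*)-(\phi'+\phi'_*)(\psi'+\psi'_*)$, which vanishes because every element of $\Ker\cL=\Span\{1,v_1,v_2,v_3,|v|^2\}$ is a collision invariant. Your side remarks are also accurate: absolute convergence is immediate since $\phi\psi$ is a polynomial integrated against $M_*$ with the kernel $|(v-v_*)\cdot\om|$, and the Maxwellian identity $MM_*=M'M'_*$ is indeed not needed because the weight $M_*$ is common to both integrands.
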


See formula (60) in \cite{BGL1} (one should notice the slightly different definitions of $\cL$ and $\cQ$ in formula (20) of \cite{BGL1}, which account for the different sign and normalizing factor $\tfrac12$).

Since $g(x,\cdot)\in\Ker\cL$, Lemma \ref{L-QKerL} implies that $\cQ(g,g)=\tfrac12\cL(g^2)$, so that
$$
Z_\eps e^{\eps\Phi(x)}\frac1\eps\cL g_\eps\to\tfrac12\cL(g^2)-v\cdot\grad_xg\,.
$$
On the other hand, (\ref{Limg}) and (\ref{Bouss}) imply that
\be\lb{Limg2}
g=\overline{\th}+u\cdot v+\th\tfrac12(|v|^2-5)
\ee
so that
$$
\ba
g^2=&A(u):A(v)+2\th u\cdot B(v)+\tfrac13|v|^2|u|^2+\tfrac14\th^2(|v|^2-5)^2
\\
&+\overline{\th}^2+2\overline{\th}u\cdot v+\overline{\th}\th(|v|^2-5)\,,
\ea
$$
while
$$
v\cdot\grad_xg=A(v):\grad_xu+B(v)\cdot\grad_x\th\,,
$$
since $u$ is divergence-free. Here, the notation $B(v)$ designates the vector field
$$
B(v):=\tfrac12(|v|^2-5)v\,,
$$
whose properties are recalled in the Appendix.

Therefore
$$
\ba
\frac1\eps\la A(v)g_\eps\ra=&\La Z_\eps^{-1}e^{-\eps\Phi(x)}\hat A(v)Z_\eps e^{\eps\Phi(x)}\frac1\eps\cL g_\eps\Ra
\\
\to&\la(\tfrac12 A(v)g^2-\hat A(v)v\cdot\grad_xg)\ra
\\
=&A(u):\la\tfrac12 A(v)^{\otimes 2}\ra
\\
&-\grad u:\la\hat A(v)\otimes A(v)\ra
\\
=&A(u)-\nu(\grad u+(\grad u)^T)\,,
\ea
$$
on account of (\ref{Incompr}) and of the identities
$$
\left\{
\ba
\la A_{ij}A_{kl}\ra&=\de_{ik}\de_{jl}+\de_{il}\de_{jk}-\tfrac23\de_{ij}\de_{kl}\,,
\\
\la\hat A_{ij}A_{kl}\ra&=\nu(\de_{ik}\de_{jl}+\de_{il}\de_{jk}-\tfrac23\de_{ij}\de_{kl})\,,
\ea
\right.
$$
with
\be\lb{FlaNu}
\nu:=\tfrac1{10}\la\hat A:A\ra\,.
\ee
(See statement (2) in Lemma \ref{L-IntPtyAAhatBBhat}.)

Therefore
$$
\Div(A(u)-\nu(\grad u+(\grad u)^T))+\grad q=(\th-\overline{\th})\grad\Phi+f_s\,,
$$
which can be recast as
$$
\Div(u\otimes u)-\nu\Dlt u+\grad p=\th\grad\Phi(x)+f_s(x)
$$
where $p=q-\tfrac13|u|^2+\overline{\th}\Phi$. This is precisely the motion equation in the Navier-Stokes-Fourier system. Notice that the term
$$
\frac1\eps\la\tfrac13|v|^2g_\eps\ra
$$
appearing on the left hand side of the equality (\ref{MomCvg}) does not converge in the sense of distributions in general, but its gradient does. Define
$$
T:=\lim_{\eps\to 0}\grad\frac1\eps\la\tfrac13|v|^2g_\eps\ra\,.
$$
For each compactly supported, divergence free test vector field $\xi\equiv\xi(x)$, one has
$$
\ba
\la T,\xi\ra&=\lim_{\eps\to 0}\int_{\bR^3}\xi\cdot\grad\left(\frac1\eps\la\tfrac13|v|^2g_\eps\ra\right)dx
\\
&=-\lim_{\eps\to 0}\int_{\bR^3}\Div\xi\left(\frac1\eps\la\tfrac13|v|^2g_\eps\ra\right)dx=0\,.
\ea
$$
By Theorem 17' in \cite{DR}, $T$ viewed as a $1$-current is homologous to $0$, which means precisely that $T=\grad q$ for some distribution $q$.

\smallskip
\noindent
\textit{Step 4: the heat conduction equation}

Next we explain how to derive the heat conduction equation. Mutiplying both sides of the scaled Boltzmann equation by $\frac1\eps M\tfrac12(|v|^2-5)$ and integrating in $v$ shows that
$$
\ba
\Div_x\frac1\eps\la B(v)g_\eps\ra=&-\grad\Phi(x)\cdot\la vg_\eps\ra
\\
&+f_s(x)\cdot\la v(Z_\eps e^{\eps\Phi(x)}+\eps g_\eps)\ra
\\
\to&-u\cdot\grad\Phi(x)\,.
\ea
$$
On the other hand
$$
\ba
\frac1\eps\la B(v)g_\eps\ra=&\La\hat B(v)\frac1\eps\cL g_\eps\Ra
\\
=&\La Z_\eps^{-1}e^{-\eps\Phi(x)}\hat B(v)Z_\eps e^{\eps\Phi(x)}\frac1\eps\cL g_\eps\Ra
\\
\to&\la\hat B(v)\left(\tfrac12\cL(g^2)-v\cdot\grad_xg\right)\ra
\\
=&\la\tfrac12 B(v)g^2-\hat B(v)v\cdot\grad_xg\ra
\\
=&\la B(v)^{\otimes 2}\ra\cdot u\th
\\
&-\la\hat B(v)\otimes B(v)\ra\cdot\grad\th
\\
=&\tfrac52 u\th-\ka\grad\th\,.
\ea
$$
Indeed,
$$
\left\{
\ba
\la B_iB_j\ra=\tfrac52\de_{ij}\,,
\\
\la\hat B_iB_j\ra=\ka\de_{ij}\,,
\ea
\right.
$$
with
\be\lb{FlaKa}
\ka=\la\hat B\cdot B\ra\,.
\ee
(See statement (1) in Lemma \ref{L-IntPtyAAhatBBhat}.)

Therefore
$$
\tfrac52\Div(u\th)-\ka\Dlt\th=-u\cdot\grad\Phi\,,
$$
which is precisely the heat conduction equation in the Navier-Stokes-Fourier system.
\end{proof}


\bs{2.2. From Kinetic Theory to Viscous Heating}

In the asymptotic Navier-Stokes-Fourier regime discussed above, the fluctuations of velocity field and of temperature are small and of the same order $O(\eps)$. In this case, the fluctuation of kinetic energy is negligible when compared 
to the fluctuation of internal energy. In order to keep both fluctuations small and of the same order, it is natural to scale the fluctuation of velocity field as $O(\eps)$, while the fluctuation of temperature should be of order $O(\eps^2)$.

At the level of the Boltzmann equation, this scaling assumption is obtained by choosing the distribution function of the form
\be\lb{ScalDist2}
F_\eps(x,v)=M(1+\eps g_\eps(x,v)+\eps^2h_\eps(x,v))
\ee
where
\be\lb{EvOdd}
g_\eps(x,v)=-g_\eps(x,-v)\,,\quad\hbox{ while }\quad h_\eps(x,v)=h_\eps(x,-v)
\ee
for a.e. $(x,v)$. Here, we assume that there is no conservative force, i.e. we take the potential $\Phi$ identically $0$. The total external force acting on the gas is therefore $f_s\equiv f_s(x)$ such that $\Div f_s=0$.

The dimensionless Boltzmann equation (\ref{DimLessB}) is scaled as follows:
\be\lb{Scal2}
\Ma=\Kn=\eps\,,\quad\hbox{ and }\Fr=1\,.
\ee
In other words, the scaled Boltzmann equation takes the form
\be\lb{ScalB2}
v\cdot\grad_xF_\eps+\eps^2f_s(x)\cdot\grad_vF_\eps=\frac1{\eps}\cC(F_\eps)\,.
\ee

The exposition in this section follows closely \cite{BLUY}, where the idea of the even-odd decomposition of the distribution function seems to have been used for the first time.

First, we express the local balance laws of mass, momentum and energy in terms of the fluctuations $g_\eps$ and $h_\eps$. The odd contributions of either $g_\eps$ or $h_\eps$ vanish after integration in $v$, so that
$$
\int_{\bR^3}F_\eps dv=1+\eps^2\la h_\eps\ra\,,\qquad\int_{\bR^3}vF_\eps dv=\eps\la vg_\eps\ra\,,
$$
while
$$
\ba
\int_{\bR^3}v\otimes vF_\eps dv&=I+\eps^2\la v\otimes vh_\eps\ra\,,
\\
\int_{\bR^3}v|v|^2F_\eps dv&=\eps\la v|v|^2g_\eps\ra\,.
\ea
$$
Hence, the local balance laws of mass, momentum and energy implied by the Boltzmann equation take the form
$$
\Div_x\la vg_\eps\ra=0\,,\qquad\hbox{ (mass) }
$$
while
$$
\Div_x\la v\otimes vh_\eps\ra=f_s+\eps^2f_s\la h_\eps\ra\,,\qquad\hbox{ (momentum) }
$$
and
$$
\Div_x\la v\tfrac12|v|^2g_\eps\ra=\eps^2f_s\cdot\la vg_\eps\ra\,.\qquad\hbox{ (energy) }
$$

Likewise, both sides of the Boltzmann equation are decomposed into even and odd components, observing that, for each rapidly decaying $F$
$$
\cC(F\circ R)=\cC(F)\circ R\,,\quad\hbox{Êfor all }R\in O_3(\bR)\,.
$$
Since the Maxwellian $M$ is a radial function, one has $M\circ R=M$, and therefore
$$
\cL(\phi\circ R)=(\cL\phi)\circ R\,,\qquad\cQ(\phi\circ R,\psi\circ R)=\cQ(\phi,\psi)\circ R
$$ 
for all $R\in O_3(\bR)$ and all rapidly decaying $\phi,\psi$. These identities are satisfied in particular for $R=-I$. Hence the even and odd components of $\cC(F_\eps)$ are respectively
$$
\ba
\cC(F_\eps)^{\hbox{Êeven}}&=-\eps^2M\cL h_\eps+\eps^2M\cQ(g_\eps,g_\eps)+\eps^4M\cQ(h_\eps,h_\eps)\,,
\\
\cC(F_\eps)^{\hbox{Êodd }}&=-\eps M\cL g_\eps+2\eps^3M\cQ(g_\eps,h_\eps)\,.
\ea
$$
Therefore, the scaled Boltzmann equation is equivalent to the system
\be\lb{ScalBoltzeo}
\left\{
\ba
{}&v\cdot\grad_xg_\eps+\eps^2M^{-1}f_s\cdot\grad_v(Mg_\eps)=-\cL h_\eps+\cQ(g_\eps,g_\eps)+\eps^2\cQ(h_\eps,h_\eps)\,,
\\	\\
&\eps^2v\cdot\grad_xh_\eps-\eps^2v\cdot f_s+\eps^4M^{-1}f_s\cdot\grad_v(Mh_\eps)=-\cL g_\eps+2\eps^2\cQ(g_\eps,h_\eps)\,.
\ea
\right.
\ee

\begin{theorem}\lb{T-NSFVH}
Let $F_\eps$ be a family of solutions of the scaled Boltzmann equation (\ref{ScalB2}), whose relative fluctuations $g_\eps,h_\eps$ defined in (\ref{ScalDist2})-(\ref{EvOdd}) satisfy
$$
g_\eps\to g\,,\qquad h_\eps\to h
$$
weakly in $L^2(\Om\times\bR^3;(1+|v|^2)Mdvdx)$, and
$$
\cQ(g_\eps,g_\eps)\to\cQ(g,g)\,,\qquad\cQ(g_\eps,h_\eps)\to\cQ(g,h)
$$
weakly in $L^1(\Om;L^2(\bR^3;(1+|v|^2)Mdv))$, while
$$
\eps^2\la h_\eps\grad\phi(v)\ra\to 0
$$
weakly in $L^2(\Om)$ for each $\phi\in L^2(\bR^3;(1+|v|^2)Mdv)$.

Then
\be\lb{Limg3}
g(x,v)=u(x)\cdot v\,,
\ee
while
\be\lb{Limh}
\ba
h(x,v)=&\tfrac12A(u(x)):A(v)-\hat A(v):\grad_xu(x)
\\
&+\rho(x)+(\th(x)+\tfrac13|u(x)|^2)\tfrac12(|v|^2-3)
\ea
\ee
where $(u,\th)$ is a solution of the incompressible Navier-Stokes-Fourier system with viscous heating
\be\lb{NSFVH}
\left\{
\ba
{}&\Div u=0\,,
\\	\\
&\Div(u^{\otimes 2})+\grad p=\nu\Dlt u+f_s\,,
\\	\\
&\tfrac52\Div(u\th)-u\cdot\grad_xp=\ka\Dlt\th+\tfrac12\nu\left|\grad_xu+(\grad_xu)^T\right|^2\,.
\ea
\right.
\ee
The values of the viscosity $\nu$ and heat diffusivity $\ka$ are determined implicitly in terms of the collision integral, by formulas (\ref{FlaNu}) and (\ref{FlaKa}), as in Theorem \ref{T-NSF}.
\end{theorem}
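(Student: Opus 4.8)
The plan is to mirror the four-step structure of the proof of Theorem~\ref{T-NSF}, now organized around the even/odd system (\ref{ScalBoltzeo}). \textit{Step 1 (leading forms).} Passing to the limit in the odd equation of (\ref{ScalBoltzeo}), using the weak convergences in the hypotheses, gives $\cL g=0$; since $g$ is odd, $g\in\Ker\cL\cap\{\text{odd}\}=\Span\{v_1,v_2,v_3\}$, whence $g(x,v)=u(x)\cdot v$, which is (\ref{Limg3}), and taking the limit in the mass balance $\Div_x\la vg_\eps\ra=0$ yields $\Div u=0$. \textit{Step 2 (form of $h$).} Passing to the limit in the even equation gives $\cL h=\cQ(g,g)-v\cdot\grad_xg$. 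Since $g\in\Ker\cL$, Lemma~\ref{L-QKerL} turns $\cQ(g,g)$ into $\tfrac12\cL(g^2)$, while $v\cdot\grad_xg=A(v):\grad_xu$ (using $\Div u=0$) lies in $\Ran\cL$ because $A\in(\Ker\cL)^\perp$. Writing $A(v)=\cL\hat A(v)$ I invert $\cL$ modulo $\Ker\cL$; expanding $g^2=A(u):A(v)+\tfrac13|u|^2|v|^2$ and isolating its kernel part (the $\tfrac16|u|^2(|v|^2-3)$ piece is what produces the $\tfrac13|u|^2$ shift), then discarding the odd kernel component by the parity constraint (\ref{EvOdd}), gives exactly (\ref{Limh}) with the isotropic part carrying $\th+\tfrac13|u|^2$.

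\textit{Step 3 (momentum equation).} This is essentially identical to Theorem~\ref{T-NSF}. I pass to the limit in the momentum balance $\Div_x\la v\otimes vh_\eps\ra=f_s+\eps^2f_s\la h_\eps\ra$, split $v\otimes v=A(v)+\tfrac13|v|^2I$, and evaluate $\la A(v)h\ra$ with the tensor identities $\la A_{ij}A_{kl}\ra$ and $\la\hat A_{ij}A_{kl}\ra$ of Lemma~\ref{L-IntPtyAAhatBBhat}: the $\tfrac12A(u):A(v)$ part of $h$ contributes $A(u)$, the $-\hat A:\grad_xu$ part contributes $-\nu(\grad u+(\grad u)^T)$, and the kernel part contributes nothing. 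Absorbing $\grad_x\la\tfrac13|v|^2h\ra$ and the gradient pieces of $\Div A(u)-\Div(u\otimes u)$ into the pressure gives the second line of (\ref{NSFVH}).

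\textit{Step 4 (temperature equation with viscous heating).} This is the crux and the only genuinely new computation. The leading-order energy moments are trivial: $\la Bg\ra=0$ because $\hat B\perp\Ker\cL$ forces $\la B_iv_j\ra=\la\cL\hat B_i\,v_j\ra=0$, and $\Div_x\la vg_\eps\ra=0$ exactly, so the equation must be read at order $\eps^2$. I use the self-adjointness of $\cL$ to write $\tfrac1{\eps^2}\la Bg_\eps\ra=\La\hat B\,\tfrac1{\eps^2}\cL g_\eps\Ra$, substitute $\tfrac1{\eps^2}\cL g_\eps$ from the odd equation of (\ref{ScalBoltzeo}), and pass to the limit (the force term drops by the hypothesis $\eps^2\la h_\eps\grad\phi\ra\to0$, and $\la\hat B_i\,v\cdot f_s\ra=0$ again because $\la\hat B_iv_j\ra=0$), obtaining $\tfrac1{\eps^2}\la Bg_\eps\ra\to\la\hat B(-v\cdot\grad_xh+2\cQ(g,h))\ra=:\Psi$. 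Combined with the energy balance (whose enthalpy flux $\tfrac52\la vg_\eps\ra$ has vanishing divergence) this gives $\Div\Psi=\lim f_s\cdot\la vg_\eps\ra=f_s\cdot u$. Evaluating $\Psi$ term by term: the $(\th+\tfrac13|u|^2)\tfrac12(|v|^2-3)$ part of $h$ produces the conduction flux $-\ka\grad(\th+\tfrac13|u|^2)$ through $\la\hat B_iv_k\tfrac12(|v|^2-3)\ra=\ka\de_{ik}$; splitting $h=h_{\ker}+h_\perp$ and applying Lemma~\ref{L-QKerL} to $\cQ(g,h_{\ker})$ produces the convective enthalpy flux $\propto u(\th+\tfrac13|u|^2)$; and the quadratic part $\tfrac12A(u):A(v)$ of $h_\perp$, entering both through $\cQ(g,h_\perp)$ and through $\la\hat B\,v\cdot\grad_xh\ra$ acting on $-\hat A:\grad_xu$, is the source of the viscous heating.

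\textit{Main obstacle.} The difficulty is entirely in the Step~4 bookkeeping: one must check that the $|u|^2$-dependent fluxes arising from the $\tfrac13|u|^2$ shift in $h$, from $\cQ(g,h_{\ker})$, and from the quadratic term, together with the source $f_s\cdot u$ and the motion equation of Step~3, reorganize into exactly $\tfrac52\Div(u\th)-u\cdot\grad_xp$ on the left and $\ka\Dlt\th+\tfrac12\nu|\grad_xu+(\grad_xu)^T|^2$ on the right. The cleanest way to fix the heating term is the energy-subtraction viewpoint: dotting the motion equation with $u$ and using $\Div u=0$ rewrites $-\nu u\cdot\Dlt u$ as an exact flux minus $\nu|\grad u|^2$, so the mechanical dissipation removed from the kinetic-energy balance reappears as heating in the internal-energy balance, while the identity $\tfrac12|\grad u+(\grad u)^T|^2=|\grad u|^2+\Div((u\cdot\grad)u)$, valid when $\Div u=0$, reconciles $\nu|\grad u|^2$ with $\tfrac12\nu|\grad u+(\grad u)^T|^2$ modulo a divergence. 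Verifying that the spurious $\ka\grad\tfrac13|u|^2$ in the conduction flux and the various $\Div(|u|^2u)$ contributions cancel against each other is the most delicate point.
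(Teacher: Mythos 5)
Your proposal follows essentially the same route as the paper: the same four-step structure built on the even/odd system (\ref{ScalBoltzeo}), the same Fredholm inversion giving (\ref{Limg3}) and (\ref{Limh}) with the $\tfrac13|u|^2$ shift, the same moment computations with $\hat A$ and $\hat B$, and the same concluding device of subtracting the kinetic-energy balance (motion equation dotted with $u$) to convert the mechanical dissipation into the viscous heating term. Your diagnosis of where the difficulty lies (the Step 4 bookkeeping and the cancellation of the $|u|^2$-dependent fluxes) matches the paper exactly; in the paper the cancellation happens between the $\tfrac25\ka(\grad u+(\grad u)^T)\cdot u$ piece of the convective flux and the second-derivative term $\grad\otimes\grad:(\tfrac25\ka A(u)+\tfrac13\ka|u|^2 I)$, using $\grad\otimes\grad:(A(u)+\tfrac56|u|^2I)=\Div((\grad u+(\grad u)^T)u)$ when $\Div u=0$.

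The one ingredient your plan does not supply is how to evaluate $\cQ(g,\cdot)$ when the second argument is \emph{not} in $\Ker\cL$: Lemma~\ref{L-QKerL} handles $\cQ(g,h_{\ker})$ only, whereas the terms $\cQ(u\cdot v,\tfrac12A(u):A(v))$ and $\cQ(u\cdot v,\hat A(v):\grad_xu)$ --- which produce, respectively, the $\tfrac12|u|^2u$ correction to the enthalpy flux and the $(\nu-\tfrac25\ka)(\grad_xu+(\grad_xu)^T)\cdot u$ flux --- require the additional identities the paper imports from \cite{BLUY} (formula (2.18c) and Proposition 2.6), together with the cubic moment Lemma~\ref{L-IntBC}. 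Without these your Step 4 cannot actually be carried out, so you should cite or prove analogues of them; with them, your argument is the paper's.
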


\begin{proof}
The proof follows more or less the same lines as that of Theorem \ref{T-NSF}; see also \cite{BLUY}.

\smallskip
\noindent
\textit{Step 1: asymptotic form of $g_\eps$ and divergence-free condition}

We first deduce from the second equation in (\ref{ScalBoltzeo}) and the assumption that $g_\eps\to g$ in the sense of distributions that
$$
\cL g=0\,.
$$
Hence $g(x,\cdot)\in\Ker\cL$ and $v\mapsto g(x,\cdot)$ is odd for a.e. $x$, so that $g$ is of the form (\ref{Limg3}). The local conservation of mass implies that
$$
0=\Div_x\la vg_\eps\ra\to\Div_x\la vg\ra=\Div_xu
$$
in the sense of distributions, so that
\be\lb{Incompr2}
\Div_xu=0\,.
\ee

\smallskip
\noindent
\textit{Step 2: asymptotic form of $h_\eps$ and divergence-free condition}

Next we deduce from the first equation in (\ref{ScalBoltzeo}) that
$$
h_\eps\to h\quad\hbox{ with }v\cdot\grad_xg=-\cL h+\cQ(g,g)\,.
$$
Since $g(x,\cdot)\in\Ker\cL$, applying Lemma \ref{L-QKerL} shows that
$$
\cL(h-\tfrac12g^2)=-v\cdot\grad_xg=-v\otimes v:\grad_xu=-A(v):\grad_xu\,,
$$
since $u$ is divergence free by (\ref{Incompr2}). Hence
$$
h(x,v)=\tfrac12g^2(x,v)-\hat A(v):\grad_xu+h_0(x,v)\,,
$$
with $h_0(x,\cdot)\in\Ker\cL$ for a.e. $x$. Since $h_0$ is even in $v$, it is of the form
\be\lb{h0}
h_0(x,v)=\varpi(x)+\th(x)\tfrac12(|v|^2-3)\,.
\ee
On the other hand
$$
\ba
g^2(x,v)&=(u(x)\cdot v)^2=u(x)\otimes u(x):v\otimes v
\\
&=A(u(x)):A(v)+\tfrac13|u(x)|^2|v|^2\,,
\ea
$$
so that $h$ is of the form 
\be\lb{Limh2}
\ba
h(x,v)=&\tfrac12A(u(x)):A(v)-\hat A(v):\grad_xu(x)
\\
&+\rho(x)+(\th(x)+\tfrac13|u(x)|^2)\tfrac12(|v|^2-3)\,,
\ea
\ee
with $\rho=\varpi+\tfrac16|u|^2$. 

\smallskip
\noindent
\textit{Step 3: motion equation}

Passing to the limit in the sense of distributions in the local balance law of momentum shows that
$$
\Div_x\la v\otimes vh\ra=f_s\,.
$$
We insert the expression for $h$ found above in this identity. Observe that
$$
\ba
\la v\otimes vh\ra=&\la A(v)(\tfrac12A(u(x)):A(v)-\hat A(v):\grad_xu(x))\ra
\\
&+\la\tfrac13|v|^2\left(\rho(x)+(\th(x)+\tfrac13|u(x)|^2)\tfrac12(|v|^2-3)\right)\ra I
\ea
$$
because $A$ and $\hat A\perp\Ker\cL$. By statements (1)-(2) in Lemma \ref{L-IntPtyAAhatBBhat} 
$$
\la A(v)(\tfrac12A(u(x)):A(v)-\hat A(v):\grad_xu(x))\ra=A(u(x))-\nu(\grad_xu+(\grad_xu)^T)\,,
$$
while
$$
\la\tfrac13|v|^2\left(\rho(x)+(\th(x)+\tfrac13|u(x)|^2)\tfrac12(|v|^2-3)\right)\ra=\rho(x)+\th(x)+\tfrac13|u(x)|^2\,.
$$
Hence
$$
\Div_x\left(A(u)-\nu(\grad_xu+(\grad_xu)^T)\right)+\grad_x(\rho+\th+\tfrac13|u|^2)=f_s\,,
$$
or equivalently
\be\lb{NSEq2}
\Div_x(u\otimes u)-\nu\Dlt_xu+\grad_xp=f_s
\ee
with $p=\rho+\th$.

\smallskip
\noindent
\textit{Step 4: heat equation}

Finally, we combine the local balance of mass and energy so that
$$
\Div_x\frac1{\eps^2}\la Bg_\eps\ra=f_s\cdot\la vg_\eps\ra\to f_s\cdot u
$$
in the sense of distributions. 

Next, we transform the left hand side of the equality above by exactly the same method as in the previous section:
$$
\ba
\frac1{\eps^2}\la Bg_\eps\ra&=\La\hat B\frac1{\eps^2}\cL g_\eps\Ra
\\
&=\la\hat B(2\cQ(g_\eps,h_\eps)-v\cdot\grad_xh_\eps-\eps^2M^{-1}f_s\cdot\grad_v(Mh_\eps))\ra
\\
&\to\la\hat B(2\cQ(g,h)-v\cdot\grad_xh)\ra\,.
\ea
$$
Notice that
$$
\int_{\bR^3}\hat B f_s\cdot\grad_vMdv=-\la\hat B f_s\cdot v\ra=0\,,
$$
because $\hat B\perp\Ker\cL$. Hence
$$
\Div_x\la\hat B(2\cQ(g,h)-v\cdot\grad_xh)\ra=f_s\cdot u\,,
$$
and we insert in this last expression the explicit formulas for $g$ and $h$. 

First
\be\lb{Qgh=}
\ba
2\cQ(g,h)=&2\cQ(u\cdot v,\tfrac12A(u):A(v)+\rho+(\th+\tfrac13|u|^2)\tfrac12(|v|^2-3))
\\
&-2\cQ(u\cdot v,\hat A(v):\grad_xu)\,.
\ea
\ee

Applying formula (2.18c) in \cite{BLUY} shows that
$$
\ba
2\cQ(u\cdot v,\tfrac12A(u):A(v)+\rho+(\th+\tfrac13|u|^2)\tfrac12(|v|^2-3))&
\\
=\cL(\th u\cdot B(v)+\tfrac13C(v):u\otimes u\otimes u)&\,,
\ea
$$
where
$$
C(v):=\tfrac12(v\otimes v\otimes v-3v\otimes I)\,.
$$
Hence
\be\lb{Qhg1}
\ba
2\la\hat B(v)\cQ(u\cdot v,\tfrac12A(u):A(v)+\rho+(\th+\tfrac13|u|^2)\tfrac12(|v|^2-3))\ra
\\
=\la\hat B(v)\cL(\th u\cdot B(v)+\tfrac13C(v):u\otimes u\otimes u)\ra
\\
=\la B(v)(\th u\cdot B(v)+\tfrac13C(v):u\otimes u\otimes u)\ra
\\
=\tfrac12(|u|^2+5\th)u
\ea
\ee
by Lemma \ref{L-IntBC} and statement (1) in Lemma \ref{L-IntPtyAAhatBBhat}.

By Proposition 2.6 in \cite{BLUY} and statements (1) and (3) of Lemma \ref{L-IntPtyAAhatBBhat},
\be\lb{Qhg2}
\ba
2\la\hat B(v)\cQ(u\cdot v,\hat A(v):\grad_xu)\ra=&\la(A(v)\cdot u)\hat A(v):\grad_xu\ra&
\\
&-\la\hat B(v)(u\cdot v)A(v):\grad_xu\ra&
\\
=&(\nu-\tfrac25\ka)(\grad_xu+(\grad_xu)^T)\cdot u&\,.
\ea
\ee

Finally, we compute
\be\lb{Qhg3}
\ba
\la\hat B(v)\otimes v h\ra=&\tfrac12\la\hat B(v)\otimes v\otimes A(v)\ra:A(u)
\\
&-\la\hat B(v)\otimes v\otimes\hat A(v)\ra:\grad_xu
\\
&+(\th+\tfrac13|u|^2)\la\hat B(v)\otimes B(v)\ra
\\
=&\tfrac25\ka A(u)-c(\grad_xu+(\grad_xu)^T)+\ka(\th+\tfrac13|u|^2)I
\ea
\ee
according to statements (1) and (3) of Lemma \ref{L-IntPtyAAhatBBhat}. Notice that 
$$
\rho\la\hat B\otimes v\ra=0\,,
$$
and that
$$
\la\hat B\otimes v\tfrac12(|v|^2-3)\ra=\la\hat B\otimes B\ra\,,
$$
because $\hat B\perp\Ker\cL$.

Therefore, putting together (\ref{Qhg1})-(\ref{Qhg2})-(\ref{Qhg3}), we arrive at the identity
\be\lb{EnergyBal2}
\ba
\Div_x\la\hat B(2\cQ(g,h)-v\cdot\grad_xh)\ra&
\\
=\Div_x(\tfrac12(|u|^2+5\th)u-(\nu-\tfrac25\ka)(\grad_xu+(\grad_xu)^T)\cdot u)&
\\
-\grad_x\otimes\grad_x:(\tfrac25\ka A(u)-c(\grad_xu+(\grad_xu)^T)+\ka(\th+\tfrac13|u|^2)I)&\,.
\ea
\ee

This identity can be substantially simplified, as follows. First,
$$
\grad_x\otimes\grad_x:\left(\grad_xu+(\grad_xu)^T\right)=0
$$
because of the divergence-free condition (\ref{Incompr2}). On the other hand,
$$
\Div_x\left((\grad_xu+(\grad_xu)^T)\cdot u)\right)=\grad_x\otimes\grad_x:u\otimes u+\Dlt_x\tfrac12|u|^2\,,
$$
while
$$
\ba
\grad_x\otimes\grad_x:(A(u)+\tfrac56|u|^2I)&=\grad_x\otimes\grad_x:u\otimes u-\tfrac13\Dlt_x|u|^2+\tfrac56\Dlt_x|u|^2
\\
&=\grad_x\otimes\grad_x:u\otimes u+\Dlt_x\tfrac12|u|^2\,.
\ea
$$
Hence
$$
\ba
\Div_x\la\hat B(2\cQ(g,h)-v\cdot\grad_xh)\ra&
\\
=\Div_x\left(\tfrac12(|u|^2+5\th)u-\nu\left(\grad_xu+(\grad_xu)^T\right)\cdot u\right)-\ka\Dlt_x\th&\,,
\ea
$$
so that the limiting form of the local energy balance is
$$
\Div_x\left(\tfrac12(|u|^2+5\th)u\right)=\nu\Div_x((\grad_xu+(\grad_xu)^T)\cdot u)+\ka\Dlt_x\th+f_s\cdot u\,.
$$
On the other hand, multiplying both sides of the Navier-Stokes motion equation by $u$, we arrive at the identity
$$
\ba
\Div_x\left(u\tfrac12|u|^2\right)+u\cdot\grad_xp&=f_s\cdot u+\nu u\cdot\Dlt_xu
\\
&=f_s\cdot u+\nu u\cdot\Div_x\left(\grad_xu+(\grad_xu)^T\right)\,.
\ea
$$
We further simplify the right hand side of the equality above as follows:
$$
\ba
u\cdot\Div_x\left(\grad_xu+(\grad_xu)^T\right)=&\Div_x\left(\left(\grad_xu+(\grad_xu)^T\right)u\right)
\\
&-\left(\grad_xu+(\grad_xu)^T\right):\grad_xu
\\
=&\Div_x\left(\left(\grad_xu+(\grad_xu)^T\right)u\right)
\\
&-\tfrac12\left|\grad_xu+(\grad_xu)^T\right|^2\,.
\ea
$$
Combining these identities with (\ref{EnergyBal2}) above leads to
\be\lb{TempVH}
\tfrac52\Div_x(u\th)-u\cdot\grad_xp=\ka\Dlt_x\th+\tfrac12\nu\left|\grad_xu+(\grad_xu)^T\right|^2\,.
\ee
\end{proof}

Observe that the Navier-Stokes motion equation is exactly the same when $\Phi=0$ for both scaling assumptions (\ref{Scal1}) and (\ref{Scal2}). The temperature equation, however, is very different according to whether the Froude
number is $O(\sqrt{\eps})$ as in (\ref{Scal1}), or $O(1)$ as in (\ref{Scal2}).

The viscous heating term on the right hand side of the equation governing the temperature field appears in Sone's asymptotic analysis of the hydrodynamic limits of the Boltzmann equation in the weakly nonlinear regime --- see 
\cite{SoneBook2}. Sone's original work \cite{Sone71} on the weakly nonlinear hydrodynamic limits of kinetic theory was written in the case of the BGK model; see \cite{SoneAoki87}Ê for the extension to the Boltzmann equation. 
Sone's argument is based on the Hilbert expansion. One should pay attention to a particular feature of Sone's theory: the pressure field $p$ in (\ref{NSF}) and the velocity and temperature fields $u$ and $\th$ in (\ref{NSF}) do not
appear at the same order in Sone's expansion; in fact $p$ appears at order $O(\eps^2)$ in the expansion of the distribution function in powers of $\eps$, while $u$ and $\th$ appear at order $O(\eps)$ in that same expansion.
See formulas (3.77), (3.79b-c), (3.80d) and (3.88a-c) in section 3.2.2 of \cite{SoneBook2}. The limit leading to (\ref{NSFVH}) corresponds to a situation where $u$ appears at order $O(\eps)$ in Sone's expansion, while the leading
order temperature fluctuation (denoted $\tau_{S1}$ in Sone's analysis, see formula (3.79c) in \cite{SoneBook2}) is identically zero. The temperature fluctuation appears at order $O(\eps^2)$ in Sone's expansion, together with 
the pressure field $p$, and the temperature equation in (\ref{NSFVH}) coincides with formula (3.89c) in section 3.2.2 of \cite{SoneBook2}. (Sone's analysis in \cite{SoneBook2} does not involve an external force, but the work of the
external force disappears from the temperature equation when combining the motion equation and the energy equation as explained above.)


\bs{2.3. Boundary Conditions}

The discussion of the incompressible Navier-Stokes-Fourier limit of the Boltzmann equation presented above would remain incomplete without discussing the boundary condition. In this section, we briefly describe the simplest 
imaginable situation.

Assume that the scaled Boltzmann equation (\ref{ScalB1}) is supplemented with a diffuse reflection condition at the boundary of the spatial domain $\Om$ on which the Boltzmann equation (\ref{DimLessB}) is posed. In other words,
for each $x\in\d\Om$, one has
\be\lb{DiffReflex}
F_\eps(x,v)=\sqrt{2\pi}M\int_{\bR^3}F_\eps(x,v)(v\cdot n_x)_+dv\,,\qquad v\cdot n_x<0\,,
\ee
where $x\mapsto n_x$ is the unit normal field defined on the boundary $\d\Om$ of the spatial domain. Here, we have assumed for simplicity that there is no temperature gradient on $\d\Om$. The constant temperature at the boundary 
(i.e. the temprature $1$ in the Maxwellian state $M$) defines the scale of the speed of sound in the interior of the domain. 

By construction
$$
\int_{\bR^3}F_\eps(x,v)v\cdot n_xdv=0\,,\qquad x\in\d\Om\,,
$$
which means that the net mass flux at each point $x\in\d\Om$ is identically $0$. This suggests that the boundary condition (\ref{DiffReflex}) should be supplemented with the additional condition
\be\lb{TMass}
\iint_{\Om\times\bR^3}F_\eps(x,v)dxdv=|\Om|\,,
\ee
that is consistent to leading order with the normalization of $Z_\eps$ in (\ref{ScalDist1})-(\ref{ScalDist2}), and is equivalent to the condition (\ref{TMassFluct}) already introduced above in the case $\Ma=\Kn=\Fr^2=\eps$, and
$$
\int_\Om\la h_\eps\ra dx=0
$$
in the case $\Ma=\Kn=\Fr=\eps$. (Notice that, in the latter case, $\la g_\eps\ra=0$ a.e. on $\Om$ since $g_\eps$ is odd in $v$.)

Besides, we assume that the force $f_s$ satisfies both
\be\lb{Condfs}
\Div_xf_s=0\quad\hbox{ on }\Om\,,\qquad f_s\cdot n_x=0\quad\hbox{ on }\d\Om\,.
\ee

Define
$$
\L_x(\phi):=\sqrt{2\pi}\la\phi(v\cdot n_x)_+\ra\,.
$$

\begin{theorem}
Let $\Ma=\Kn=\Fr^2=\eps$, and consider a family of solutions of the scaled Boltzmann equation (\ref{ScalB1}) supplemented with the diffuse reflection condition (\ref{DiffReflex}) and with the total mass condition (\ref{TMass}).
Assume that $F_\eps=M(Z_\eps e^{\eps\Phi}+\eps g_\eps)$ as in (\ref{ScalDist1}) and satisfies the same assumptions as in Theorem \ref{T-NSF}. Assume moreover that the family of traces of $g_\eps$ on the boundary 
$\d\Om\times\bR^3$ satisfies
$$
g_\eps\rstr_{\d\Om\times\bR^3}\to g\rstr_{\d\Om\times\bR^3}\quad\hbox{ weaklyÊ in }L^2(\d\Om\times\bR^3;|v\cdot n_x|MdvdS(x))
$$
where $g$ is such that $g_\eps\to g$ weakly in $L^2(\Om\times\bR^3,Mdvdx)$. Then
$$
g(x,v)=\overline{\th}+u(x)\cdot v+\th(x)\tfrac12(|v|^2-5)
$$
where 
$$
\overline{\th}=\frac1{|\Om|}\int_\Om\th(x)dx\,,
$$
and $(u,\th)$ is a solution of the Navier-Stokes-Fourier system (\ref{NSF}), with Diri- chlet boundary condition
$$
u\rstr_{\d\Om}=0\,,\qquad\th\rstr_{\d\Om}=0\,.
$$
\end{theorem}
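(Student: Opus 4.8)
The interior assertions---that $g$ has the stated form and that $(u,\th)$ solves the Navier-Stokes-Fourier system (\ref{NSF})---are exactly the conclusions of Theorem \ref{T-NSF}, whose hypotheses are all assumed here. The only genuinely new point is therefore the derivation of the homogeneous Dirichlet conditions $u\rstr_{\d\Om}=0$ and $\th\rstr_{\d\Om}=0$ from the diffuse reflection condition (\ref{DiffReflex}). The plan is to first rewrite (\ref{DiffReflex}) as a pointwise relation on the fluctuation $g_\eps$, then pass to the limit $\eps\to0$ with the help of the assumed weak convergence of the boundary traces, and finally exploit the fact that the limiting $g$ is a polynomial in $v$.

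First I would substitute $F_\eps=M(Z_\eps e^{\eps\Phi}+\eps g_\eps)$ into (\ref{DiffReflex}) and evaluate the outgoing mass flux on the right-hand side. Because $\L_x(1)=\sqrt{2\pi}\la(v\cdot n_x)_+\ra=1$, this flux equals $\tfrac1{\sqrt{2\pi}}(Z_\eps e^{\eps\Phi}+\eps\L_x(g_\eps))$, so the right-hand side of (\ref{DiffReflex}) reduces to $M(Z_\eps e^{\eps\Phi}+\eps\L_x(g_\eps))$. Cancelling the common factor $\eps M$ and the term $Z_\eps e^{\eps\Phi}$, the diffuse reflection condition becomes, for a.e. $x\in\d\Om$,
$$
g_\eps(x,v)=\L_x(g_\eps)\,,\qquad v\cdot n_x<0\,,
$$
whose right-hand side is a constant in $v$, depending only on $x$.

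Next I would pass to the limit in this identity using the assumed weak convergence of $g_\eps\rstr_{\d\Om\times\bR^3}$ in $L^2(\d\Om\times\bR^3;|v\cdot n_x|MdvdS(x))$. Testing the functional $x\mapsto\L_x(g_\eps)$ against an arbitrary $\psi\in L^2(\d\Om)$ amounts to integrating $g_\eps$ against $\sqrt{2\pi}\psi(x)\indc_{v\cdot n_x>0}$, which is an admissible test function for this topology since $\int_{\d\Om}\int_{v\cdot n_x>0}|v\cdot n_x|MdvdS(x)<\infty$; hence $\L_x(g_\eps)\wto\L_x(g)$ weakly in $L^2(\d\Om)$. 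The incoming part converges likewise, so in the limit
$$
g(x,v)=\L_x(g)\,,\qquad v\cdot n_x<0\,,\quad\hbox{ for a.e. }x\in\d\Om\,.
$$

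Finally I would insert $g(x,v)=\overline\th+u(x)\cdot v+\th(x)\tfrac12(|v|^2-5)$, which holds for every $v$ since $g(x,\cdot)\in\Ker\cL$. The displayed boundary relation then asserts that the polynomial $v\mapsto u(x)\cdot v+\tfrac12\th(x)|v|^2$ is constant on the open half-space $\{v\cdot n_x<0\}$; as a polynomial constant on a nonempty open set is constant on all of $\bR^3$, its linear and quadratic parts must vanish, forcing $u(x)=0$ and $\th(x)=0$ for a.e. $x\in\d\Om$. This is the claimed Dirichlet condition, and combined with Theorem \ref{T-NSF} it finishes the proof. I expect the only delicate step to be the passage to the limit in the reflection relation, i.e. the weak convergence $\L_x(g_\eps)\wto\L_x(g)$; this reduces to the square-integrability of the weight $(v\cdot n_x)_+$ against the outgoing boundary measure, which holds, after which the half-space rigidity argument applies directly to the limit.
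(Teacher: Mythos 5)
Your argument is correct and follows essentially the same route as the paper: rewrite the diffuse reflection condition as the fluctuation identity $g_\eps(x,v)=\L_x(g_\eps(x,\cdot))$ for $v\cdot n_x<0$, pass to the weak limit in the boundary traces, and then use the polynomial form of $g(x,\cdot)$ to force $u$ and $\th$ to vanish on $\d\Om$. The paper's own proof is considerably terser; your added details (the cancellation of the $Z_\eps e^{\eps\Phi}$ term using $\L_x(1)=1$, the admissibility of $\sqrt{2\pi}\psi(x)\indc_{v\cdot n_x>0}$ as a test function for the trace topology, and the half-space rigidity of polynomials) are all sound and merely make explicit what the paper leaves implicit.
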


\begin{proof}
Indeed, the diffuse reflection condition implies that
\be\lb{DiffReflexFluct}
g_\eps(x,v)=\L_x(g_\eps(x,\cdot))\,,\qquad v\cdot n_x<0\,.
\ee
Thus one can pass to the limit as $\eps\to 0$ in (\ref{DiffReflexFluct}). One arrives at
$$
g(x,v)=\L_x(g(x,\cdot))\,,\qquad x\in\d\Om\,,\,\,v\in\bR^3\,.
$$
Since we already know from Theorem \ref{T-NSF} that $g$ of the form
$$
g(x,v)=\overline{\th}+u(x)\cdot v+\th(x)\tfrac12(|v|^2-5)\,,
$$
this implies that
$$
u\rstr_{\d\Om}=0\,,\quad\hbox{ and }\th\rstr_{\d\Om}=0\,.
$$
\end{proof}

\begin{theorem}
Let $\Ma=\Kn=\eps$ while $\Fr=1$, and consider a family of solutions of the scaled Boltzmann equation (\ref{ScalB2}) supplemented with the diffuse reflection condition (\ref{DiffReflex}) and with the total mass condition (\ref{TMass}).
Assume that $F_\eps$ satisfies the same assumptions as in Theorem \ref{T-NSFVH}, and that the odd and even part of the relative fluctuation of distribution function, resp. $g_\eps$ and $h_\eps$ defined in (\ref{ScalDist2})
are continuous in $v$ and satisfy the condition
$$
g_\eps\rstr_{\d\Om\times\bR^3}\to g\rstr_{\d\Om\times\bR^3}\quad\hbox{Êand }\quad h_\eps\rstr_{\d\Om\times\bR^3}\to h\rstr_{\d\Om\times\bR^3}
$$
locally uniformly in $x,v$, where we recall that $g$ and $h$ are the weak limits of $g_\eps$ and $h_\eps$ in $L^2(\Om\times\bR^3;(1+|v|^2)Mdvdx)$ as $\eps\to 0$. Then $g$ and $h$ are given by the expressions (\ref{Limg3}) 
and (\ref{Limh2}), where $(u,\th)$ is a solution of the system (\ref{NSFVH}) with the Dirichlet boundary condition 
$$
u\rstr_{\d\Om}=0\,,\qquad\th\rstr_{\d\Om}=0\,.
$$
\end{theorem}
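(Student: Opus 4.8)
The statements about the form of $g,h$ and the fact that $(u,\th)$ solves (\ref{NSFVH}) are inherited verbatim from Theorem \ref{T-NSFVH}, so the only new content is the pair of Dirichlet conditions. The plan is to rewrite the diffuse reflection condition as a single isotropy condition on the incoming fluctuation, read off the no-slip condition from its leading order, and then obtain the temperature condition from a parity argument at grazing velocities.

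First I would translate (\ref{DiffReflex}) into fluctuation variables. Writing $F_\eps=M(1+\eps\phi_\eps)$ with $\phi_\eps:=g_\eps+\eps h_\eps$, and using that $\sqrt{2\pi}\la(v\cdot n_x)_+\ra=1$ (so that $\L_x(1)=1$, which is just mass conservation at the wall), the condition (\ref{DiffReflex}) becomes $\phi_\eps(x,v)=\L_x(\phi_\eps(x,\cdot))$ for $v\cdot n_x<0$, exactly as (\ref{DiffReflexFluct}) but now for the full fluctuation. Since the right-hand side does not depend on $v$, this says that $g_\eps+\eps h_\eps$ equals a $v$-independent constant $C_\eps(x):=\L_x(\phi_\eps)$ on the incoming velocities. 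To get the no-slip condition I would then pass to the limit: since $C_\eps(x)=g_\eps(x,v)+\eps h_\eps(x,v)$ for every incoming $v$, while $g_\eps\to g$ and $\eps h_\eps\to0$ by the assumed local uniform convergence, the limit $g(x,\cdot)$ is independent of $v$ on the open half-space $\{v\cdot n_x<0\}$. As $g(x,v)=u(x)\cdot v$ by (\ref{Limg3}), a linear function of $v$ that is constant on an open set must vanish, so $u|_{\d\Om}=0$.

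The temperature condition is the crux, and I would extract it from a parity argument on the grazing set. Because $g_\eps,h_\eps$ are continuous in $v$ and $C_\eps(x)$ is constant in $v$, the function $v\mapsto g_\eps+\eps h_\eps-C_\eps$ vanishes on the open set $\{v\cdot n_x<0\}$ and hence on its closure; in particular the identity $g_\eps(x,v)+\eps h_\eps(x,v)=C_\eps(x)$ holds on the grazing set $\{v\cdot n_x=0\}$. Now if $v\cdot n_x=0$ then $(-v)\cdot n_x=0$ as well, so the identity holds simultaneously at $v$ and at $-v$; using that $g_\eps$ is odd and $h_\eps$ is even in $v$ (\ref{EvOdd}), subtracting the two identities gives $g_\eps(x,v)=0$ while adding them gives $\eps h_\eps(x,v)=C_\eps(x)$ for every grazing $v$. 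Thus $h_\eps(x,\cdot)$ is constant on the tangent plane for each $\eps$, and passing to the limit shows that $h(x,\cdot)$ is constant there. Finally I would insert the explicit form (\ref{Limh2}) and use $u|_{\d\Om}=0$: then $A(u)=0$ and $|u|^2=0$ on $\d\Om$, while on a tangential velocity $v_\tau$ one has $\hat A(v_\tau):\grad u=0$, since $u|_{\d\Om}=0$ forces $\grad u=a\otimes n_x$ with $a\cdot n_x=\Div u=0$, so both contractions in $A(v_\tau):\grad u=(v_\tau\cdot a)(v_\tau\cdot n_x)-\tfrac13|v_\tau|^2(a\cdot n_x)$ vanish. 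Hence $h(x,v_\tau)=\rho(x)+\tfrac12\th(x)(|v_\tau|^2-3)$, and its constancy as $|v_\tau|$ varies over $[0,\infty)$ forces $\th|_{\d\Om}=0$.

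The step I expect to be the most delicate is the passage to the grazing set and the exchange with $\eps\to0$ there: it is precisely the hypothesis of local uniform convergence of $g_\eps,h_\eps$ up to $\d\Om$ that legitimizes evaluating the reflection identity at $v\cdot n_x=0$ and passing to the limit. Physically this hypothesis suppresses the Knudsen boundary layer, so in a fully rigorous treatment the genuine obstacle—circumvented here by assumption—would be to establish such strong convergence near the wall; granting it, the even/odd bookkeeping at grazing velocities is exactly what produces the temperature condition that the leading order (which only sees $u$) cannot detect.
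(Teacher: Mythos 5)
Your proof is correct and follows essentially the same route as the paper: the key step in both is the odd/even splitting of the diffuse-reflection identity at grazing velocities, which forces $g_\eps=0$ and $h_\eps$ to be constant (equal to $\L_x$ of the fluctuation) on the tangent plane, after which the explicit forms (\ref{Limg3}) and (\ref{Limh2}) yield the two Dirichlet conditions. The only cosmetic differences are that you read off $u\rstr_{\d\Om}=0$ from the whole incoming half-space using the linearity of $g$ in $v$ rather than from the grazing set, and that you exploit the constancy of $h(x,\cdot)$ on the tangent plane where the paper applies the tangential velocity gradient $P(x,D_v)$ --- the same fact in different clothing; note only that your passage from $A(v_\tau):\grad u=0$ to $\hat A(v_\tau):\grad u=0$ implicitly invokes $\hat A=\a(|v|)A$ from Lemma \ref{L-PtyAB}.
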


\begin{proof}

Specializing the equality above to the case where $v$ is tangential to the boundary, we find that
$$
g_\eps(x,v)=\L_x((g_\eps+\eps h_\eps)(x,\cdot))-\eps h_\eps(x,v)\,,\quad x\in\d\Om\,,\,\,v\cdot n_x=0\,,
$$
and observe that the left hand side of this equality is odd in $v=v-(v\cdot n_x)n_x$, while the right hand side is even. Therefore both sides vanish, so that 
$$
g_\eps(x,v)=0\,,\quad x\in\d\Om\,,\,\,v\cdot n_x=0\,.
$$
while
$$
h_\eps(x,v)=\L_x\left(\left(\frac1\eps g_\eps+h_\eps\right)(x,\cdot)\right)\,,\quad x\in\d\Om\,,\,\,v\cdot n_x=0\,.
$$
Passing to the limit on both sides of the first equality as $\eps\to 0$ shows that
$$
g(x,v)=\L_x(g(x,\cdot))\,,\quad x\in\d\Om\,,\,\,v\cdot n_x=0\,,
$$
and we conclude from (\ref{Limg3}) that
$$
u\rstr_{\d\Om}=0\,.
$$

Next we consider the differential operator
$$
P(x,D_v):=(I-n_x\otimes n_x)\grad_v
$$
--- which is the orthogonal projection of $\grad_v$ on the tangential direction of $\d\Om$ at $x$, and observe that
$$
P(x,D_v)h_\eps(x,v)=0\,,\quad x\in\d\Om\,,\,\,v\cdot n_x=0\,.
$$
Passing to the limit in both sides of this identity, we conclude that
$$
P(x,D_v)h(x,v)=0\,,\quad x\in\d\Om\,,\,\,v\cdot n_x=0\,.
$$
Substituting the expression (\ref{Limh2}) in this equality, we find that
$$
P(x,D_v)h(x,v)=-P(x,D_v)\hat A(v):\grad u(x)+v_\tau\th(x)\,,\quad x\in\d\Om\,,\,\,v\cdot n_x=0
$$
with
$$
v_\tau=v-(v\cdot n_x)n_x\,.
$$
Observe that
$$
P(x,D_v)\hat A(v):\grad u(x)=v_\tau\cdot\grad u(x)+(I-n_x\otimes n_x)\grad(u(x)\cdot v_\tau)=0\,,
$$
since we already know that $u\rstr_{\d\Om}=0$ and all the derivatives of $u$ appearing in the expression above are taken in directions tangential to $\d\Om$. Hence
$$
v_\tau\th(x)=0\,,\quad \hbox{ for each }x\in\d\Om\,,\,\,v\cdot n_x=0\,,
$$
which implies that $\th\rstr_{\d\Om}=0$.
\end{proof}


\setcounter{chapter}{3}                           
\setcounter{equation}{0} 

\BSs{3. Spatially Periodic Steady Solutions}{3.1. The Navier-Stokes Equations}

In this section, we assume that the spatial domain is $\Om=\bT^3$. Consider the system (\ref{NSF}) posed on $\bT^3$, and seek solutions satisfying
\be\lb{Mean0}
\int_{\bT^3}u(x)dx=\int_{\bT^3}\th(x)dx=0\,.
\ee
For simplicity, we assume further that $\Phi\equiv 0$. 

Multiplying both sides of the last equation in (\ref{NSF}) by $\th$, we see that
$$
u\cdot\grad(\tfrac12\th^2)=\Div(u\tfrac12\th^2)=\ka\th\Dlt\th\,,
$$
so that
$$
\ka\int_{\bT^3}|\grad\th(x)|^2dx=0\,.
$$
Therefore $\th\equiv 0$. 

Conversely, if $u$ is a solution of the motion equation and $\Phi\equiv 0$, then $(u,0)$ is a solution of the Navier-Stokes-Fourier system without viscous heating.

There exist indeed nontrivial solutions of the motion equation with nonzero solenoidal external force $f_s$. The simplest example is the case of a shear flow
$$
u(x)=(0,0,U(x_1,x_2))\,,\quad f_s(x)=(0,0,a(x_1,x_2))
$$
with
$$
\int_{\bT^2}U(x_1,x_2)dx_1dx_2=\int_{\bT^2}a(x_1,x_2)dx_1dx_2=0\,.
$$
Obviously
$$
\Div u=\d_{x_3}U=0\,,\qquad\Div f_s=\d_{x_3}a=0\,,
$$
and 
$$
\Div(u^{\otimes 2})=(u\cdot\grad)u=U\d_{x_3}u=0\,,
$$
so that the Navier-Stokes equation reduces to the Poisson equation in $\bT^2$:
$$
-\nu\Dlt U(x_1,x_2)=a(x_1,x_2)\,,\quad(x_1,x_2)\in\bT^2\,.
$$
For each zero-mean $a\in L^2(\bT^3)$, there exists a unique zero-mean $U\in H^2(\bT^2)$ of the Poisson equation above.

More generally, the following result is classical.

\begin{theorem} For each $f_s\in L^2(\bT^3)$ satisfying
$$
\int_{\bT^3}f_s(x)dx=0\,,
$$
there exists at least one solution $u\in H^2(\bT^3)$ of the Navier-Stokes equations with external force $f_s$ such that
$$
\int_{\bT^3}u(x)dx=0\,.
$$
Besides, there exists $c>0$ such that the solution $u\in H^2(\bT^3)$ and is unique if $\|f_s\|_{L^2}\le c\nu^2$.
\end{theorem}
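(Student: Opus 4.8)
The plan is to work in the Hilbert space $V$ of divergence-free, mean-zero vector fields in $H^1(\bT^3;\bR^3)$, equipped with the inner product $\la\grad u,\grad v\ra_{L^2}$, which is coercive on $V$ by the Poincar\'e inequality since the mean vanishes. Testing the motion equation against a field $v\in V$ eliminates the pressure, because on the torus the Leray projection is orthogonal to gradients; the weak formulation becomes $\nu\la\grad u,\grad v\ra_{L^2}+b(u,u,v)=\la f_s,v\ra$ for all $v\in V$, where $b(a,b,c):=\int_{\bT^3}(a\cdot\grad)b\cdot c\,dx$ and I have used $\Div(u^{\otimes2})=(u\cdot\grad)u$. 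The basic a priori estimate comes from testing with $v=u$ and using the antisymmetry identity $b(u,u,u)=\tfrac12\int_{\bT^3}u\cdot\grad|u|^2\,dx=-\tfrac12\int_{\bT^3}(\Div u)|u|^2\,dx=0$: this gives $\nu\|\grad u\|_{L^2}^2\le\|f_s\|_{L^2}\|u\|_{L^2}$, hence by Poincar\'e $\|\grad u\|_{L^2}\le(C_P/\nu)\|f_s\|_{L^2}$.

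For existence I would run the Galerkin method. Fix the orthonormal basis of $V$ furnished by the divergence-free trigonometric vector fields (the eigenfunctions of the Stokes operator on $\bT^3$), and project the weak formulation onto the span $V_m$ of the first $m$ of them. On $V_m\cong\bR^{N(m)}$ this is a system of quadratic equations $P_m(\xi)=0$; the a priori bound above shows that $P_m(\xi)\cdot\xi>0$ on a sphere of radius $(C_P/\nu)\|f_s\|_{L^2}+1$, so Brouwer's fixed point theorem produces a Galerkin solution $u_m\in V_m$ with $\|\grad u_m\|_{L^2}$ bounded uniformly in $m$. Extracting a subsequence converging weakly in $V$ and, by the Rellich compact embedding $H^1(\bT^3)\hookrightarrow\hookrightarrow L^4$, strongly in $L^4$, I pass to the limit; the strong $L^4$ convergence is exactly what is needed to identify the limit of the quadratic term $b(u_m,u_m,v)$, which yields a weak solution $u\in V$.

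Regularity up to $H^2$ is obtained by a bootstrap using elliptic (Stokes) regularity, the pressure being recovered from the divergence-free constraint either by Fourier analysis on $\bT^3$ or by the de Rham argument already invoked in the proof of Theorem \ref{T-NSF}. Since $u\in H^1\hookrightarrow L^6$ and $\grad u\in L^2$, H\"older's inequality gives $(u\cdot\grad)u\in L^{3/2}$; as $f_s\in L^2\subset L^{3/2}$, Stokes regularity yields $u\in W^{2,3/2}\hookrightarrow W^{1,3}$, whence $(u\cdot\grad)u\in L^2$ and a second application of Stokes regularity gives $u\in H^2$. This is the point where the three-dimensional borderline Sobolev exponents must be handled with care, and it is the classical content referred to in Chapter II, \S1 of \cite{Temam}.

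Finally, for the smallness uniqueness, let $u_1,u_2\in V$ be two solutions and set $w:=u_1-u_2$. Subtracting the two weak formulations and testing with $w$, the decomposition $(u_1\cdot\grad)u_1-(u_2\cdot\grad)u_2=(w\cdot\grad)u_1+(u_2\cdot\grad)w$ together with $b(u_2,w,w)=0$ gives $\nu\|\grad w\|_{L^2}^2=-b(w,u_1,w)$. Estimating $|b(w,u_1,w)|\le\|w\|_{L^4}^2\|\grad u_1\|_{L^2}\le C_S^2\|\grad w\|_{L^2}^2\|\grad u_1\|_{L^2}$ via the embedding $V\hookrightarrow L^4$, and then inserting the a priori bound $\|\grad u_1\|_{L^2}\le(C_P/\nu)\|f_s\|_{L^2}$, yields $\nu\|\grad w\|_{L^2}^2\le(C_S^2C_P/\nu)\|f_s\|_{L^2}\|\grad w\|_{L^2}^2$; hence $w=0$ as soon as $\|f_s\|_{L^2}\le c\nu^2$ with $c:=1/(C_S^2C_P)$. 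The main obstacle throughout is the quadratic convection term: it is responsible both for the need for strong compactness in the existence proof and for the delicate borderline bootstrap in the regularity step, while uniqueness fails without the smallness hypothesis precisely because this term is no longer controlled by the dissipation once $\|f_s\|_{L^2}$ is large.
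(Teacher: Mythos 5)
Your proposal is correct, but it follows a genuinely different route from the paper. The paper recasts the problem as the fixed-point equation $u=(-\nu\Dlt)^{-1}f_s-\tfrac1\nu Tu$ with $T:v\mapsto(-\Dlt)^{-1}\Pi\Div(v^{\otimes 2})$ acting on the mean-zero subspace $\fH^1$; for small data it runs the Banach contraction principle on a closed ball (which yields existence \emph{and} uniqueness, but only within that ball), and for general data it invokes Leray--Schauder degree theory, using the compactness of $T$ (via Rellich) together with the same energy a priori bound $\|\grad u\|_{L^2}\le\nu^{-1}\|(-\Dlt)^{-1/2}f_s\|_{L^2}$ to show the degree of $\cF_\a=I+\tfrac\a\nu T$ is $1$. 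You instead build the solution by Galerkin projection plus Brouwer's theorem, pass to the limit using the same Rellich compactness, and prove small-data uniqueness by the energy method on the difference $w=u_1-u_2$. The two existence arguments are topological cousins and cost about the same; your uniqueness argument is arguably stronger, since every weak solution automatically satisfies the a priori bound (test with $u$ itself), so you get uniqueness among \emph{all} weak solutions rather than only within the contraction ball. You also supply the elliptic bootstrap $L^{3/2}\to W^{2,3/2}\to W^{1,3}\to L^2\to H^2$ that justifies the $H^2$ regularity asserted in the statement, a step the paper's proof leaves implicit (it works entirely in $\fH^1$). Two cosmetic points: in the Brouwer step the standard lemma requires $P_m(\xi)\cdot\xi\ge 0$ on the sphere, which your strict inequality certainly delivers; and in the uniqueness step you should take $c$ strictly below $1/(C_S^2C_P)$ (say half of it) so that the factor $1-(C_S^2C_P/\nu^2)\|f_s\|_{L^2}$ is strictly positive and forces $\grad w=0$.
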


The proof of this classical result is given below --- see chapter II, \S 1 in \cite{Temam} for a similar result in a slightly different (nonperiodic) setting. First, we recall some elements of notation. We denote by $\fH$ the subspace of 
$L^2(\bT^3;\bR^3)$ of vector fields $v$ such that
$$
\int_{\bT^3}v(x)dx=0\,,
$$
and we set $\fH^1=H^1(\bT^3)\cap\fH$. It will be convenient to use the norm 
$$
\|v\|_{\fH^1}:=\|\grad v\|_{L^2}\,.
$$

We denote by $\Pi$ the $L^2$-orthogonal projection on divergence-free vector fields. In other words, if $v\in \fH$, its Fourier decomposition is
$$
v(x)=\sum_{k\in\bZ^3\setminus\{0\}}v_ke^{i2\pi k\cdot x}\,,
$$
and
$$
\Pi v(x)=\sum_{k\in\bZ^3\setminus\{0\}}(v_k-|k|^{-2}k(k\cdot v_k))e^{i2\pi k\cdot x}\,.
$$
Likewise, for each zero-mean $\phi\in\fH$ we define 
$$
(-\Dlt)^{-s}\phi(x):=\sum_{k\in\bZ^3\setminus\{0\}}|2\pi k|^{-2s}\phi_ke^{i2\pi k\cdot x}\quad\hbox{Êwhere }\phi_k=\int_{\bT^3}\phi(x)e^{-i2\pi k\cdot x}dx\,.
$$

\begin{proof}
Consider the map 
$$
T:\,v\mapsto(-\Dlt)^{-1}\Pi\Div(v^{\otimes 2})\,.
$$
First, observe that $T$ maps $\fH^1$ into itself:
$$
\ba
\|Tv\|_{H^1}&=\|(-\Dlt)^{-1/2}\Pi\Div(v^{\otimes 2})\|_{L^2}=\|\Pi(-\Dlt)^{-1/2}\Div(v^{\otimes 2})\|_{L^2}
\\
&\le\|(-\Dlt)^{-1/2}\Div(v^{\otimes 2})\|_{L^2}\le\|v^{\otimes 2}\|_{L^2}\le\|v\|^2_{L^4}\le C^2\|v\|^2_{\fH^1}
\ea
$$
by Sobolev embedding ($\fH^1\subset L^p(\bT^3)$ for $1\le p\le 6$). Similarly
$$
\ba
\|Tu-Tv\|_{H^1}=\|(-\Dlt)^{-1/2}\Pi\Div(u^{\otimes 2}-v^{\otimes 2})\|_{L^2}\le\|u^{\otimes 2}-v^{\otimes 2}\|_{L^2}&
\\
\le\|u\otimes(u-v)\|_{L^2}+\|(u-v)\otimes v\|_{L^2}\le(\|u\|_{L^4}+\|v\|_{L^4})\|u-v\|_{L^4}&
\\
\le C^2(\|u\|_{\fH^1}+\|v\|_{\fH^1})\|u-v\|_{\fH^1}&\,.
\ea
$$
Hence $T$ is continuous from $\fH^1$ into itself, and Lipschitz continuous on balls of $\fH^1$.

The Navier-Stokes equations can be put in the form
$$
u=(-\nu\Dlt)^{-1}f_s-\tfrac1\nu Tu\,,
$$
and is embedded into the family of equations
$$
\cF_\a(u)=(-\nu\Dlt)^{-1}f_s\,,\quad\hbox{ where }\cF_\a=I+\frac\a\nu T
$$
parametrized by $\a\in[0,1]$.

If $u,v\in\fH^1$ with $\|u\|_{H^1}\le R$ and $\|v\|_{H^1}\le R$, then the map $\cG_\a$ defined by
$$
\cG_\a(w)=(-\nu\Dlt)^{-1}f_s+w-\cF_\a(w)
$$
satisfies the bound
$$
\|\cG_\a(u)-\cG_\a(v)\|_{H^1}\le\frac{2C^2R}{\nu}\|u-v\|_{H^1}
$$
for $\a\in[0,1]$. On the other hand, if $u\in\fH^1$ with $\|u\|_{H^1}\le R$
$$
\|\cG_\a(u)\|_{H^1}\le\|(-\nu^2\Dlt)^{-1/2}f_s\|_{L^2}+\frac{C^2}\nu R^2\le R\,.
$$
Thus, if 
$$
\|(-\Dlt)^{-1/2}f_s\|_{L^2}\le\tfrac12\nu R\quad\hbox{ and }R<\frac{\nu}{2C^2}\,,
$$
then $\cG_\a$ maps the closed ball $\overline{B(0,R)}\subset\fH_1$ into itself and is a strict contraction on $\overline{B(0,R)}$. Hence $\cG_\a$ has a unique fixed point in $\overline{B(0,R)}\subset\fH^1$
for $\a\in[0,1]$ provided that
$$
\|(-\Dlt)^{-1/2}f_s\|_{L^2}<\frac{\nu^2}{4C^2}\,,\quad\hbox{ with }R=\frac2\nu\|(-\Dlt)^{-1/2}f_s\|_{L^2}\,.
$$
In particular, for $\a=1$, this unique fixed point of $\cG_\a$ is the unique solution of the Navier-Stokes equation in $\overline{B(0,R)}\subset\fH^1$.

The estimate
$$
\|Tu-Tv\|_{H^1}\le(\|u\|_{L^4}+\|v\|_{L^4})\|u-v\|_{L^4}
$$
and Rellich's theorem imply that the map $T$ is compact in $\fH^1$. Indeed, if $u_n\to u$ weakly in $\fH^1$, then $u_n\to u$ strongly in $L^4(\bT^3)$ by the Rellich compactness theorem, and the inequality above with $v=u_n$ shows that
$Tu_n\to Tu$ strongly in $\fH^1$. On the other hand, for each $u\in\fH^1$, the equation $\cF_\a(u)=(-\nu\Dlt)^{-1}f_s$ is equivalent to
$$
\Div u=0\,,\qquad \a(u\cdot\grad)u+\grad p=\nu\Dlt u+f_s\,,\quad x\in\bT^3\,.
$$
Multiplying both sides of the motion equation by $u$ and integrating over $\bT^3$, one finds that
$$
\nu\int_{\bT^3}|\grad u|^2dx\le\int_{\bT^3}f_s\cdot udx\le\|(-\Dlt)^{-1/2}f_s\|_{L^2}\|\grad u\|_{L^2}
$$
so that
$$
\|\grad u\|_{L^2}\le\tfrac1\nu\|(-\Dlt)^{-1/2}f_s\|_{L^2}\,.
$$
Setting $R'=\tfrac1\nu\|(-\Dlt)^{-1/2}f_s\|_{L^2}+1$, we see that $\cF_\a$ maps $\overline{B(0,R')}\subset\fH^1$ into $\fH^1$ and that $u\in\d B(0,R')\subset\fH^1$ implies that $\cF_\a(u)\not=(-\nu\Dlt)^{-1}f_s$
for all $\a\in[0,1]$. Therefore
$$
\hbox{degree}(\cF_1,\overline{B(0,R')},(-\nu\Dlt)^{-1}f_s)=\hbox{degree}(\cF_0,\overline{B(0,R')},(-\nu\Dlt)^{-1}f_s)=1
$$
because $\cF_0$ is the identity. Hence the equation
$$
\cF_1(u)=(-\nu\Dlt)^{-1}f_s\,,
$$
which is equivalent to the steady Navier-Stokes equations in $\bT^3$, has at least one solution in $\fH^1$.
\end{proof}

\smallskip
However, the only solution of the Navier-Stokes-Fourier system with viscous heating (\ref{NSFVH}) satisfying (\ref{Mean0}) is $(0,0)$. Indeed, integrating both sides of the last equation in (\ref{NSFVH}) shows that
$$
\int_{\bT^3}|\grad u+(\grad u)^T|^2(x)dx=0\,.
$$
Hence
$$
\grad u+(\grad u)^T=0\,.
$$
In particular, $\Div u=0$ and
$$
-\Div(\grad u)=-\Dlt u=\Div((\grad u)^T)=\grad(\Div u)=0\,,
$$
so that $u$ is a harmonic vector field on $\bT^3$ satisfying (\ref{Mean0}). Hence $u=0$. Returning to the heat equation in (\ref{NSFVH}), we see that $\th$ is a harmonic function on $\bT^3$, and (\ref{Mean0}) implies that $\th=0$.

\newpage

\bs{3.2. The Boltzmann Equation}

\begin{theorem}
Let $F\equiv F(x,v)$ be a $C^1$ solution of the steady Boltzmann equation
$$
v\cdot\grad_xF+f(x)\cdot\grad_vF=\cC(F)\,,\qquad x\in\bT^3\,,\,\,v\in\bR^3\,,
$$
and assume that $F$ is rapidly decaying in $v$ while $\ln F$ has polynomial growth in $v$ as $|v|\to\infty$.

Then $f$ is a gradient field, while $F$ is a Maxwellian distribution with constant temperature. More precisely, there exists $\Phi\in C^1(\bT^3)$, a vector $u\in\bR^3$, and two constants $\th>0$ and $C\ge 0$ such that
$$
f=-\grad\Phi\,,\quad\hbox{ and } F(x,v)=C\exp\left(-\frac1\th\left(\tfrac12|v-u|^2+\Phi(x)\right)\right)
$$
with 
$$
u\cdot\grad\Phi=0\,.
$$
In particular, if $\Div f=0$, then $f=0$ and $F$ is a uniform Maxwellian.
\end{theorem}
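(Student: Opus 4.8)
The plan is to combine Boltzmann's H theorem with the absence of boundary on $\bT^3$ to force $F$ to be a local Maxwellian, and then to feed that information back into the transport part of the equation. First I would multiply the steady equation by $\ln F+1$ and integrate in $v$ over $\bR^3$. The force term $\int_{\bR^3}f\cdot\grad_v(F\ln F)\,dv$ vanishes by the divergence theorem (justified by the rapid decay of $F$ and the polynomial growth of $\ln F$), and since $\int_{\bR^3}\cC(F)\,dv=0$ one is left with the local H theorem $\Div_x\int_{\bR^3}vF\ln F\,dv=\int_{\bR^3}\cC(F)\ln F\,dv\le 0$. Integrating once more over $\bT^3$, the divergence integrates to zero because the torus has no boundary, so $\iint_{\bT^3\times\bR^3}\cC(F)\ln F\,dv\,dx=0$. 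As the integrand is pointwise nonpositive, it must vanish for every $x$, and the equality case of the H theorem forces $F(x,\cdot)=\cM_{(\rho(x),u(x),\th(x))}$ for each $x\in\bT^3$.

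Since $\cC$ annihilates Maxwellians, the equation collapses to the force-transport identity $v\cdot\grad_x F+f\cdot\grad_v F=0$, i.e. $v\cdot\grad_x\psi+f\cdot\grad_v\psi=0$ with $\psi:=\ln F$. Writing $\psi=a(x)+b(x)\cdot v-\tfrac12c(x)|v|^2$, where $c=1/\th$ and $b=u/\th$, the left-hand side is a polynomial in $v$ of degree three that must vanish identically. Matching coefficients degree by degree yields four conditions: the cubic term gives $\grad c=0$, so $\th$ is constant; the quadratic term gives $\d_ib_j+\d_jb_i=0$, which (as $b=u/\th$ with $\th$ constant) means $\d_iu_j+\d_ju_i=0$; the linear term gives $\grad a=cf$; and the constant term gives $f\cdot b=0$.

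The Killing condition $\d_iu_j+\d_ju_i=0$ on flat space forces every second derivative of $u$ to vanish (differentiate and permute indices), so $u$ is affine and hence constant on $\bT^3$. With $\th$ and $u$ constant one has $a=\ln\rho+\text{const}$, and $\grad a=cf=f/\th$ gives $f=\th\grad\ln\rho=\grad(\th\ln\rho)$; setting $\Phi:=-\th\ln\rho$ (up to an additive constant) we obtain $f=-\grad\Phi$, and reassembling $\psi$ produces the claimed form $F=C\exp\!\big(-\tfrac1\th(\tfrac12|v-u|^2+\Phi(x))\big)$. The constant-term condition $f\cdot b=0$ reads $u\cdot\grad\Phi=0$. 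Finally, if $\Div f=0$ then $\Dlt\Phi=0$ on $\bT^3$, so the only periodic solution is $\Phi\equiv\text{const}$, whence $f=0$ and $F$ is a uniform Maxwellian.

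The main obstacle is really the first step: one must verify that the assumed decay of $F$ and growth of $\ln F$ genuinely annihilate all the boundary-at-infinity contributions, so that both the local H theorem and its rigidity (equality) case apply verbatim to the given $C^1$ solution. Once $F$ is identified as a local Maxwellian, the remainder is the elementary but careful identification of a polynomial in $v$ together with the Killing-field rigidity on the torus.
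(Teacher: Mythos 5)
Your proposal is correct and follows essentially the same route as the paper: the global H theorem on the boundaryless torus forces $F$ to be a local Maxwellian with $\cC(F)=0$, the remaining transport identity is expanded as a polynomial in the velocity variable and matched degree by degree (yielding $\grad\th=0$, the Killing condition on $u$, the gradient structure of $f$, and $u\cdot\grad\Phi=0$), and periodicity kills the Killing field's nonconstant part and the harmonic potential when $\Div f=0$. The only cosmetic difference is that you expand $\ln F$ as a quadratic in $v$ while the paper expands in $V=(v-u)/\sqrt{\th}$, which makes your constant-order condition read $f\cdot u=0$ rather than the paper's $u\cdot\grad\rho=0$; the two systems of conditions are equivalent.
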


\begin{proof}
The global form of Boltzmann's H theorem shows that
$$
\iint_{\bT^3\times\bR^3}\cC(F)\ln Fdxdv=0\,.
$$
Hence $F(x,v)$ is a local Maxwellian satisfying
$$
v\cdot\grad_xF+f(x)\cdot\grad_vF=\cC(F)=0\,,\quad x\in\bT^3\,,\,\,v\in\bR^3\,.
$$
Setting $F(x,v)=\cM_{(\rho(x),u(x),\th(x))}(v)$, the Boltzmann equation reduces to
$$
\ba
\frac{v\cdot\grad\rho(x)}{\rho(x)}+\frac{(v-u)\otimes v:\grad u(x)}{\th(x)}+\frac{v\cdot\grad\th(x)}{2\th(x)^2}(|v-u(x)|^2-3\th(x))&
\\
=
f(x)\cdot\frac{v-u(x)}{\th(x)}&\,.
\ea
$$
Setting
$$
V:=\frac{v-u(x)}{\sqrt{\th(x)}}\,,
$$
the equality above is recast as
$$
\ba
u(x)\cdot\grad\ln\rho(x)+\th(x)^{-1/2}V\cdot(\th(x)\grad\ln\rho(x)+(u\cdot\grad)u)
\\
+\tfrac12(|V|^2-3)u(x)\cdot\grad\ln\th(x)+V^{\otimes 2}:\grad u(x)
\\
+(|V|^2-3)V\cdot\grad\sqrt{\th(x)}=\th(x)^{-1/2}f(x)\cdot V\,.
\ea
$$
Hence
$$
\grad\sqrt\th=0
$$
so that
$$
\th=\hbox{Const.}
$$
and the equality above reduces to
$$
\ba
u(x)\cdot\grad\ln\rho(x)+\th^{-1/2}V\cdot(\th\grad\ln\rho(x)+(u\cdot\grad)u)+V^{\otimes 2}:\grad u(x)&
\\
=\th^{-1/2}f(x)\cdot V&\,.
\ea
$$
Therefore
$$
\ba
u\cdot\grad\rho=0\,,
\\
\th\grad\ln\rho+(u\cdot\grad)u=f\,,
\\
\grad u+(\grad u)^T=0\,.
\ea
$$
Using the third equation, the second equation is recast as
$$
\th\grad\ln\rho+(u\cdot\grad)u=\th\grad\ln\rho-(\grad u)^Tu=\th\grad\ln\rho-\grad\tfrac12|u|^2=f
$$
so that
$$
f=\grad(\th\ln\rho-\tfrac12|u|^2)
$$
must be a gradient field.

Next
$$
\grad u+(\grad u)^T=0
$$
which implies as above that $u$ is harmonic on $\bT^3$, and therefore is a constant.

We conclude that
$$
f(x)=-\grad\Phi(x)\,,\quad F(x,v)=Ce^{-\frac1\th(\frac12|v-u|^2+\Phi(x))}\quad\hbox{ with }u\cdot\grad\Phi(x)=0\,.
$$
Hence $u=0$ unless $\Phi$ takes its values in an affine space orthogonal to $u$.
\end{proof}


\setcounter{chapter}{4}                           
\setcounter{equation}{0} 

\BS{4. Physical discussion and study of a numerical example}

As we observed in Sec.~3.2, there is no spatially periodic steady solution
of the Boltzmann equation for an external force that is not derived from
a potential. In contrast, the Navier--Stokes equations have a
spatially periodic and steady solution for such an external force.
This discrepancy seems to be contradictory, since the Navier--Stokes
equations are derived from the Boltzmann equation, as shown in Sec.~2.1.
In this section, we will further examine this seemingly contradictory results.


\bs{4.1 A possible physical explanation of the paradox}

Let us consider a gas in a periodic box or in a specularly reflecting box.
As we have seen in Sec.~3.2, the steady solution of the Boltzmann equation
with an external force derived from a potential has the following properties:
the temperature is uniform, the macroscopic flow of the gas vanishes
except for a special case, and the density is distributed according to the
potential (i.e., a stratified gas at rest).
Now, let us consider a time-dependent problem
starting from a given initial state. If the external force has a potential,
the time-dependent solution should approach the steady solution mentioned
above, i.e., the solution without a gas flow and with a density stratification,
in the long-time limit. Then, what will happen when the force
does not have a potential? At the initial stage, a gas flow is caused by
the external force. But, since the force does not have a potential, the
density stratification that blocks the gas flow cannot be formed. Therefore,
the flow remains forever, or at least, for much longer time. The induced
gas flow, in general, has a shear. If we consider the case with relatively
small Knudsen number, this shear gives rise to the viscous heating.
However, because the boundary is periodic or adiabatic (in the case of
specularly reflecting box), the heat generated by the shear in the gas
cannot escape through the boundary. This means that the temperature in the
gas will increase indefinitely. This is the reason why there is
no steady solution for the Boltzmann equation.

On the other hand, the Navier--Stokes equations (\ref{NSF}) are derived in 
the limit where the effect of the viscous heating is negligibly small. Therefore, 
the mass and momentum equations (the first two equations in (\ref{NSF})) 
are decoupled with the energy equation (the last equation) when $\Phi=0$. 
The former equations give a velocity field with a shear, and the latter equation 
gives a constant temperature field. However, if we include the effect of viscous 
heating, as was done in Sec.~2.2, the energy equation is changed to the last 
equation in (\ref{NSFVH}), the mass and momentum equations
being unchanged. Therefore, the flow velocity field is the same, but
the new energy equation does not have a solution for this velocity field.
If we consider a time-dependent version of this energy equation, we can
easily see that the temperature increases indefinitely. In conclusion,
the difference between the Boltzmann equation and the Navier--Stokes
equations is due to the fact that the effect of viscous heating is
neglected in the Navier--Stokes limit with $\Ma=\Kn=\Fr^2=\eps$.


\bs{4.2 Numerical example}

For the purpose of understanding the phenomenon of heating predicted above,
we consider a simple numerical example. Let us consider a gas in a
two-dimensional square box $-1/2 < x_1 < 1/2$, $-1/2 < x_2 < 1/2$ with
periodic condition on each side. We assume that the external force is
of the form $f=(0,\, f_0 \sin 2\pi x_1,\, 0)$, which is divergence free
and does not have a potential. Initially, the gas is in a uniform
equilibrium state at rest with density 1 and temperature 1. We pursue the
time evolution of the solution and observe whether the temperature
increases indefinitely or not.

We analyze the problem mainly using the  Bhatnagar--Gross--Krook (also known as BGK)
model, but some results based on the original Boltzmann equation will also
be presented. Because of the form of the external force, we can assume that
the flow field is spatially one dimensional depending only on $x_1$ and
is periodic in $x_1$ with period 1. Moreover, the external force is symmetric
with respect to $x_1=-1/4$ and $x_1=1/4$, so that we can also assume
the same symmetry for the flow field. Therefore, placing specularly
reflecting boundaries at $x_1=-1/4$ and $x_1=1/4$, we can analyze the
problem in the finite interval $-1/4 \le x_1 \le 1/4$.

Here, we formulate the problem using the BGK model. In the present problem,
the BGK model in an appropriate dimensionless form can be written as
\begin{align}
\partial_t F + v_1 \partial_{x_1} F + f_0 \sin2\pi x_1 \partial_{v_2} F
= \frac{1}{\mathrm{Kn}} \mathcal{C}_{\mathrm{BGK}}(F),
\nonumber
\end{align}
where $F(x_1,t,v)$ is the velocity distribution function including
the time variable $t$, and
\begin{subequations}
\begin{align}
& \mathcal{C}_{\mathrm{BGK}}(F) = \left( \frac{8}{\pi} \right)^{1/2}
\rho \left( \mathcal{M} - F \right),
\nonumber \\
& \mathcal{M} = \mathcal{M}_{(\rho(x_1,t), v(x_1,t), \theta(x_1,t))}
= \frac{\rho(x_1,t)}{[2\pi \theta(x_1,t)]^{3/2}}
\exp \left( - \frac{|v - u(x_1,t)|^2}{2\theta(x_1,t)} \right),
\nonumber \\
& \rho(x_1,t) = \int_{{\bm{\mathrm{R}}}^3} F dv,
\nonumber \\
& u_i(x_1,t) = \frac{1}{\rho} \int_{{\bm{\mathrm{R}}}^3} v_i F dv,
\qquad (i=1,\, 2,\, 3;\, u_3=0)
\nonumber \\
& \theta(x_1,t) = \frac{1}{3\rho} \int_{{\bm{\mathrm{R}}}^3} |v - u|^2 F dv.
\nonumber
\end{align}
\end{subequations}
Here, $\mathrm{Kn}$ is the Knudsen number, i.e., the mean free path in the
initial equilibrium state at rest divided by the length of the period (note
that $\mathrm{Kn}$ is denoted by $\epsilon$ in Sec.~2 because the limit as
$\epsilon \to 0$ is discussed there).
The factor $(8/\pi)^{1/2}$ appears because of the manner of
nondimensionalization used here in consistency with the form of the
Boltzmann equation in earlier sections. The specular reflection condition at
$x_1=-1/4$ and $1/4$ is as follows:
\begin{align}
F(\pm 1/4, t, v) = F(\pm 1/4, t, Rv),\quad \mathrm{for}\;\; \mp v_1>0,
\nonumber
\end{align}
where $R$ is the reflection operator: $Rv=(-v_1, v_2, v_3)$. The initial
condition is given by
\begin{align}
F(x_1, 0, v) = \mathcal{M}_{(1, 0, 1)}.
\nonumber
\end{align}
We solve this initial-boundary-value problem by the finite-difference method.

\begin{figure}
\centering
\includegraphics[scale=0.95]{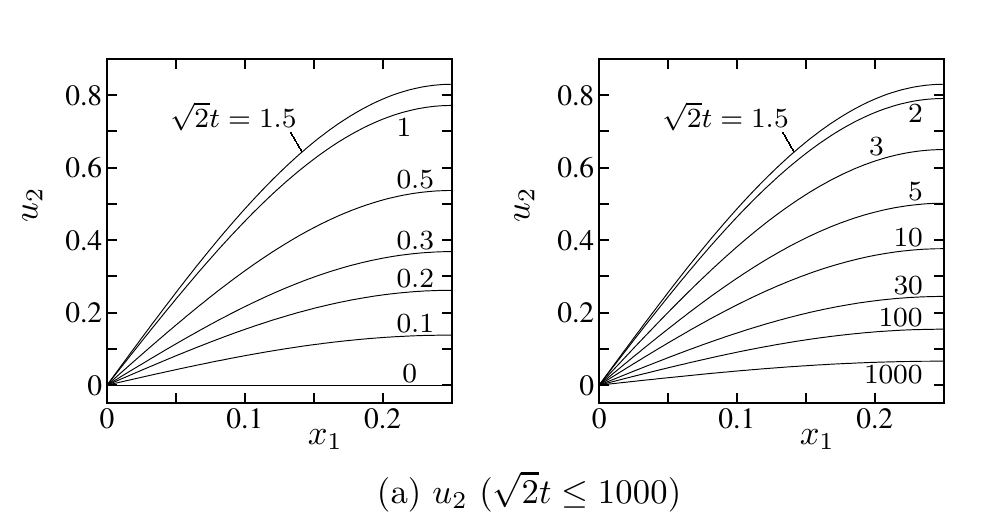} \\
\vspace*{1mm}
\includegraphics[scale=0.95]{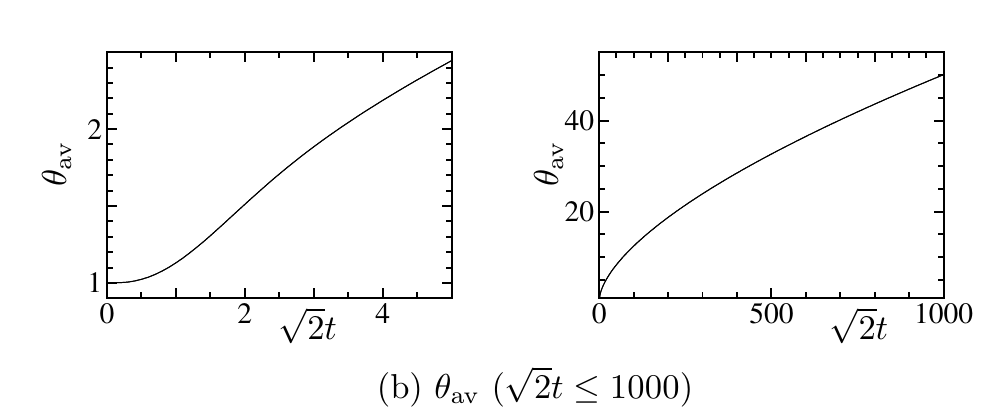} \\
\vspace*{1mm}
\includegraphics[scale=0.95]{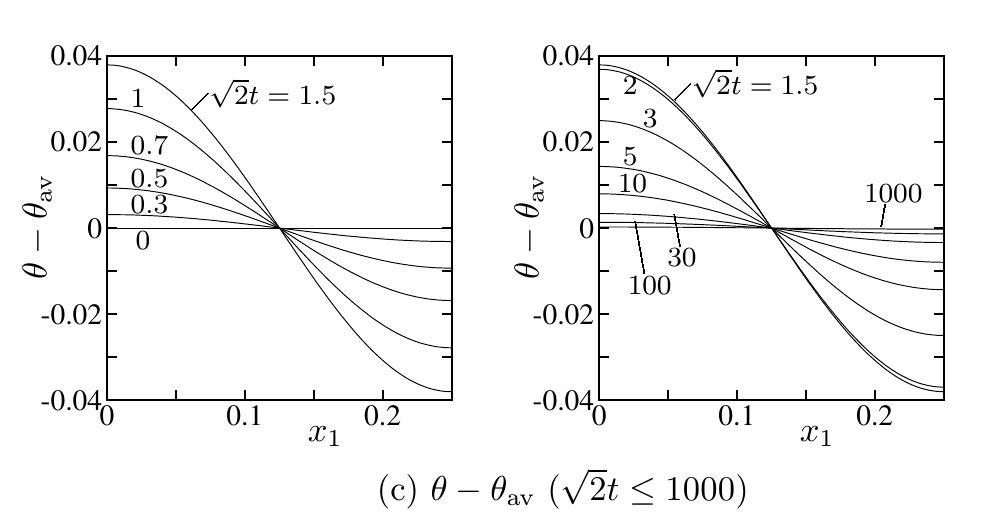} \\
\vspace*{1mm}
\caption{BGK model with $\Kn=0.1$ and $f_0=2$}
\label{fig4.1}
\end{figure}

\addtocounter{figure}{-1}
\begin{figure}
\centering
\includegraphics[scale=0.95]{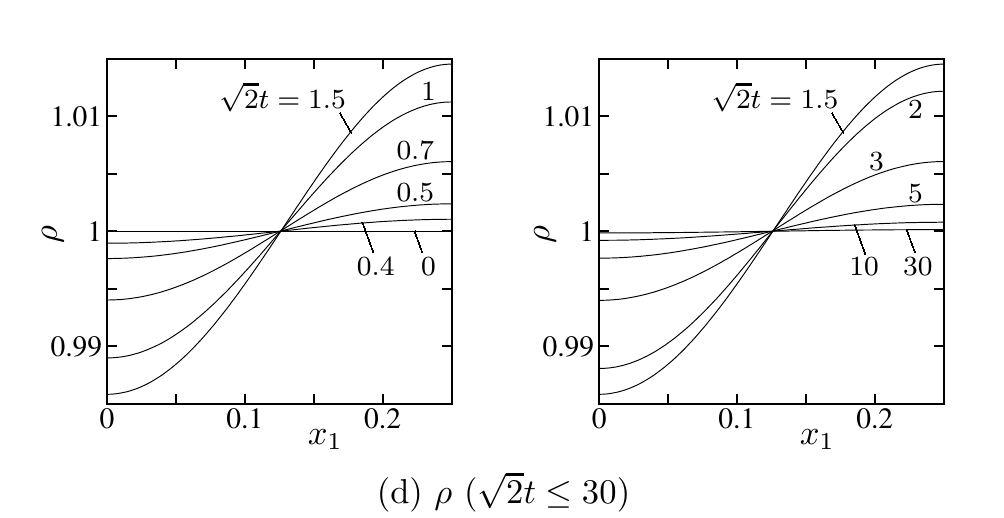} \\
\vspace*{1mm}
\includegraphics[scale=0.95]{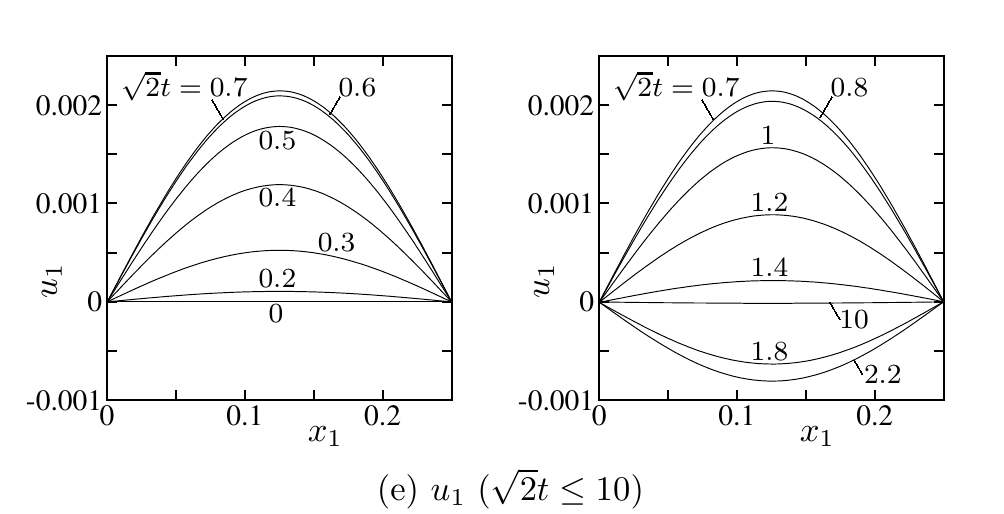} \\
\vspace*{1mm}
\includegraphics[scale=0.95]{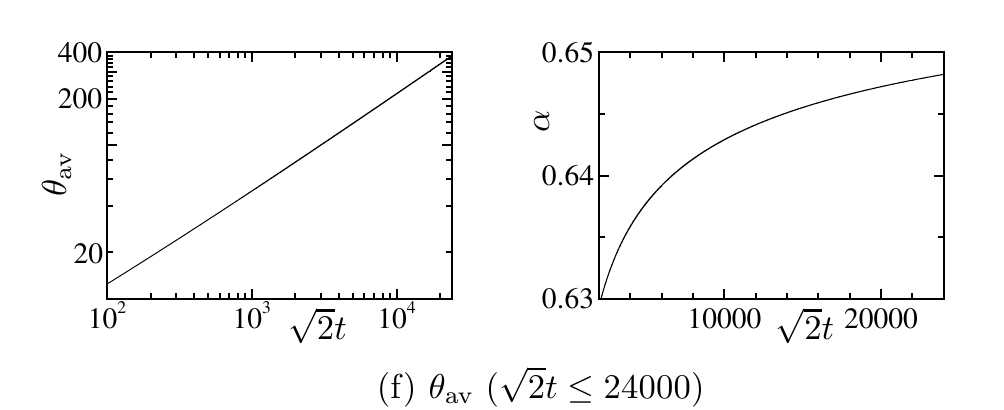} \\
\vspace*{1mm}
\includegraphics[scale=0.95]{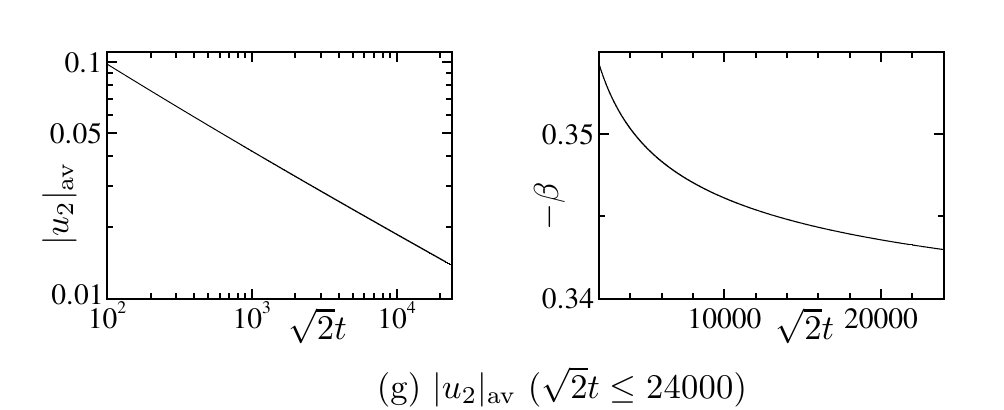} \\
\vspace*{1mm}
\caption{Continued}
\end{figure}

Now we show some of the numerical results for $\mathrm{Kn}=0.1$ and
for $f_0=2$ (Fig.~\ref{fig4.1}) and $0.2$ (Fig.~\ref{fig4.2}).

Figure \ref{fig4.1}(a) shows the profile of the $x_2$ component of the
flow velocity $u_2$ from $t=0$ to $1000/\sqrt{2}$ in the half interval 
$0 \le x_1 \le 1/4$. Since $u_1$ and $u_2$ are odd functions of 
$x_1$, and $\rho$ and $\theta$ are even functions of $x_1$, we show 
the profiles of these quantities only in
the half interval here and in what follows. The sinusoidal external
force parallel to the $x_2$ direction induces $u_2$ at the very early
stage, but $u_2$ starts decreasing after $t=1.5/\sqrt{2}$.
Figure \ref{fig4.1}(b)
shows the time evolution of the average temperature
$\theta_{av}(t)=2\int_{-1/4}^{1/4} \theta(x_1, t)dx_1$ until
$t=1000/\sqrt{2}$, and Fig.~\ref{fig4.1}(c) the corresponding evolution
of the profile of the deviation
$\theta-\theta_{av}$. The $\theta$ becomes nonuniform at the early stage
but tends to become uniform after $t=1.5/\sqrt{2}$. On the other hand,
$\theta_{av}$ increases and reaches 50 at $t=1000/\sqrt{2}$. As shown in
Fig.~\ref{fig4.1}(d), the density $\rho$ is nonuniform only at the very early
stage and is almost uniform (i.e., almost $\rho=1$) at $t=30/\sqrt{2}$.
Corresponding to the nonuniformity of the density, the flow-velocity
component $u_1$ perpendicular to the external force arises at the
very early stage [Fig.~\ref{fig4.1}(e)], but its magnitude is very small
and practically vanishes at $t=10/\sqrt{2}$. Figures \ref{fig4.1}(f)
and \ref{fig4.1}(g) show the long-time
behavior of $\theta_{av}(t)$ and the average speed,
$|u_2|_{av}(t) = 2\int_{-1/4}^{1/4} |u_2(x_1, t)| dx_1$,
up to $t=24000/\sqrt{2}$.
The left figures show the double-logarithmic plot of $\theta_{av}$
versus $t$ and that of $|u_2|_{av}$ versus $t$. In the right figures,
the gradients of the curves in the left figures, i.e.,
$\alpha=d \ln \theta_{av}/d \ln t$ and $\beta=d \ln |u_2|_{av}/d \ln t$
are plotted. If $\alpha$ and $\beta$ approach constant values,
say $\alpha_0$ and $\beta_0$, respectively, then we have the long-time
behavior as $\theta_{av} \approx C_\theta t^{\alpha_0}$ and
$|u_2|_{av} \approx C_u t^{\beta_0}$ with positive constants $C_\theta$ and
$C_u$. From Figs.~\ref{fig4.1}(f) and \ref{fig4.1}(g), it is still not clear
whether $\alpha$ and
$\beta$ converge to finite values or not. But, if it is the case,
it is likely that $\alpha_0 \approx 0.66$ and
$\beta_0 \approx -0.34 \approx -(1-\alpha_0)$.

Figures \ref{fig4.2}(a)--\ref{fig4.2}(g) show the behavior, corresponding
to Figs.~\ref{fig4.1}(a)--\ref{fig4.1}(g),
for a weaker external force ($f_0=0.2$). The tendency of the time evolution
of the solution is similar to Fig.~\ref{fig4.1}. However, since the magnitude
of the force is $1/10$, the resulting flow and the temperature rise are
smaller. As Fig.~\ref{fig4.2}(a) shows, the flow speed $|u_2|$, which is
smaller by one order of
magnitude, takes the maximum at around $t=4/\sqrt{2}$ and decreases more slowly
than in Fig.~\ref{fig4.1}(a). Figure \ref{fig4.2}(b) shows that the increase
of $|\theta_{av}|$
is much slower compared to Fig.~\ref{fig4.1}(b). One sees from
Figs.~\ref{fig4.2}(c)--\ref{fig4.2}(e) that
the nonuniformity of $\theta$ and $\rho$ and the magnitude of $u_1$ are
smaller by two orders of magnitude. In Figs.~\ref{fig4.2}(f) and
\ref{fig4.2}(g), we show the
long-time behavior of $\theta_{av}$ and $|u_2|$ up to an extremely large
time, $t=336000/\sqrt{2}$. As in Figs.~\ref{fig4.1}(f) and \ref{fig4.1}(g),
the left figures
are the $\log-\log$ plots, and the right figures are their gradients
$\alpha$ and $\beta$. Even at such a large time, it is not clear whether
or not $\alpha$ and $\beta$ converge to constants. But, if they converge,
the values would not differ much from the case of $f_0=2$
[cf.~Figs.~\ref{fig4.1}(f) and \ref{fig4.1}(g)].

\begin{figure}
\centering
\includegraphics[scale=0.95]{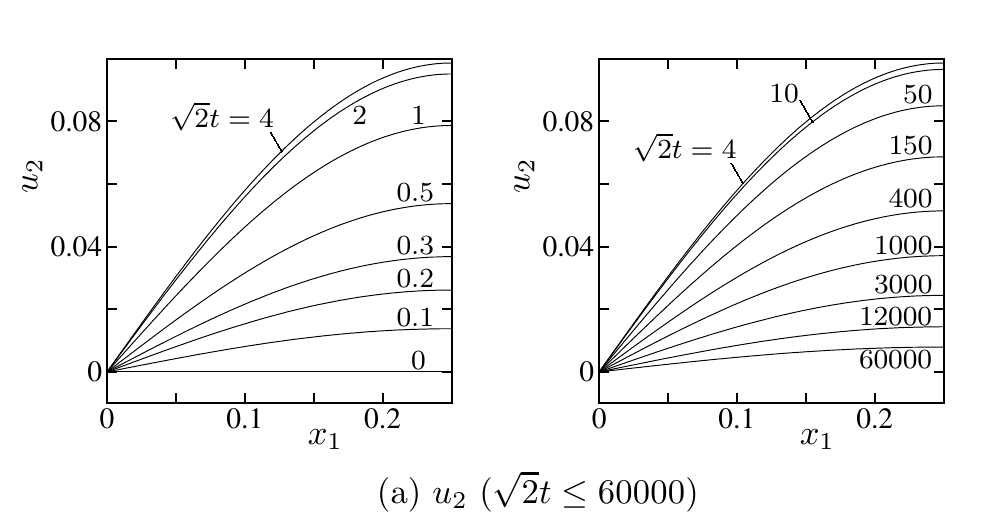} \\
\vspace*{1mm}
\includegraphics[scale=0.95]{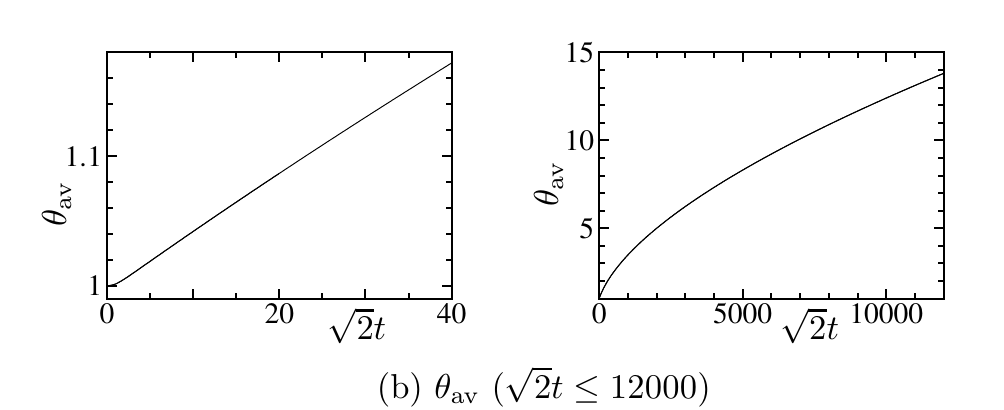} \\
\vspace*{1mm}
\includegraphics[scale=0.95]{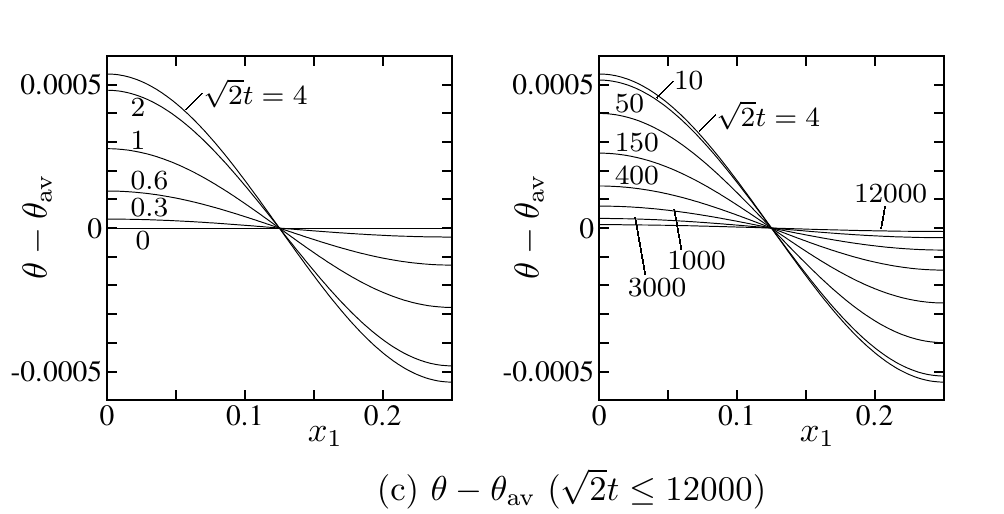} \\
\vspace*{1mm}
\caption{BGK model with $\Kn=0.1$ and $f_0=0.2$}
\label{fig4.2}
\end{figure}

\addtocounter{figure}{-1}
\begin{figure}
\centering
\includegraphics[scale=0.95]{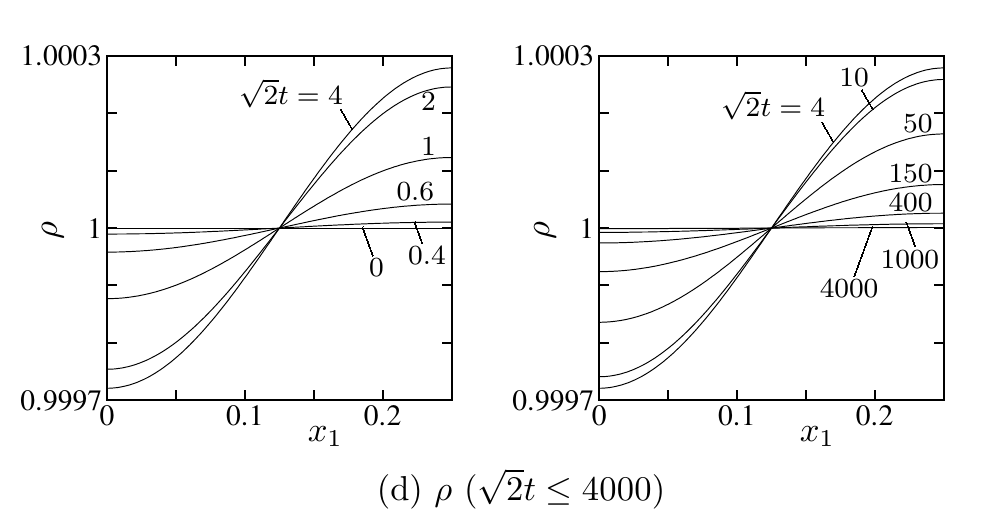} \\
\vspace*{1mm}
\includegraphics[scale=0.95]{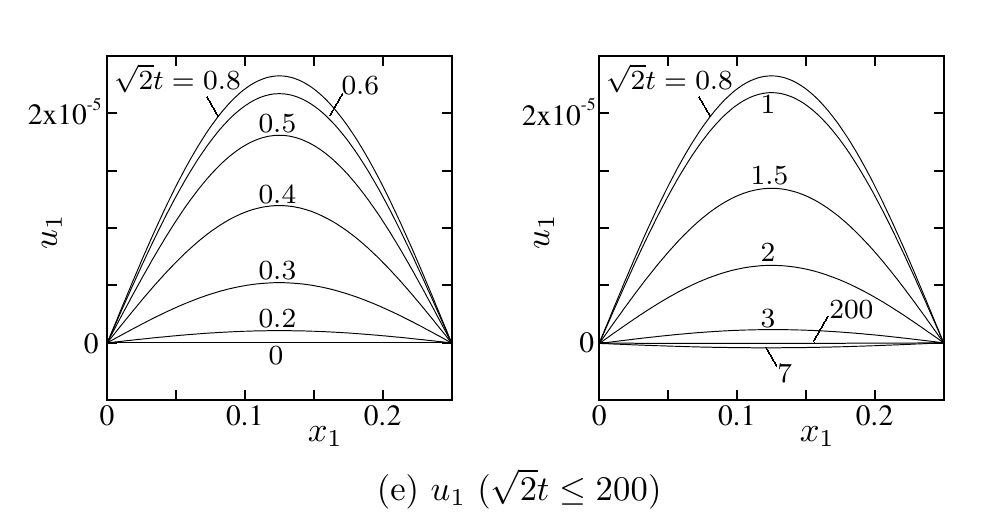} \\
\vspace*{1mm}
\includegraphics[scale=0.95]{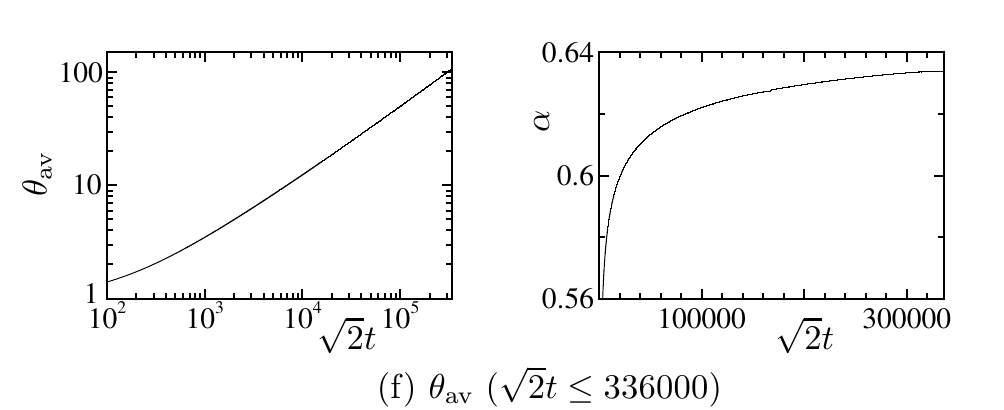} \\
\vspace*{1mm}
\includegraphics[scale=0.95]{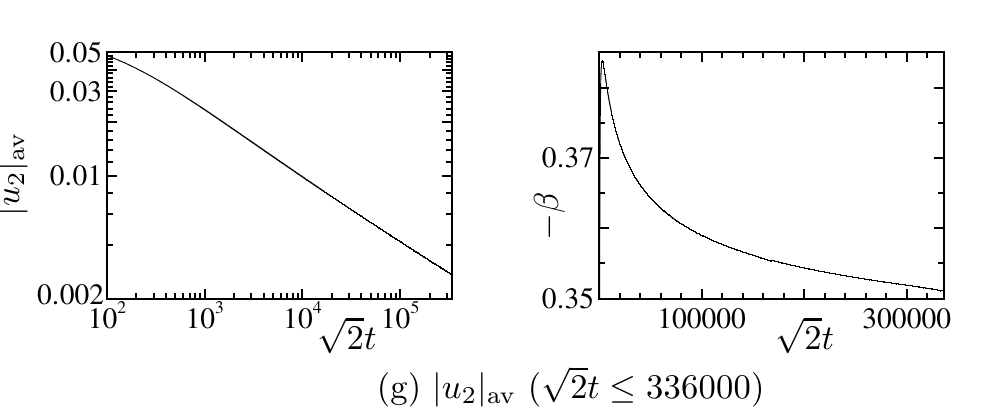} \\
\vspace*{1mm}
\caption{Continued}
\end{figure}

\begin{figure}
\centering
\includegraphics[scale=0.95]{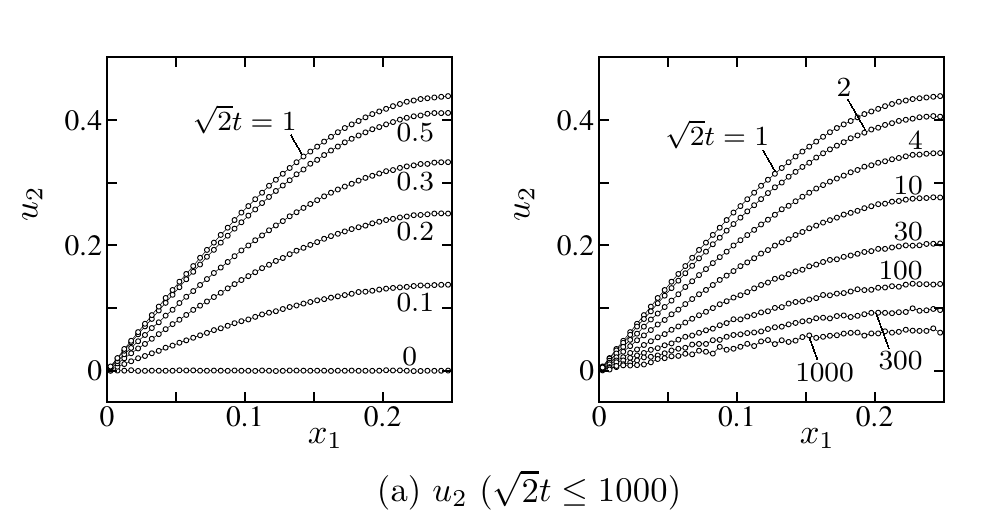} \\
\vspace*{1mm}
\includegraphics[scale=0.95]{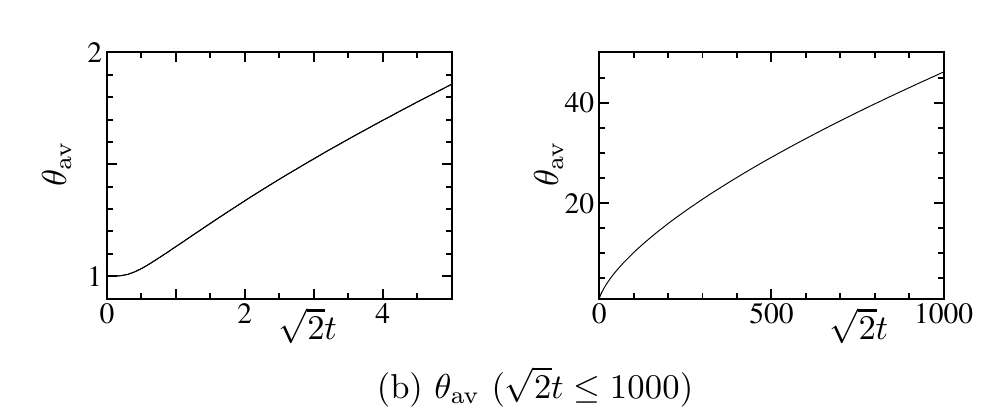} \\
\vspace*{1mm}
\includegraphics[scale=0.95]{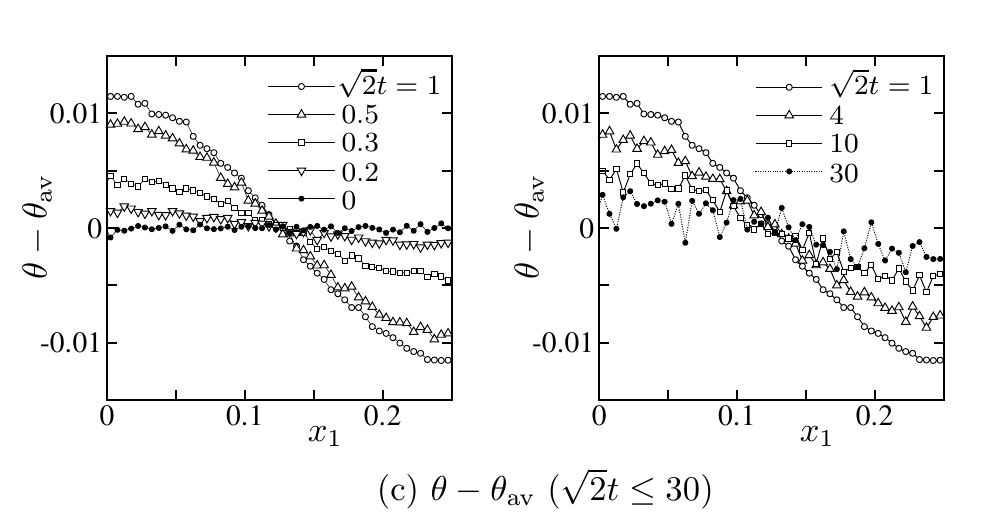} \\
\vspace*{1mm}
\caption{Boltzmann equation with $\Kn=0.1$ and $f_0=2$. 
The ensemble average over 96 independent runs is shown. 
The $\bar{\beta}(t)$ is the time average of $\beta$ 
over the interval $[t-500/\sqrt{2},t]$.}
\label{fig4.3}
\end{figure}

\addtocounter{figure}{-1}
\begin{figure}
\centering
\includegraphics[scale=0.95]{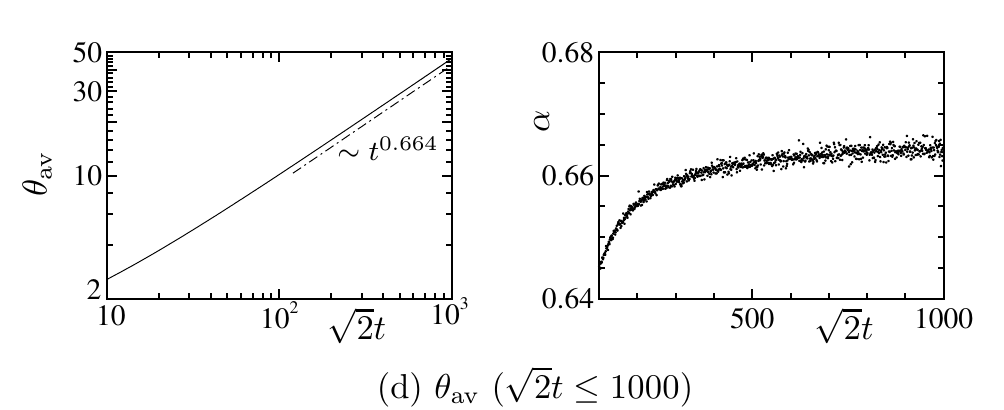} \\
\vspace*{1mm}
\includegraphics[scale=0.95]{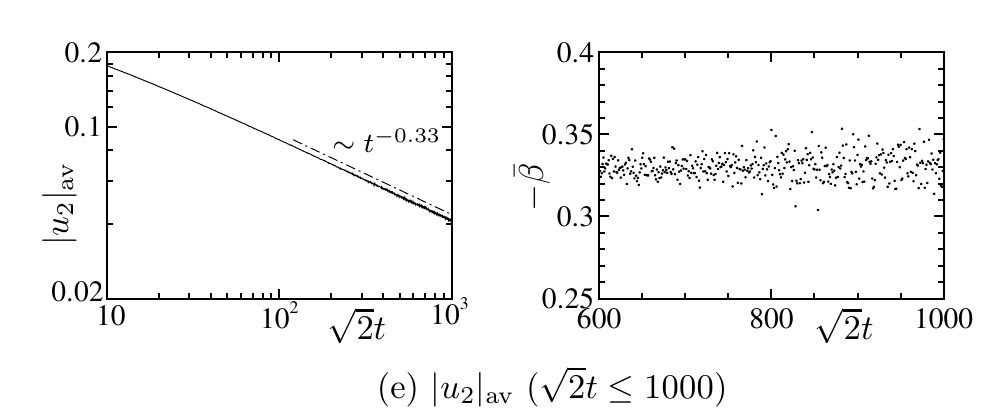} \\
\vspace*{1mm}
\caption{Continued}
\end{figure}

\begin{figure}
\centering
\includegraphics[scale=0.95]{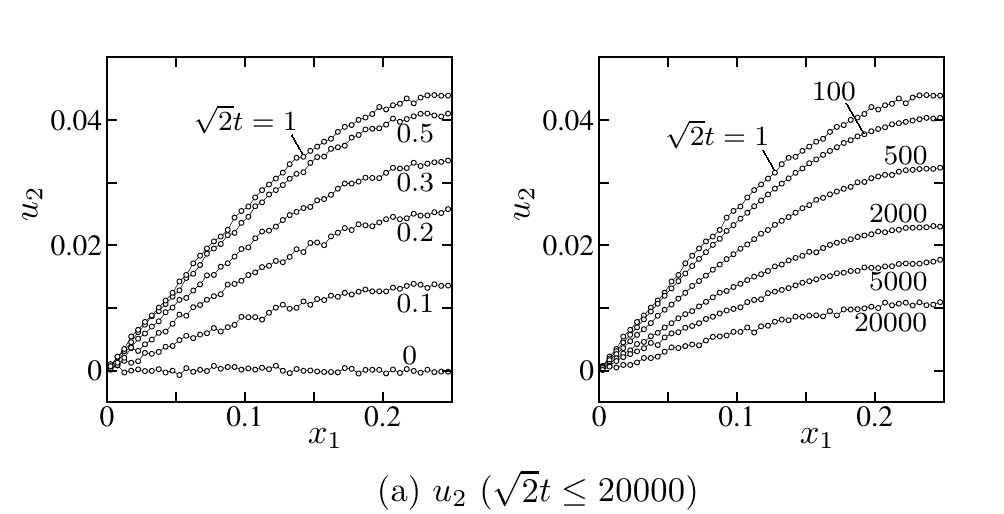} \\
\vspace*{1mm}
\includegraphics[scale=0.95]{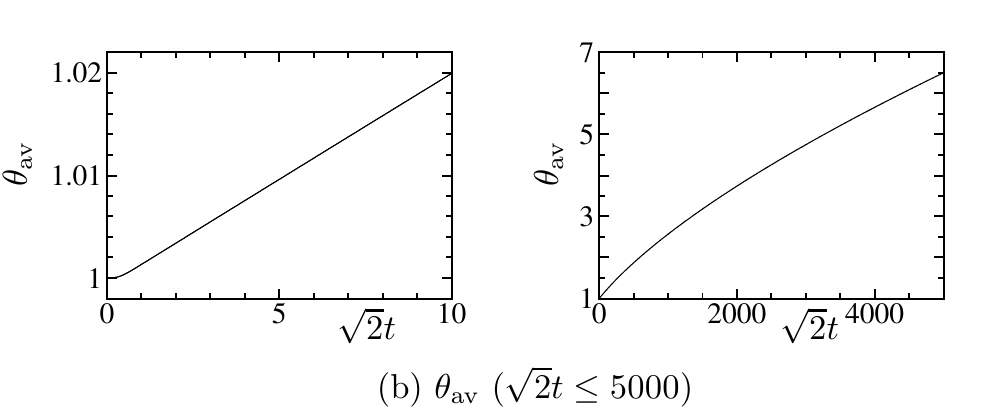} \\
\vspace*{1mm}
\includegraphics[scale=0.95]{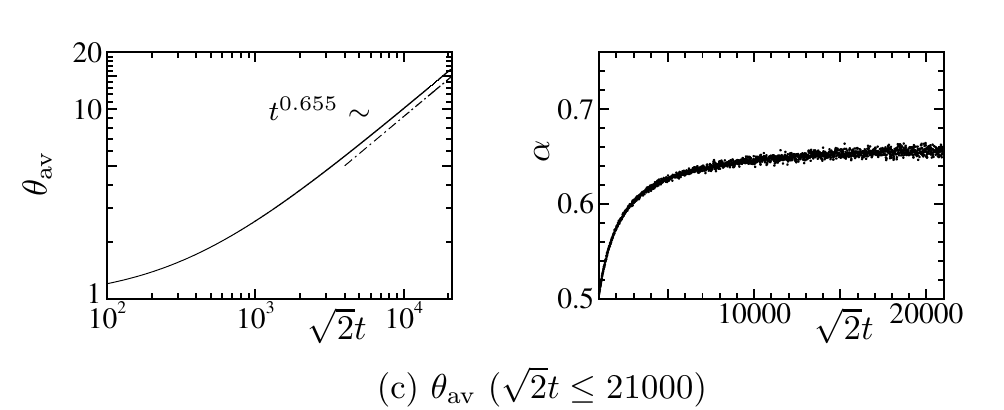} \\
\vspace*{1mm}
\includegraphics[scale=0.95]{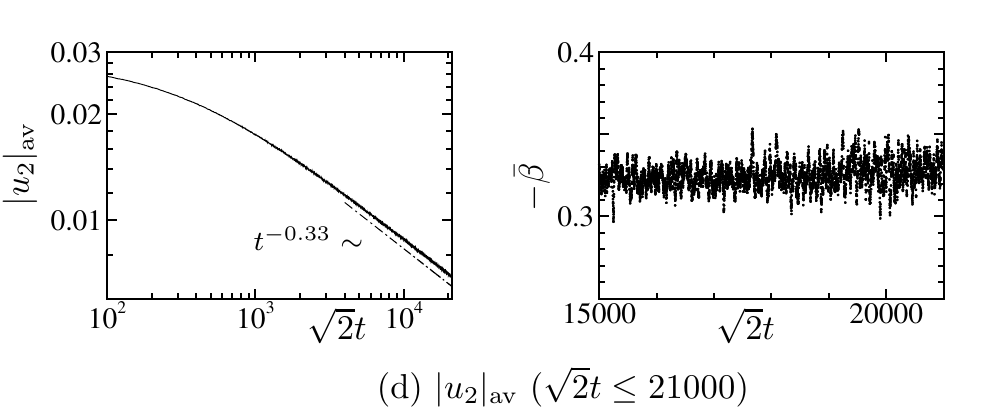} \\
\vspace*{1mm}
\caption{Boltzmann equation with $\Kn=0.1$ and $f_0=0.2$.
The result shown here is based on the ensemble average over $N$ independent
runs and time average over the interval $[t-C/\sqrt{2},\, t+C/\sqrt{2}]$,
where $N$ and $C$ are chosen appropriately depending on $t$. The details
are given in the main text.
The $\bar{\beta}$ is the average of $\beta$ over the time
interval $[t-10000/\sqrt{2},\, t]$.
}
\label{fig4.4}
\end{figure}

Finally, we present some results based on the Boltzmann equation. Let us
replace the right-hand side, $(1/\mathrm{Kn})\mathcal{C}_{\mathrm{BGK}}(F)$,
of the basic equation with $(1/\mathrm{Kn})\mathcal{C}(F)$, where
$\mathcal{C}$ is the dimensionless Boltzmann collision operator for hard-sphere
molecules, as in Sec.~1. The mean free path used here to define $\Kn$ is
$(\sqrt{2}\pi d^2n_0)^{-1}$ with $d$ the diameter of a molecule and $n_0$ 
the reference molecular number density, which is related to $\ell_0$ in Sec.~1 
[cf. (\ref{DimLessB})] as $(2\sqrt{2}\pi)^{-1}\ell_0$.
We employ the direct simulation Monte Carlo
(DSMC) method as the solution method. The method is a particle and stochastic
one, so that it contains the inherent statistical fluctuations. For steady
problems, we can take the time average over a long interval of time to
reduce the fluctuations. But the time-averaging does not work for
time-dependent problems as the present one. The only possible way to reduce
them is to perform many independent runs and take an ensemble average over
the runs. The method is also not appropriate for describing small quantities
because they are hidden in the fluctuations. In fact, it is impossible to
obtain $\theta-\theta_{av}$, $\rho$, and $u_1$. Nevertheless, we show in
Figs.~\ref{fig4.3} and \ref{fig4.4} the results for the same cases as
Figs.~\ref{fig4.1} and \ref{fig4.2}, respectively.
That is, Fig.~\ref{fig4.3} is for $\mathrm{Kn}=0.1$, $f_0=2$, and
Fig.~\ref{fig4.4} for $\mathrm{Kn}=0.1$, $f_0=0.2$. Figures \ref{fig4.3}(a)
and \ref{fig4.4}(a) correspond respectively to Figs.~\ref{fig4.1}(a) and
\ref{fig4.2}(a); Figs. \ref{fig4.3}(b) and \ref{fig4.4}(b) to
Figs.~\ref{fig4.1}(b) and \ref{fig4.2}(b);
Fig.~\ref{fig4.3}(c) to Fig.~\ref{fig4.1}(c)
(it is impossible to obtain the corresponding figure
for $f_0=0.2$);  Figs. \ref{fig4.3}(d) and \ref{fig4.4}(c) to
Figs.~\ref{fig4.1}(f) and \ref{fig4.2}(f); and
Figs.~\ref{fig4.3}(e) and \ref{fig4.4}(d) to Figs.~\ref{fig4.1}(g) and
\ref{fig4.2}(g). Figure \ref{fig4.3} shows the result of
the ensemble average over 96 independent runs. In addition, in
Fig.~\ref{fig4.3}(e),
$\bar{\beta}$, which is the average of $\beta$ over the time interval
$[t-500/\sqrt{2},\, t]$, is shown instead of $\beta$ itself, since it is
impossible to obtain the plot of $\beta$ in a reasonable form.
In Fig.~\ref{fig4.4},
we show the result based on the ensemble average over 96 independent runs for
$0 \le t \le 100/\sqrt{2}$, that based on the ensemble average over 24
independent runs and the time average over the interval
$[t-5/\sqrt{2},\, t+5/\sqrt{2}]$ for $100/\sqrt{2} \le t \le 5000/\sqrt{2}$,
and that based on the ensemble average over 12
independent runs and the time average over the interval
$[t-10/\sqrt{2},\, t+10/\sqrt{2}]$ for $5000/\sqrt{2} \le t$.
The $\bar{\beta}$ in Fig.~\ref{fig4.4}(d) is the average of $\beta$ over the
time interval $[t-10000/\sqrt{2},\, t]$. The computed time in Fig.~\ref{fig4.3}
(or Fig.~\ref{fig4.4}) is much shorter than that in Fig.~\ref{fig4.1}
(or Fig.~\ref{fig4.2}) because the
longer computation is practically impossible. However, the convergence
of $\alpha$ seems to be faster for hard-sphere molecules. It is likely
that $\alpha$ tends to approach about $0.665$ in Fig.~\ref{fig4.3}(d) and about
$0.66$ in Fig.~\ref{fig4.4}(c). Therefore, irrespective of the model of the
collision term, we have the asymptotic behavior like
$\theta_{av} \approx C_\theta t^{0.66}$ for large $t$.


\bs{4.3 Interpretation of the numerical results}

Next, in order to understand the slow increase of the temperature and the
slow decrease of the flow speed, we try a rough discussion on the basis
of the compressible Navier--Stokes equations.
The numerical results show that the flow is almost unidirectional, i.e.,
$u \approx (0,u_2, 0)$. Therefore, let us assume that the flow is
unidirectional, i.e., $u= (0,u_2, 0)$ and the problem in spatially
one-dimensional ($\partial/\partial x_2=\partial/\partial x_3=0$).
Then, the compressible Navier--Stokes equations
reduce to the following equations:
\begin{subequations}\label{CNS}
\begin{align}
& \frac{\partial \rho}{\partial t} = 0,
\nonumber \\
& \frac{\partial p}{\partial x_1} = 0,
\nonumber \\
& \frac{\partial u_2}{\partial t} = \frac{1}{\rho}
\frac{\partial}{\partial x_1}
\left( \mu(\theta) \frac{\partial u_2}{\partial x_1} \right)
+ g_0 \sin 2\pi x_1,
\nonumber \\
& \frac{\partial \theta}{\partial t}
= \frac{2}{3} \frac{1}{\rho} \frac{\partial}{\partial x_1}
\left( \kappa(\theta) \frac{\partial \theta}{\partial x_1} \right)
+ \frac{2}{3} \frac{\mu(\theta)}{\rho} 
\left( \frac{\partial u_2}{\partial x_1} \right)^2,
\nonumber
\end{align}
\end{subequations}
where a suitable nondimensionalization has been made, and
$g_0$ is a constant. In addition, 
$\mu(\theta)$ and $\kappa(\theta)$ are, respectively, dimensionless
forms of the viscosity and thermal conductivity and are
functions of $\theta$. Corresponding to the initial-boundary-value problem
of the BGK model solved numerically, the above equations should be
considered in the interval $-1/4 < x_1 < 1/4$ with the Neumann conditions
\begin{align}
& \frac{\partial u_2}{\partial x_1} = 0,\qquad
\frac{\partial \theta}{\partial x_1} = 0,\qquad
\mathrm{at}\;\; x_1=\pm \frac{1}{4}.
\nonumber
\end{align}

We should keep in mind that the compressible Navier--Stokes equations listed
above hold only approximately because $u_1$ is not exactly zero in
the numerical solution based on the BGK model.
Now, we assume that $\rho \approx \mathrm{const}$ and
$\partial u_2/\partial t$ is negligibly small in the third equation.
Integrating this equation with respect to $x_1$ and taking into account
the boundary condition, we have
$\partial u_2/\partial x_1 \approx (g_0 \rho/2\pi \mu) \cos 2\pi x_1$.
We insert the
expression of $\partial u_2/\partial x_1$ in the forth equation and integrate
it with respect to $x_1$ from $x_1=-1/4$ to $1/4$ assuming that
$\rho \approx \mathrm{const}$ and $\mu(\theta) \approx \mu(\theta_{av})$.
Then, we obtain
\begin{align}
\mu(\theta_{av}) \frac{\partial \theta_{av}}{\partial t}
\approx \frac{g_0^2 \rho}{3 (2\pi)^2}.
\nonumber
\end{align}
Suppose that $\mu(\theta_{av}) = C_\mu \theta_{av}^\delta$. Then, for
the initial condition $\theta=1$ at $t=0$, we obtain
$\theta_{av}^{1+\delta} \approx C_1 t + 1$ with
$C_1 = (1+\delta) g_0^2 \rho/12\pi^2 C_\mu$,
or for large $t$,
\begin{align}
& \theta_{av} \approx (C_1 t + 1)^{\frac{1}{1+\delta}}
\approx C_1^{\frac{1}{1+\delta}} t^{\frac{1}{1+\delta}}.
\nonumber
\end{align}
Since $\partial \theta/\partial t=O( \partial \theta_{av}/\partial t )
= O(t^{-\delta/(1+\delta)})$ and
$\mu(\partial u_2/\partial x_1)^2=O(1/\mu(\theta_{av}))
= O(t^{-\delta/(1+\delta)})$, they are small in the energy equation.
If we neglect these terms in the energy equation, we obtain
$\partial \theta/\partial x_1 \approx 0$, i.e.,
$\theta \approx \theta_{av}$. Then, from the expression
$\partial u_2/\partial x_1 \approx (g_0 \rho/2\pi \mu) \cos 2\pi x_1$
and the boundary condition,
$u_2$ is obtained as $u_2 \approx (g_0 \rho/4\pi^2 \mu) \sin 2\pi x_1$.
To summarize, we obtain
\begin{align}
\theta \approx C_\theta t^{\frac{1}{1+\delta}}, \qquad
u_2 \approx C_u t^{-\frac{\delta}{1+\delta}} \sin 2\pi x_1,
\qquad (\text{for}\;\; t \gg 1),
\nonumber
\end{align}
with new constants $C_\theta$ and $C_u$. Because $\delta=1$ for the BGK model,
we have $\theta \approx C_\theta t^{1/2}$ and
$u_2 \approx C_u t^{-1/2} \sin 2\pi x_1$
for large $t$. On the other hand, since $\delta=0.5$ for hard-sphere molecules,
$\theta$ and $u_2$ behave as $\theta \approx C_\theta t^{2/3}$ and
$u_2 \approx C_u t^{-1/3} \sin 2\pi x_1$. Although this conclusion for
hard-sphere molecules is consistent with the numerical result based on the
Boltzmann equation, it does not coincide precisely with the numerical
result based on the BGK model. It is natural because the argument is too
sketchy. However, it provides qualitative explanation
for the slow increase of the temperature and its uniformity and for
the slow decrease of the flow speed and the sinusoidal flow-velocity profile.
More specifically, as the result of the temperature rise caused by the viscous
heating, the viscosity and the thermal conductivity increase. The high
thermal conductivity leads to a uniform $\theta$. On the other hand, the
high viscosity tends to prevent the external force from causing the gas
flow, so that the flow speed decreases as the temperature increases. However,
the decrease of the flow speed results in the decrease of the viscous heating.
The rough estimate based on the compressible Navier--Stokes equations
show that, as time proceeds, the amount of viscous heating decreases, but the
total amount of heat produced from the initial time increases indefinitely.
Thus, the temperature continues to increase indefinitely.


\setcounter{chapter}{5}                           
\setcounter{equation}{0} 

\BS{Conclusion}

We have discussed the numerous similarities between the steady problem for the Boltzmann equation and for the Navier-Stokes-Fourier system. In particular, the presence of the viscous heating term in the temperature equation depends
on the scaling of the external divergence-free force field. However, we have observed a significant difference between the steady Navier-Stokes equation and the steady Boltzmann equation: while the steady Navier-Stokes equation with 
prescribed divergence-free external force always have a solution, either in the periodic setting or in the case of a bounded spatial domain with Dirichlet boundary condition, the steady Boltzmann equation with nonzero divergence-free 
external force field cannot have a nonzero solution. We have proposed a physical explanation for this difference, based on the long time behavior of the evolution Boltzmann equation with nonzero divergence-free external force field in
the periodic setting. Our numerical computations based on the BGK model suggest that the temperature field in the gas increases indefinitely, so that the gas cannot reach a steady state.

While we have presented our results on the hydrodynamic limit of the steady Boltzmann equation in the case of a hard sphere gas, the same results should remain true in the case of hard cutoff potentials (in the sense of Grad).

Finally, our numerical simulations suggest the following problem, which we believe is open at the time of this writing.

\smallskip
\textbf{Problem:} Consider the evolution Boltzmann equation set in the periodic box, with a prescribed nonzero divergence-free external force field (for instance $f(x_1,x_2,x_3)=(0,f_0\sin(2\pi x_1),0)$). What is the asymptotic behavior
of the temperature field averaged in the space variable in the long time limit? In particular, does there exist $\a>0$ such that the average temperature field is asymptotically equivalent to $Ct^\a$ for some $C>0$ as the time variable
$t$ tends to infinity?

\smallskip
\textbf{Note added after publication:} After the present article was published by the Bulletin of the Institute of Mathematics of Academia Sinica, we became aware of the reference \cite{GuoEspoMarra2}. This reference provides a complete
proof of the asymptotic limit described in Theorem 1 at the formal level. The result established in \cite{GuoEspoMarra2} assumes that the source term in the Navier-Stokes-Fourier limiting system is small.


\bigskip
\noindent
\textbf{Acknowledgement.} This work was started during a visit of the first author at Ecole polytechnique, and completed while the second author was visiting Kyoto University. We are grateful to both institutions for their hospitality and
support.


\setcounter{chapter}{6}                           
\setcounter{equation}{0} 

\BS{Appendix: Properties of $A$ and $B$}

\begin{lemma}\lb{L-PtyAB}
The tensor field $A$ and the vector field $B$ defined by the formulas 
$$
A(v):=v\otimes v-\tfrac13|v|^2I\,,\qquad B(v):=\tfrac12(|v|^2-5)v
$$
satisfy the following properties.

\smallskip
\noindent
(1) The orthogonality relations
$$
A\perp\Ker\cL\,,\quad B\perp\Ker\cL\,,\quad A\perp B
$$
hold componentwise in $L^2(\bR^3,Mdv)$.

\noindent
(2) There exists a unique tensor field $\hat A$ and a unique vector field $\hat B$ in $L^2(\bR^3;(1+|v|^2)Mdv)$ such that
$$
\ba
\cL\hat A=A&&\hbox{ and }\hat A\perp\Ker\cL\,,
\\
\cL\hat B=B&&\hbox{ and }\hat B\perp\Ker\cL\,.
\ea
$$

\noindent
(3) The tensor field $\hat A$ and the vector field $\hat B$ are of the form
$$
\hat A(v)=\a(|v|)A(v)\,,\qquad\hat B(v)=\b(|v|)B(v)
$$
for a.e. $v\in\bR^3$.
\end{lemma}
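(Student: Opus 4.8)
\medskip
\noindent\emph{Proof proposal.} The plan is to dispose of (1) and (2) quickly from the Hilbert-space picture of Lemma \ref{L-Hilb}, and to concentrate on (3), which carries the real content. For (1), I would check the orthogonality relations by parity together with a handful of Gaussian moments. Since $\Ker\cL=\Span\{1,v_1,v_2,v_3,|v|^2\}$, writing $A_{ij}=v_iv_j-\tfrac13|v|^2\de_{ij}$ reduces the relevant pairings to $\la v_iv_j\ra=\de_{ij}$, $\la|v|^2\ra=3$, $\la v_iv_j|v|^2\ra=5\de_{ij}$ and $\la|v|^4\ra=15$, whence $\la A_{ij}\ra=0$ and $\la A_{ij}|v|^2\ra=5\de_{ij}-5\de_{ij}=0$, while $\la A_{ij}v_k\ra=0$ because the integrand is odd. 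The same parity remark gives $\la B_i\ra=\la B_i|v|^2\ra=0$, and $\la B_iv_j\ra=\tfrac12(\la|v|^2v_iv_j\ra-5\la v_iv_j\ra)=0$; finally $A\perp B$ componentwise since $A$ is even and $B$ is odd in $v$.

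For (2), I would simply invoke the Fredholm alternative of Lemma \ref{L-Hilb}. By (1) every component of $A$ and $B$ lies in $(\Ker\cL)^\perp=\Ran\cL$, so each of $\cL\hat A_{ij}=A_{ij}$ and $\cL\hat B_i=B_i$ is solvable; requiring $\hat A,\hat B\perp\Ker\cL$ pins down the unique solution in $(\Ker\cL)^\perp$, which lies in the domain $L^2(\bR^3;(1+|v|^2)Mdv)$ of $\cL$ by construction.

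The substance is (3), which I would derive from rotational symmetry. Because $M$ is radial and $|(v-v_*)\cdot\om|$ is $O_3(\bR)$-invariant, $\cL$ intertwines with rotations, $\cL(\phi\circ R)=(\cL\phi)\circ R$ for $R\in O_3(\bR)$ (as used in Section 2.2). I would let $R$ act on tensor and vector fields by $(R\cdot T)(v)=R\,T(R^{-1}v)\,R^T$ and $(R\cdot W)(v)=R\,W(R^{-1}v)$, and record two facts: $A$ and $B$ are fixed by these actions, since $A(R^{-1}v)=R^TA(v)R$ and $B(R^{-1}v)=R^TB(v)$; and $\cL$ commutes with the actions componentwise, while the actions preserve $\perp\Ker\cL$ (because $\Ker\cL$ is rotation-invariant and $M$ is $R$-invariant). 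Hence $R\cdot\hat A$ again solves $\cL X=A$, $X\perp\Ker\cL$, so the uniqueness from (2) forces $R\cdot\hat A=\hat A$, and likewise $R\cdot\hat B=\hat B$: both fields are $O_3(\bR)$-equivariant.

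It remains to classify the equivariant fields, and this is where I expect the real work to be. Evaluating $\hat A(v_0)=R\hat A(v_0)R^T$ for $R$ in the stabilizer $O(2)$ of a fixed $v_0\ne0$ (with $\hat v_0=v_0/|v_0|$) shows that $\hat A(v_0)$ is a symmetric traceless matrix commuting with $O(2)$; irreducibility of the $O(2)$-action on $v_0^\perp$ (Schur) forces $\hat A(v_0)=a(I-\hat v_0\otimes\hat v_0)+c\,\hat v_0\otimes\hat v_0$, and tracelessness ($c=-2a$) makes it a multiple of $3\hat v_0\otimes\hat v_0-I$, i.e. of $A(v_0)$. The analogous step shows the only vectors fixed by that $O(2)$ are multiples of $v_0$, so $\hat B(v_0)$ is a multiple of $v_0$, hence of $B(v_0)$. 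Comparing two points of equal modulus via the full equivariance shows the proportionality factor depends on $|v|$ only, giving $\hat A=\a(|v|)A$ and $\hat B=\b(|v|)B$. The main obstacle is precisely this isotropic-tensor-function reduction — turning the pointwise statement that $\hat A(v_0)$ is a multiple of $A(v_0)$ into a single radial scalar $\a(|v|)$, and the matching reduction of a rotation-fixed vector to a radial multiple of $v$ (with the factor $\b$ absorbing the zero of $B$ on $|v|^2=5$) — rather than the symmetry input, which is routine.
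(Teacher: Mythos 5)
Your proposal is correct and follows exactly the route the paper takes: the paper disposes of (1) and (2) by citing Lemma 5.3 of \cite{FGBraga} and Hilbert's Fredholm alternative, and explicitly identifies the key to (3) as the invariance of $\cL$ under $O_3(\bR)$ (referring to \cite{DesvilleFG} for the first complete proof along these lines), which is precisely the equivariance-plus-uniqueness-plus-Schur argument you carry out. Your Gaussian-moment computations for (1) and your handling of the measure-zero sets where $A$ or $B$ vanish are both sound, so the proposal fills in correctly the details the paper delegates to its references.
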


Statement (1) is Lemma 5.3 in \cite{FGBraga}, while statement (3) is Lemma 5.4 in \cite{FGBraga}. Statement (3) has been used systematically in the literature on the Boltzmann equation, referring to \S 7.31 in \cite{CC}. However, the discussion
in \cite{CC} is incomplete. The key argument leading to the structure of $\hat A$ and $\hat B$ in statement (3) is the invariance of the linearized collision operator under the orthogonal group $O_3(\bR)$. It seems that the first complete proof of 
statement (3) following this idea is in \cite{DesvilleFG}. Statement (2) follows from Hilbert's lemma (the fact that the linearized collision operator $\cL$ satisfies the Fredholm alternative).

\begin{lemma}\lb{L-IntPtyAAhatBBhat} The tensor fields $A$ and $\hat A$, and the vector fields $B$ and $\hat B$ satisfy the following properties.

\smallskip
\noindent
(1) For each $i,j=1,2,3$, one has
$$
\la B_iB_j\ra=\de_{ij}\tfrac13\la\tfrac14(|v|^2-5)^2|v|^2\ra=\tfrac52\de_{ij}\,,
$$
and
$$
\la\hat B_iB_j\ra=\de_{ij}\tfrac13\la\tfrac14(|v|^2-5)^2|v|^2\b(|v|)\ra=\ka\de_{ij}\,,
$$
where
$$
\ba
\ka:=&\tfrac1{12}\la(|v|^2-5)^2|v|^2\b(|v|)\ra
\\
=&\tfrac13\la\hat B\cdot B\ra=\tfrac13\la\hat B\cdot\cL\hat B\ra>0\,.
\ea
$$
(2) For each $i,j,k,l=1,2,3$, one has
$$
\ba
\la A_{ij}A_{kl}\ra&=(\de_{ik}\de_{jl}+\de_{il}\de_{jk}-\tfrac23\de_{ij}\de_{kl})\tfrac1{15}\la|v|^4\ra
\\
&=(\de_{ik}\de_{jl}+\de_{il}\de_{jk}-\tfrac23\de_{ij}\de_{kl})
\ea
$$
and
$$
\la\hat A_{ij}A_{kl}\ra=\nu(\de_{ik}\de_{jl}+\de_{il}\de_{jk}-\tfrac23\de_{ij}\de_{kl})
$$
where
$$
\ba
\nu:=&\tfrac1{15}\la|v|^4\a(|v|)\ra
\\
=&\tfrac1{10}\la\hat A:A\ra=\tfrac1{10}\la\hat A:\cL\hat A\ra>0\,.
\ea
$$
(3) For each $i,j,k,l=1,2,3$, one has
$$
\la\hat B_iv_jA_{kl}\ra=\tfrac25\ka(\de_{ik}\de_{jl}+\de_{il}\de_{jk}-\tfrac23\de_{ij}\de_{kl})
$$
and
$$
\la\hat B_iv_j\hat A_{kl}\ra=c(\de_{ik}\de_{jl}+\de_{il}\de_{jk}-\tfrac23\de_{ij}\de_{kl})\,,
$$
with
$$
c:=\tfrac1{10}\la(\hat B\otimes v):\hat A\ra\,.
$$
\end{lemma}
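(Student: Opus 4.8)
The plan is to exploit the rotational invariance of the Gaussian weight $M$. Because $M$ is radial, for any scalar radial function $f=f(|v|)$ the averages $\la f\,v_iv_j\ra$ and $\la f\,v_iv_jv_kv_l\ra$ are isotropic tensors, so that
$$
\la f\,v_iv_j\ra=\tfrac13\de_{ij}\la f|v|^2\ra\,,\qquad \la f\,v_iv_jv_kv_l\ra=\tfrac1{15}\la f|v|^4\ra(\de_{ij}\de_{kl}+\de_{ik}\de_{jl}+\de_{il}\de_{jk})\,,
$$
the scalar coefficients being determined by contracting indices. These two identities, the representations $\hat A=\a(|v|)A$ and $\hat B=\b(|v|)B$ furnished by Lemma \ref{L-PtyAB}(3), and the elementary Gaussian moments $\la|v|^2\ra=3$, $\la|v|^4\ra=15$, $\la|v|^6\ra=105$ reduce every identity in the statement to a one-dimensional integral.

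For statement (1) I would write $B_iB_j=\tfrac14(|v|^2-5)^2v_iv_j$ and apply the first isotropy formula; the scalar coefficient $\tfrac1{12}\la(|v|^2-5)^2|v|^2\ra$ then equals $\tfrac52$, since $\la(|v|^2-5)^2|v|^2\ra=105-150+75=30$. The computation of $\la\hat B_iB_j\ra$ is identical after inserting the weight $\b(|v|)$. For statement (2) I would either expand $\la A_{ij}A_{kl}\ra$ with the rank-four formula together with $\la|v|^2v_iv_j\ra=5\de_{ij}$, or, more efficiently, note that the average is a trace-free isotropic four-tensor determined up to a single scalar which is fixed by the contraction $\la A:A\ra=\tfrac23\la|v|^4\ra=10$; the same argument carrying the extra weight $\a(|v|)$ gives $\la\hat A_{ij}A_{kl}\ra$. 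The positivity of $\ka$ and $\nu$ is then immediate once they are rewritten as $\tfrac13\la\hat B\cdot\cL\hat B\ra$ and $\tfrac1{10}\la\hat A:\cL\hat A\ra$, because $\cL\ge0$ and neither $\hat A$ nor $\hat B$ vanishes (indeed $\cL\hat A=A\ne0$ and $\cL\hat B=B\ne0$).

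For statement (3) I would again reduce by isotropy: writing $\hat B_iv_jA_{kl}=g(|v|)v_iv_j(v_kv_l-\tfrac13|v|^2\de_{kl})$ with $g(|v|)=\tfrac12\b(|v|)(|v|^2-5)$, the rank-four formula collapses the average to $\tfrac1{15}\la g|v|^4\ra(\de_{ik}\de_{jl}+\de_{il}\de_{jk}-\tfrac23\de_{ij}\de_{kl})$, the trace-free structure in $(k,l)$ surviving automatically. The main obstacle, and the only genuinely nontrivial point, is to identify this coefficient with $\tfrac25\ka$: this amounts to the equality $\la\b(|v|)(|v|^2-5)|v|^4\ra=\la\b(|v|)(|v|^2-5)^2|v|^2\ra$, whose difference equals $5\la\b(|v|)(|v|^2-5)|v|^2\ra=10\la\hat B\cdot v\ra$, and this vanishes precisely because $\hat B\perp\Ker\cL$ while $v\in\Ker\cL$. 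The second identity, for $\la\hat B_iv_j\hat A_{kl}\ra$, follows from the same reduction with $g$ replaced by $\tfrac12\a(|v|)\b(|v|)(|v|^2-5)$; here the coefficient matches the stated $c=\tfrac1{10}\la(\hat B\otimes v):\hat A\ra$ directly, since $v_iv_jA_{ij}=\tfrac23|v|^4$ identifies the contraction with that definition and no orthogonality input is required.
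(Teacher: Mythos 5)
Your proposal is correct and follows essentially the same route as the paper: isotropy of Gaussian averages of radial functions times monomials in $v$, the representations $\hat A=\a(|v|)A$ and $\hat B=\b(|v|)B$ from Lemma \ref{L-PtyAB}, and the orthogonality $\hat B\perp\Ker\cL$ to identify the fourth-moment coefficient with $\tfrac25\ka$ (the paper pins down the same scalar by contracting with $\de_{kl}$ and using $\Tr A=0$, while you match the moments directly via $\la\hat B\cdot v\ra=0$ --- a cosmetic difference). The only substantive addition is that you actually carry out the computations for statements (1) and (2), which the paper merely cites from \cite{FGBraga} and \cite{BGL2}.
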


Statement (2) is Lemma 3.4 in \cite{FGBraga} (proved in Appendix 2 of \cite{FGBraga}). See formulas (4.10) and (4.13b) in \cite{BGL2} for statement (1), which is proved by a similar (though simpler) argument as statement (2). (Notice 
the slight difference in normalization in the definitions of $\ka$ in \cite{BGL2}Êand here).

\begin{proof}[Proof of statement (3)] 
By statement (3) in the previous lemma
$$
\ba
\la\hat B_iv_jA_{kl}\ra=&\la\b(|v|)\tfrac12(|v|^2-5)v_iv_jv_kv_l\ra
\\
&-\de_{kl}\la\b(|v|)\tfrac16(|v|^2-5)|v|^2v_iv_j\ra
\\
=&\la\b(|v|)\tfrac12(|v|^2-5)v_iv_jv_kv_l\ra
\\
&-\de_{kl}\la\b(|v|)\tfrac16(|v|^2-5)^2v_iv_j\ra\,,
\ea
$$
where the second equality follows form the fact that $\hat B\perp\Ker\cL$. Then, according to Lemma 4.3 in \cite{FGBraga}, one has
$$
\la\b(|v|)\tfrac12(|v|^2-5)v_iv_jv_kv_l\ra=\l(\de_{ij}\de_{kl}+\de_{ik}\de_{jl}+\de_{il}\de_{jk})
$$
for some $\l\in\bR$, while, by statement (1) of the previous lemma,
$$
\la\b(|v|)\tfrac16(|v|^2-5)^2v_iv_j\ra=\tfrac23\la\hat B_iB_j\ra=\tfrac23\ka\de_{ij}\,.
$$
Hence
$$
\la\hat B_iv_jA_{kl}\ra=\l(\de_{ij}\de_{kl}+\de_{ik}\de_{jl}+\de_{il}\de_{jk})-\tfrac23\ka\de_{ij}\de_{kl}\,,
$$
and since 
$$
(5\l-2\ka)\de_{ij}=\la\hat B_iv_jA_{kk}\ra=\la\hat B_iv_j\Tr(A)\ra=0\,,
$$
we conclude that $\l=\tfrac25\ka$, which immediatly implies the first equality in statement (3). The second equality is obtained in exactly the same manner. Contracting $i,k$ and $j,l$, one finds the announced formula for $c$.
\end{proof}

\begin{lemma}\lb{L-IntBC}
Let $C$ be the tensor field defined by the formula
$$
C(v):=\tfrac12(v\otimes v\otimes v-3v\otimes I)\,.
$$
Then, for each $i,j,k,l=1,2,3$, one has
$$
\la B_i\otimes C_{jkl}\ra=\tfrac12(\de_{ij}\de_{kl}+\de_{ik}\de_{jl}+\de_{il}\de_{jk})\,.
$$
\end{lemma}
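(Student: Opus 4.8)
The plan is to reduce everything to Gaussian moments and to exploit the orthogonality $B\perp\Ker\cL$ recorded in Lemma \ref{L-PtyAB}. First I would write out the two factors explicitly, namely $B_i=\tfrac12(|v|^2-5)v_i$ and $C_{jkl}=\tfrac12(v_jv_kv_l-3v_j\de_{kl})$, so that the quantity to be computed splits into two pieces:
$$
\la B_i C_{jkl}\ra=\tfrac14\la(|v|^2-5)v_iv_jv_kv_l\ra-\tfrac34\de_{kl}\la(|v|^2-5)v_iv_j\ra\,.
$$

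The second piece vanishes. Indeed $\la(|v|^2-5)v_iv_j\ra=2\la B_iv_j\ra$, and since $v_j\in\Ker\cL$ while $B\perp\Ker\cL$ by statement (1) of Lemma \ref{L-PtyAB}, this inner product is zero. (Alternatively, one checks directly that $\la|v|^2v_iv_j\ra=5\de_{ij}=5\la v_iv_j\ra$, so that the bracket cancels.) Thus only the first, fully symmetric, piece survives, and the claim will follow once it is evaluated.

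For that first piece I would argue by invariance rather than grind through a sixth-order Wick expansion. The tensor $\la(|v|^2-5)v_iv_jv_kv_l\ra$ is invariant under the orthogonal group, because the weight $M$ is radial, and it is fully symmetric in $i,j,k,l$; hence the three a priori independent isotropic constants in $a\de_{ij}\de_{kl}+b\de_{ik}\de_{jl}+c\de_{il}\de_{jk}$ must coincide, so it is of the form $\l(\de_{ij}\de_{kl}+\de_{ik}\de_{jl}+\de_{il}\de_{jk})$ for a single scalar $\l$. To pin down $\l$ I would contract on the index pairs $(i,j)$ and $(k,l)$: the left-hand side becomes $\la(|v|^2-5)|v|^4\ra=\la|v|^6\ra-5\la|v|^4\ra=105-75=30$, using the standard Gaussian values $\la|v|^4\ra=15$ and $\la|v|^6\ra=105$, while the same contraction of $\de_{ij}\de_{kl}+\de_{ik}\de_{jl}+\de_{il}\de_{jk}$ gives $9+3+3=15$. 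Hence $\l=2$, and $\tfrac14\cdot2=\tfrac12$ yields exactly the asserted identity.

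The computation is essentially routine; the only mild obstacle is keeping the combinatorics honest, i.e. justifying that full symmetry collapses the isotropic reduction to the single constant $\l$ and computing the contraction factor $15$ correctly. These are standard steps, entirely parallel to the arguments already used for $A$, $\hat A$, $B$ and $\hat B$ in the Appendix, so no genuinely new difficulty arises.
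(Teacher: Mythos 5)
Your proof is correct and follows essentially the same route as the paper: both use $B\perp\Ker\cL$ to discard the $-3v\otimes I$ part of $C$, reduce $\la(|v|^2-5)v_iv_jv_kv_l\ra$ to a single isotropic constant by symmetry (the paper cites Lemma 4.3 of the reference where you argue it directly), and fix that constant by contraction. The only cosmetic difference is that you contract both index pairs and evaluate the raw Gaussian moments $\la|v|^4\ra=15$, $\la|v|^6\ra=105$, whereas the paper contracts one pair and identifies the result with $\la B_iB_j\ra=\tfrac52\de_{ij}$; both give $\tfrac12$.
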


\begin{proof}
First
$$
\la B_i\otimes C_{jkl}\ra=\tfrac12\la B_iv_jv_kv_l\ra\,,
$$
since $B\perp\Ker\cL$. Thus
$$
\ba
\tfrac12\la B_iv_jv_kv_l\ra&=\tfrac14\la(|v|^2-5)v_iv_jv_kv_l\ra
\\
&=\mu(\de_{ij}\de_{kl}+\de_{ik}\de_{jl}+\de_{il}\de_{jk})
\ea
$$
for some $\mu\in\bR$, by Lemma 4.3 in \cite{FGBraga}. By contraction on the indices $k,l$
$$
5\mu\de_{ij}=\tfrac12\la B_iv_j|v|^2\ra=\la B_iB_j\ra=\tfrac52\de_{ij}
$$
where the first equality follows from the orthogonality relation $B\perp\Ker\cL$ by statement (1) in Lemma \ref{L-PtyAB}. Hence $\mu=\tfrac12$, and this immediatly implies the desired identity.
\end{proof}

\newpage


\lhead[\small\thepage\fancyplain{}\leftmark\hfil$[$]{}
\rhead[]{\small \fancyplain{}\rightmark\small\thepage} \cfoot{}
\markboth {\hfill{\small\rm\Bauthor}\hfill} {\hfill {\small\rm\Bshorttitle}\hfill}



\renewcommand\bibname{\centerline{\large\bf References}}
\fontsize{9}{11.0pt plus1pt minus .8pt}\selectfont


\begin{thebibliography}{99}
\markboth {\hfill{\small \rm \Bauthor}\hfill} {\hfill {\small\rm\Bshorttitle} \hfill}


\bibitem{ArkeNouri}
L. Arkeryd and A. Nouri, 
The stationary Boltzmann equation in $\bR^n$ with given indata, 
\textit{Ann. Sc. Norm. Sup. Pisa Cl. Sci.} (5) \textbf{1} (2002), 359--385.

\bibitem{ArkeNouriTC}
L. Arkeryd and A. Nouri, 
On a Taylor-Couette type bifurcation for the stationary nonlinear Boltzmann equation,
\textit{J. Stat. Phys.} \textbf{124} (2006), 401-443. 

\bibitem{BGL0}
C. Bardos, F. Golse and C.D. Levermore,
Sur les limites asymptotiques de la th\'eorie cin\'etique conduisant \`a la dynamique des fluides incompressibles, 
\textit{C. R. Math. Acad. Sci. Paris} \textbf{309} (1989), 727--732.

\bibitem{BGL1}
C. Bardos, F. Golse and C.D. Levermore,
Fluid dynamic limits of the Boltzmann equation I, 
\textit{J. Statist. Phys.} \textbf{63} (1991), 323--344.

\bibitem{BGL2}
C. Bardos, F. Golse and C.D. Levermore,
Fluid dynamic limits of kinetic equations II: Convergence proofs for the Boltzmann equation, 
\textit{Comm. Pure Appl. Math.} \textbf{46} (1993), 667--753.

\bibitem{BGL3}
C. Bardos, F. Golse and C.D. Levermore,
The acoustic limit for the Boltzmann equation, 
\textit{Arch. Ration. Mech. Anal.} \textbf{153} (2000), 177--204.

\bibitem{BardosFGPaillard}
C. Bardos, F. Golse and L. Paillard,
The incompressible Euler limit of the Boltzmann equation with accommodation boundary condition, 
\textit{Commun. Math. Sci.} \textbf{10} (2012), 159--190.

\bibitem{BLUY}
C. Bardos, C.D. Levermore, S. Ukai and T. Yang,
Kinetic equations : fluid dynamical limits and viscous heating, 
\textit{Bull. Inst. Math. Acad. Sin.} (N. S.) \textbf{3} (2008), 1--49.

\bibitem{BardosUkai}
C. Bardos and S. Ukai,
The classical incompressible NavierÐStokes limit of the Boltzmann equation, 
\textit{Math. Models Methods Appl. Sci.} \textbf{1} (1991), 235--257.

\bibitem{Caflisch80}
R.E. Caflisch,
The fluid dynamic limit of the nonlinear Boltzmann equation, 
\textit{Comm. Pure Appl. Math.} \textbf{33} (1980), 651--666.

\bibitem{Cerci75}
C. Cercignani,
ÒTheory and applications of the Boltzmann equationÓ, 
Scottish Academic Press, Edinburgh, London, 1975.

\bibitem{CC}
S. Chapman, T.G. Cowling, 
\textit{The Mathematical Theory of Non-Uniform Gases}, 3rd edition,
Cambridge University Press, 1970.

\bibitem{DMEL}
A. DeMasi, R. Esposito, J. Lebowitz,
Incompressible Navier-Stokes and Euler Limits of the Boltzmann Equation, 
\textit{Comm. Pure Appl. Math.} \textbf{42}) (1990, 1189--1214.

\bibitem{DR}
G. de Rham,
\textit{Differentiable manifolds. Forms, Currents, Harmonic Forms},
Springer-Verlag, Berlin, Heidelberg, 1984.

\bibitem{DesvilleFG}
L. Desvillettes, F. Golse,
A remark concerning the Chapman-Enskog asymptotics, 
in ``Advances in Kinetic Theory and Computing'', B. Perthame, ed., 
World Scientific, River Edge, NJ (1994), 191--209.

\bibitem{DiPernaPLL}
R.J. DiPerna and P.-L. Lions,
On the Cauchy problem for the Boltzmann equation: Global existence and weak stability results, 
\textit{Ann. of Math.} \textbf{130} (1990), 321--366.

\bibitem{GuoEspoMarra}
R. Esposito, Y. Guo, C. Kim and R. Marra, 
Non-isothermal boundary in the Boltzmann theory and Fourier law,
\textit{Comm. Math. Phys.} \textbf{323} (2013), 177--239. 

\bibitem{GuoEspoMarra2}
R. Esposito, Y. Guo, C. Kim and R. Marra, 
Stationary solutions to the Boltzmann equation in the hydrodynamic limit,
preprint {\tt arXiv:1502.05324}.

\bibitem{Fefferman}
C.L. Fefferman,
Existence and smoothness in the Navier-Stokes equations,
{\tt http://www.claymath.org/sites/default/files/navierstokes.pdf}.

\bibitem{Glassey}
R.T. Glassey,
``The Cauchy Problem in Kinetic Theory'', 
SIAM, Philadelphia, PA (1996).

\bibitem{FGCamwa}
F. Golse,
From the Boltzmann equation to the Euler equations in the presence of boundaries, 
\textit{Computers and Math. with Applications} \textbf{65} (2013), 815--830.

\bibitem{FGBraga}
F. Golse,
Fluid Dynamic Limits of the Kinetic Theory of Gases; 
in ``From Particle Systems to Partial Differential Equations'', C. Bernardin and Patr\'\i cia Gon\c calves eds., pp. 3--91, 
Springer Proc. in Math. and Satist. 75, Springer Verlag, Berlin, Heidelberg, 2014.

\bibitem{GL2002}
F. Golse and C.D. Levermore,
The StokesÐFourier and acoustic limits for the Boltzmann equation, 
\textit{Comm. Pure Appl. Math.} \textbf{55} (2002), 336--393.

\bibitem{GSR2004}
F. Golse and L. Saint-Raymond,
The NavierÐStokes limit of the Boltzmann equation for bounded collision kernels, 
\textit{Invent. Math.} \textbf{155} (2004), 81--161.

\bibitem{GSR2009}
F. Golse and L. Saint-Raymond,
The incompressible Navier-Stokes limit of the Boltzmann equation for hard cutoff potentials, 
\textit{J. Math. Pures Appl.} \textbf{91} (2009), 508--552.

\bibitem{GuiraudICM}
J.-P. Guiraud,
Probl\`eme aux limites int\'erieures pour lÕ\'equation de Boltzmann, 
in ``Actes du Congr\`es International des Math\'ematiciens'', Nice, 1970, 
Vol. 3, Gauthier-Villars, Paris (1971), 115--122.

\bibitem{Guiraud1}
J.-P. Guiraud,
Probl\`eme aux limites int\'erieur pour lÕ\'equation de Boltzmann lin\'eaire, 
\textit{J. M\'ecanique} \textbf{9} (1970), 443--490.

\bibitem{Guiraud2}
J.-P. Guiraud,
Probl\`eme aux limites int\'erieur pour lÕ\'equation de Boltzmann en r\'egime stationnaire, faiblement non lin\'eaire, 
\textit{J. M\'ecanique} \textbf{11} (1972), 183--231.

\bibitem{Hilbert1912}
D. Hilbert,
Begr\"undung der kinetischen Gastheorie, 
\textit{Math. Ann.} \textbf{72} (1912), 562--577.

\bibitem{IllnerShin}
R. Illner and M. Shinbrot,
The Boltzmann equation: Global existence for a rare gas in an infinite vacuum,
\textit{Comm. Math. Phys.} \textbf{95} (1984), 217--226.

\bibitem{KanielShin}
S. Kaniel and M. Shinbrot,
The Boltzmann Equation: Uniqueness and Local Existence, 
\textit{Commun. Math. Phys.} \textbf{58} (1978), 65--84.

\bibitem{Leray34}
J. Leray, Essai sur le mouvement dÕun liquide visqueux emplissant lÕespace, 
\textit{Acta Math.} \textbf{63} (1934), 193--248.

\bibitem{LvrmrMasmou}
C.D. Levermore and N. Masmoudi,
From the Boltzmann equation to an incompressible Navier-Stokes-Fourier system, 
\textit{Arch. Ration. Mech. Anal.} \textbf{196} (2010), 753--809.

\bibitem{PLLKyoto2}
P.-L. Lions, 
Compactness in BoltzmannÕs equation via Fourier integral operators and applications II. 
\textit{J. Math. Kyoto Univ.} \textbf{34} (1994), 429-461.

\bibitem{PLLMasmoudi1}
P.-L. Lions and N. Masmoudi,
From Boltzmann equation to the NavierÐStokes and Euler equations I, 
\textit{Arch. Ration. Mech. Anal.} \textbf{158} (2001), 173--193.

\bibitem{PLLMasmoudi2}
P.-L. Lions and N. Masmoudi,
From Boltzmann equation to the NavierÐStokes and Euler equations II,
\textit{Arch. Ration. Mech. Anal.} \textbf{158} (2001), 195--211.

\bibitem{Liu78}
T.-P. Liu, 
Solutions in the large for the equations of nonisentropic gas dynamics, 
\textit{Indiana Univ. Math. J.} \textbf{26} (1977), 147--177.

\bibitem{LiuYu04}
T.-P. Liu and S.-H. Yu, 
Boltzmann equation: Micro-macro decompositions and positivity of shock profiles,
\textit{Comm. Math. Phys.} \textbf{246} (2004), 133--179.

\bibitem{MasmouLSR}
N. Masmoudi and L. Saint-Raymond,
From the Boltzmann equation to the Stokes-Fourier system in a bounded domain, 
\textit{Comm. Pure Appl. Math.} \textbf{56} (2003), 1263--1293.

\bibitem{Mischler}
S. Mischler,
Kinetic equations with Maxwell boundary conditions,
\textit{Ann. Scient. Ecole Norm. Sup.} \textbf{43} (2010), 719--760.

\bibitem{Nishida78}
T. Nishida,
Fluid dynamical limit of the nonlinear Boltzmann equation to the level of the compressible Euler equation, 
\textit{Comm. Math. Phys.} \textbf{61} (1978), 119--148.

\bibitem{LSRBoltzEuler}
L. Saint-Raymond,
Convergence of solutions to the Boltzmann equation in the incompressible Euler limit, 
\textit{Arch. Ration. Mech. Anal.} \textbf{166} (2003), 47--80.

\bibitem{SRBook}
L. Saint-Raymond,
``Hydrodynamic Limits of the Boltzmann Equation'',
Lect. Notes in Math. 1971, Springer-Verlag, Berlin-Heidelberg, 2009.

\bibitem{Sone71}
Y. Sone, 
Asymptotic theory of flow of rarefied gas over a smooth boundary II, in ``Rarefied Gas Dynamics'', edited by D. Dini 
(Editrice Tecnico Scientifica, Pisa, 1971), Vol. 2, 737--749.

\bibitem{SoneBook1}
Y. Sone,
``Kinetic Theory and Fluid Dynamics'', 
Birkh\" auser, Boston (2002).

\bibitem{SoneBook2}
Y. Sone,
``Molecular Gas Dynamics: Theory, Techniques and Applications'',
Birkh\" auser, Boston (2007).

\bibitem{SoneAoki87}
Y. Sone and K. Aoki, 
Steady gas flows past bodies at small Knudsen numbers --- Boltzmann and hydrodynamic systems, 
\textit{Transp. Theory Stat. Phys.} \textbf{16}Ê(1987), 189--199.

\bibitem{SoneDoi}
Y. Sone and T. Doi,
Analytical study of bifurcation of a flow of a gas between coaxial circular cylinders with evaporation and condensation,
\textit{Phys. Fluids}, \textbf{12} (2000), 2639--2660.

\bibitem{Temam}
R. Temam,
``Navier-Stokes equations'',
North-Holland Publ. Company, 1979.

\bibitem{Ukai1974}
S. Ukai,
On the existence of global solutions of a mixed problem for the nonlinear Boltzman equation,
\textit{Proc. Japan Acad.} \textbf{50} (1974), 179--184.

\bibitem{UkaiAsano1}
S. Ukai and K. Asano, 
Steady solutions of the Boltzmann equation for a gas flow past an obstacle. I. Existence,
\textit{Arch. Rational Mech. Anal.} \textbf{84} (1983), 249-291. 

\bibitem{UkaiAsano2}
S. Ukai and K. Asano, 
Steady solutions of the Boltzmann equation for a gas flow past an obstacle. II. Stability,
\textit{Publ. Res. Inst. Math. Sci.} \textbf{22} (1986), 1035--1062. 

\bibitem{VillaniBourbaki}
C. Villani,
Limites hydrodynamiques de lÕ\'equation de Boltzmann,
S\'eminaire Bourbaki, 2000-2001, exp. no. 893, 364--405.

\bibitem{Welte}
W. Velte,
Stabilit\"at und Verzweigung station\"arer L\"osungen der Navier--Stokesschen Gleichungen beim Taylor Problem, 
\textit{Arch. Rational Mech. Anal.}, \textbf{22}, 1966, 1--14.

\end{thebibliography}
\end{document}